\documentclass[twoside,11pt]{article}

\usepackage{blindtext}

\usepackage{jmlr2e_arxiv}
\usepackage{amsmath}
\newtheorem{prop}[theorem]{Proposition}

\newtheorem{assum}[theorem]{Assumption}
\newtheorem{alg}[theorem]{Algorithm}

\usepackage{numprint}
\npthousandsep{,}

\usepackage{pgfplots}
\usepackage{float}
\usepackage{alg}
\usepackage{enumitem}
\pgfplotsset{compat=1.15}

\newcommand{\sign}{\operatorname{sign}}
\newcommand{\argmax}{\operatorname{argmax}}

\newcommand{\diag}{\operatorname{diag}}

\newcommand{\PP}{\mathbb{P}}
\newcommand{\E}{\mathbb{E}}

\newcommand{\Cov}{\operatorname{Cov}}

\newcommand{\Var}{\mathrm{Var}}

\newcommand{\inprob}{\stackrel{p}{\rightarrow}}
\newcommand{\indist}{\stackrel{d}{\rightarrow}}

\newcommand{\avec}{\mathbf{a}}
\newcommand{\bvec}{\mathbf{b}}
\newcommand{\cvec}{\mathbf{c}}

\newcommand{\evec}{\mathbf{e}}

\newcommand{\hvec}{\mathbf{h}}

\newcommand{\svec}{\mathbf{s}}
\newcommand{\tvec}{\mathbf{t}}

\newcommand{\vvec}{\mathbf{v}}
\newcommand{\wvec}{\mathbf{w}}
\newcommand{\xvec}{\mathbf{x}}
\newcommand{\yvec}{\mathbf{y}}
\newcommand{\zvec}{\mathbf{z}}

\newcommand{\zerovec}{\mathbf{0}}
\newcommand{\onevec}{\mathbf{1}}

\newcommand{\Dmat}{\mathbf{D}}

\newcommand{\Imat}{\mathbf{I}}

\newcommand{\Mmat}{\mathbf{M}}

\newcommand{\Pmat}{\mathbf{P}}

\newcommand{\Rmat}{\mathbf{R}}

\newcommand{\Vmat}{\mathbf{V}}

\newcommand{\Xmat}{\mathbf{X}}

\newcommand{\calA}{\mathcal{A}}
\newcommand{\calB}{\mathcal{B}}

\newcommand{\calE}{\mathcal{E}}

\newcommand{\calL}{\mathcal{L}}

\newcommand{\calN}{\mathcal{N}}
\newcommand{\calO}{\mathcal{O}}

\newcommand{\bbeta}{\boldsymbol{\beta}}

\newcommand{\bpi}{\boldsymbol{\pi}}

\newcommand{\berror}{\boldsymbol{\varepsilon}}
\newcommand{\bmu}{\boldsymbol{\mu}}
\newcommand{\bSigma}{\boldsymbol{\Sigma}}

\newcommand{\LAR}{\operatorname{Lar}}

\usepackage{lastpage}
\firstpageno{1}

\begin{document}

\begin{center}
{\bf \Large New explanations and inference for least angle regression}
\end{center}

\vspace{6pt}

\begin{center}
\setlength{\tabcolsep}{.5in}
    \begin{tabular}{ll}
         {\bf Karl B. Gregory}& {\bf Daniel J. Nordman}  \\
         {\it Department of Statistics}& {\it Department of Statistics} \\
         {\it University of South Carolina} &{\it Iowa State University}\\
         {\it Columbia, SC 29225, USA}&{\it Ames, IA 50011, USA}
    \end{tabular}
\end{center}

\vspace{12pt}

\begin{abstract}%   <- trailing '%' for backward compatibility of .sty file
Efron et al.~(2004) introduced least angle regression (LAR)
as an algorithm for linear predictions, intended as an  alternative
to forward selection with connections to penalized regression.
However, LAR has remained somewhat of a ``black box,''    where some basic behavioral properties of LAR output are not well understood, including an appropriate termination point for the algorithm.
We   provide a novel framework for inference
with LAR, which also allows LAR to be understood from new perspectives with several newly developed mathematical properties.
The LAR algorithm at a data level can viewed as estimating a population counterpart ``path" that organizes a response mean  along  regressor variables which are ordered according to
a decreasing series of population ``correlation'' parameters; such parameters are shown to have meaningful interpretations for explaining variable contributions whereby zero correlations denote  unimportant variables.
In the output of  LAR,  estimates of all non-zero population correlations turn out to have independent
 normal distributions for use in inference, while estimates of zero-valued population correlations
have a certain non-normal joint distribution.  These properties help to provide a formal rule for stopping the LAR algorithm. While
 the standard bootstrap for regression can fail for
   LAR, a modified bootstrap provides
a practical and formally justified tool for interpreting the entrance  of variables and   quantifying uncertainty in estimation.
The LAR inference method  is studied through simulation and illustrated with data examples. 
\end{abstract}

\vspace{12pt }

\begin{keywords}
Bootstrap, equiangular vector, regression, step correlation, variable ordering
\end{keywords}

\section{Introduction}

 \cite{efron2004least} introduced the least angle regression (LAR) algorithm as an alternative approach to least squares regression with forward selection of variables.  The main idea is that, as the algorithm proceeds,
 new columns of a design matrix $\Xmat$  are admitted in a certain stepwise fashion, whereby linear predictions of a response vector $\yvec$ are then sequentially updated based on a set of so-called
   \textit{active} columns of $\Xmat$.    \cite{efron2004least} presented
   LAR  in connection to a suite of methods for obtaining a sparse estimator of  a vector of regression coefficients $\bbeta$ when the mean of $\yvec$ is equal to a linear combination $\Xmat \bbeta$.   Namely, because LAR admits columns of $\Xmat$ into an active set sequentially, this algorithm can
   be  stopped in stages, resulting in a type of sparse estimator of $\bbeta$ akin to the Lasso estimator or  the forward stagewise regression estimator \citep{tibshirani1996regression,tibshirani2015general}.  In fact, the LAR algorithm can be modified to return solutions of Lasso or
forward stagewise regression estimators \citep{hastie2007forward,tibshirani2013lasso,hesterberg2008least}, and this represents the context in which
LAR has received most attention. Perhaps due to these connections with sparse estimators of $\bbeta$, literature on inference with LAR has largely viewed it as a model selection algorithm, focusing either on post-selective inference conditionally on the current active set when LAR (or its Lasso modification) is stopped midway \citep{taylor2014post,lee2016exact} or on sequential inference, which aims to test, at each step, whether the current active set contains every important variable \citep{lockhart2014significance,su2018first,g2016sequential}.

In spite of these works,  however, our understanding of the LAR algorithm itself has not improved  much since the initial work of \cite{efron2004least},
and  texts have likewise noted that some main operating mechanics of LAR remain unclear (cf.~Ch.~4.1, \citealp{RV2025}). 
Furthermore, beyond the perspective of a data  algorithm, 
little has been known about inference on the quantities directly estimated by  LAR over the past 20+ years, relating to how to possibly understand the output of LAR toward estimation of some larger truth in regression.
For example, while LAR does indeed produce estimates of regression coefficients $\bbeta$,  the algorithm itself is not actually prescribed in terms of such coefficients and instead  outputs   variable orderings combined with  data-based ``correlations." 
  It turns out that new  
explanations and inference are possible in the LAR framework, but require a different perspective  and development for  better insight into what LAR  is mechanically doing and fundamentally estimating    at each step of the algorithm.   Namely, the output of the LAR algorithm applied to observed data $\yvec$ can be conceptualized as a noisy estimate  of the LAR algorithm
applied to the unobservable response mean $\bmu$. A population view of LAR is  introduced here and   
reveals that LAR aims simply to explain $\bmu$ as a sum of contributions after orthogonally re-writing the regressor variables (i.e., a Gram-Schmidt orthogonalization of columns of $\Xmat$), where the order of regressor variables  is determined by a series of  {\it step correlations}
as the algorithm proceeds.  Such correlations determine both  the sequence and contribution size of variables   toward  explaining the true signal  $\bmu$, and
step correlations decrease so that
only those variables with non-zero correlations are important for $\bmu$.
The LAR algorithm applied to data $\yvec$ can then be shown to estimate the population path of step correlations in a way that has useful interpretation and discernible properties for inference.
We are not aware of any work that similarly targets   such interpretation  and inference with the direct output of LAR.

 In Section \ref{sec:lar}, we present the details of the LAR sample algorithm as described in \cite{efron2004least}, offer a revision to one key step to better clarify how LAR determines variable entrances, and then explain the notion of a prototypical  LAR {\it population path}  (i.e., $\LAR(\Xmat,\bmu)$) as a target for inference.     Section~\ref{sec:understanding} reviews some known facts about LAR and then provides new  results that advance insight about how LAR decomposes a response   into a series of projections onto the column space of $\Xmat$ and how LAR chooses the order of variable entry.  Section \ref{sec:Tk} then presents two main distributional results regarding LAR estimates, which establish the consistency and limit distributions from a LAR sample path  (i.e., $\LAR(\Xmat,\yvec)$)  involving a noisy response $\yvec$.  A formal way to estimate the termination point of LAR  in practice is also introduced. 
Section~\ref{sec:practicalinference} gives methods for making inferences on the LAR population path based on our distributional results.  Specifically, we introduce a bootstrap procedure for constructing confidence intervals for the step correlations in the LAR population path as well as for the coefficients giving the contributions of each column of $\Xmat$ to the linear prediction   at each   path step.
The bootstrap is useful as  outputted LAR correlations from a sample can have complicated distributions to approximate (e.g., involving non-normal  mixture distributions).  Additionally, a naive implementation of bootstrap  from standard regression can provably  fail, while the proposed bootstrap is simple and theoretically valid. As numerical support, Section~\ref{sec:sim} presents a simulation study, while Section~\ref{sec:illu} presents real data analyses to illustrate LAR inference.   Section~\ref{sec:midpathties} discusses some further technical details regarding LAR, in particular the notion of ``midpath ties.''     
Section~\ref{sec:discussion} then offers concluding discussion.  Proofs of the main results appear in the Appendix.

For clarity in studying LAR and establishing estimation results, we use the same   theoretical framework  as in
\cite{efron2004least}'s introduction of LAR, 
focusing primarily on the $p < n$ case, where $p$ is the number of columns in $\Xmat$ and $n$ the number of rows.  
This  setting provides a non-trivial starting point for explaining mechanics and for developing statistical inference with LAR.  Additionally, we formally establish  bootstrap   to help quantify uncertainty with LAR estimates, which is a challenging task.  Even for the more commonly studied Lasso method in regression, which has the benefit of not involving a purely algorithmic description like LAR,  bootstrap theory  currently exists only for the $p<n$ case \citep{chatterjee2011bootstrapping,giurcanu2019bootstrapping}. Our work, though, can provide   groundwork for  developments in the $p > n$ setting and for  future investigations with LAR, including   modifications.

% Throughout the paper, we will assume that we observe $\yvec = \bmu + \berror$, where $\berror$ represents some noise, as well as an $n \times p$ design matrix $\Xmat = [\xvec_1,\dots,\xvec_p]$ with $\|\xvec_j\|_2 = 1$ for all $j =1,\dots,p$.  We assume $p < n$ throughout.

%\subsection{Notation} For a $d\times d$ symmetric matrix $\Vmat$ and sets of indices $\calA,\calB \subset\{1,\dots,d\}$, we denote by $\Vmat_{\calA,\calB}$ the matrix %constructed by keeping rows and columns of $\Vmat$ having indices in $\calA$ and $\calB$, respectively. We set $\Vmat_{\calA|\calB} = \Vmat_{\calA,\calA} - %\Vmat_{\calA,\calB}(\Vmat_{\calB,\calB})^{-1}\Vmat_{\calB,\calA}$.
%If $\vvec$ is a column vector, we denote by $(\vvec)_j$ entry $j$ of $\vvec$ and by $(\vvec)_\calA$ the vector formed by keeping the entries in $\vvec$ with indices in %$\calA$. We denote by $\|\vvec\|^2$ the sum of the squared entries in $\vvec$. We denote by $\Pmat_{\Vmat}$ the orthogonal projection onto the column space of the %matrix $\Vmat$. We write the expectation and variance operators as $\E$ and $\V$, respectively. We will denote by $\calN(\vvec,\Vmat)$ the multivariate Normal %distribution with mean vector $\vvec$ and covariance matrix $\Vmat$ and by $\chi^2_\nu$ the chi-squared distribution with degrees of freedom $\nu$. We use $\Imat$, %$\zerovec$, and $\onevec$ to denote an identity matrix, a column vector of zeroes, and a column vector of ones, respectively, of appropriate dimension.

\section{The least angle regression (LAR) algorithm}
\label{sec:lar}
For context, we first give an overview of the original LAR algorithm of \cite{efron2004least} as described for
an $n\times 1$ response vector $\yvec$ and an $n \times p$ matrix $\Xmat$ with columns denoting $p<n$ linearly independent regressor variables.
Typically, the columns of $\Xmat$ are scaled to have unit norm, which we assume here too.
The first steps of the LAR algorithm are roughly as follows.  With an initialization  $\hat \yvec_0 \equiv 0$,
compute a vector of so-called ``correlations'' (technically dot-products here)  as $\hat \cvec_1 \equiv \Xmat^T(\yvec - \hat \yvec_0)$ and find the column (say $\xvec_1$) of $\Xmat$ with the greatest absolute correlation, say $\hat{C}_1$.  Define the 1st  \textit{active set} $\hat \calA_1$ to be the index of this $\Xmat$-column
and let $\hat \avec_1$
denote $\xvec_1$ upon signing this to have positive correlation  with the residual $\yvec - \hat \yvec_0$.  Next, create an updated prediction
$\hat \yvec_1 \equiv \hat \yvec_0 + \hat{\gamma}_1 \hat \avec_1$ by moving from $\hat \yvec_0$ in the direction of $\hat \avec_1$ by the smallest positive amount, denoted by $\hat{\gamma}_1>0$, whereby another column of $\Xmat$ (say $ \xvec_2$) has a correlation with the updated residual $\yvec - \hat \yvec_1$ which matches that of $\xvec_1$ with $\yvec - \hat \yvec_1$ in absolute value, say $\hat{C}_2$.
Then, the 2nd step active set $\hat{\mathcal{A}}_2$ corresponds to column indices of $\Xmat$ prescribing
$\xvec_1,\xvec_2$ and we define   $\hat \avec_2$ to be a (unit) vector making equal angles with $\xvec_1,\xvec_2$ (after signing these to have positive correlation with $\yvec - \hat \yvec_1$).    An updated prediction
$\hat \yvec_2 \equiv \hat \yvec_1 + \hat{\gamma}_2 \hat \avec_2$ follows by shifting $\hat \avec_2$ by the smallest positive $\hat{\gamma}_2>0$, whereby a further column of $\Xmat$ (say $  \xvec_3$) has a correlation with $\yvec - \hat \yvec_2$ equaling that of $\xvec_1$ (or $\xvec_2$) with $\yvec - \hat \yvec_1$ in absolute value, say $\hat{C}_3$.  The 3rd step active set $\hat{\mathcal{A}}_3$  corresponds to the column indices prescribing $\xvec_1,\xvec_2,\xvec_3$.  In this fashion, the algorithm proceeds by updating predictions stepwise in a direction equi-angular to all currently ``active'' regression variables until another (i.e., non-active) variable becomes active through a matching residual correlation.  The sequence of correlations $\hat C_k$ also decreases to zero over increasing steps.

Algorithm \ref{alg:lar_efron} gives the complete details of the LAR algorithm as presented in \cite{efron2004least}.
For a generic vector $\zvec$, let $(\zvec)_j$ denote its $j$th component in the following.

\begin{alg}[Data-level Least Angle Regression, $\LAR(\Xmat,\yvec)$, \cite{efron2004least}]\label{alg:lar_efron}
Given an $n \times p$ matrix $\Xmat$ with linearly independent columns having unit norm and an $n \times 1$ response vector $\yvec$, initialize $\hat \yvec_0 \equiv \zerovec$. Then update predictions from $\hat \yvec_{k-1}$ to $\hat \yvec_{k}$ as follows:
\begin{enumerate}
    \item Define $\hat C_k \equiv \max\{|(\hat \cvec_k)_j|\}$, where $\hat \cvec_k \equiv \Xmat ^T(\yvec - \hat \yvec_{k-1})$.
    \item If $\hat C_k =0$ then set $\hat \yvec_k  = \hat \yvec_{k-1}$ and stop; otherwise continue.
    \item Define the active set $\hat \calA_k \equiv \{j : |(\hat \cvec_k)_j| = \hat C_k\}$.
    % \item $\Xmat_k = \Xmat_{\calA_k}\diag(\sign((\cvec_k)_{\calA_k}))$
    \item Define $\hat \Xmat_k \equiv [\sign((\hat \cvec_k)_j) \xvec_j , ~ j \in \hat \calA_k]$ from signed active columns of $\Xmat$.
    % \Xmat_{\calA_k}\diag((\svec_k)_{\calA_k})$, where $\svec_k = \sign(\cvec_k)$.
    \item Define equi-angular vector $\hat \avec_k \equiv \hat A_k \hat \Xmat_k (\hat \Xmat_k^T\hat \Xmat_k)^{-1}\onevec$ with angle $\hat A_k^{-2} \equiv \onevec^T (\hat \Xmat_k^T\hat \Xmat_k)^{-1}\onevec$, where $\onevec$ denotes a vector of ones.
    \item If $|\hat \calA_k| < p$, compute $\hat \wvec_k \equiv \Xmat^T\hat \avec_k$ and set
    \begin{equation}
    \label{eqn:efrongamma}
    \hat \gamma_k \equiv \underset{j \notin \hat \calA_k}{\operatorname{min}^{+}} \Big\{\frac{\hat C_k - (\hat \cvec_k)_j}{\hat A_k - (\hat \wvec_k)_j},\frac{\hat C_k + (\hat \cvec_k)_j}{\hat A_k + (\hat \wvec_k)_j}\Big\},
    \end{equation}
    where  $\min^{+}\{\cdot,\cdot\}$ in  \eqref{eqn:efrongamma} denotes a minimum taken only over positive arguments.\\
    % \begin{equation}
    % \label{eqn:gammak}
    % \gamma_k = \min_{j \in \calA_k^C}\Big\{\frac{C_k - (\cvec_k)_jr_{kj} }{A_k - (\wvec_k)_jr_{kj} }\Big\}, \quad r_{kj} = \sign \Big((\cvec_k)_j - \frac{C_k}{A_k} (\wvec_k)_j \Big ).
    % \end{equation}
    Otherwise, if $|\hat \calA_k| = p$, set $\hat \gamma_k \equiv \hat C_k / \hat A_k$.
    \item Update $\hat \yvec_k \equiv \hat \yvec_{k-1} + \hat \gamma_k \hat \avec_k$.
\end{enumerate}
\end{alg}

The values $\hat C_k$ above, which we will call \textit{step correlations}, represent the maximal absolute dot-products between the columns of $\Xmat$ and the residual vectors $\hat \yvec - \hat \yvec_{k-1}$ at stage $k \leq p$ in the algorithm.  Strictly speaking,
the  $\hat C_k$ are not quite correlations, as these are not scaled to lie in the interval $[-1,1]$, but we still refer to these as correlations in the tradition of \cite{efron2004least}.  Such  correlations determine which columns of $\Xmat$ enter the \textit{active set} $\hat \calA_k$ on each step of the LAR algorithm, where the active set again denotes the particular (signed) columns $\hat \Xmat_k$  of $\Xmat$ contributing to an updated predictor $\hat \yvec_k$.   This updating involves computation of  a \textit{weight factor} $ \hat \gamma_k>0$ as well as
a so-called \textit{equi-angular vector} $\hat \avec_k$, where  $\hat \avec_k$ has unit-norm and makes equal angles $\hat \Xmat_k^T\hat \avec_k = \hat A_k \onevec$ with every column of $\hat \Xmat_k$     at step $k$.    We will refer to the terms $\hat A_k$ as \textit{angles} in the fashion of \cite{efron2004least}, though each $\hat A_k$ is technically the cosine of the angle between $\hat \avec_k$ and any column of $\hat \Xmat_k$.   If all variables become active $|\hat \calA_k| = p$, LAR is designed to return
the standard least squares estimator of $\yvec$ (cf.~\cite{efron2004least}).  Note that we assume $p < n$, though
one can replace  $p$ with $\min\{p,n-1\}$ in line 6 of Algorithm \ref{alg:lar_efron} to accommodate the possibility of $p > n$, where ``$n-1$'' comes from the assumption that $\yvec$ as well as the columns of $\Xmat$ have been centered to have mean zero (whereupon only $n -1$ columns of $\Xmat$ are needed to reconstruct $\yvec$). Such centering, though conventional, is not necessary to our results.
% (we assume $p < n$, though \cite{efron2004least} replaces $p$ with $n-1$ in line 6 of Algorithm \ref{alg:lar_efron} in the $p>n$ case, where the $n - 1$ ). We remark that one often centers $\yvec$ and the columns of $\Xmat$ to have mean zero, but this is not necessary to our results.  
Other related descriptions of the LAR algorithm   appear in \cite{khan2007robust} and \cite{taylor2014post}.

As a first result, we make a revision to the standard computation of weight $\hat \gamma_k$ from \eqref{eqn:efrongamma} in the algorithm which will later be useful in proving our results. In particular, the updated expression for $\hat \gamma_k$   clarifies exactly how variables must be signed to enter LAR active sets.

% Note that $\hat \gamma_k$ is the distance we can move $\hat \yvec_{k-1}$ in the direction of $\hat \avec_k$ until a column of $\Xmat$ not in the active set has an absolute correlation with the residual equal to or greater than that of any column of $\Xmat$ in the active set.

\begin{lemma}
\label{lem:gammak}
A definition of $\hat \gamma_k$ equivalent to \eqref{eqn:efrongamma} is $\hat \gamma_k = \min_{j \notin \calA_k} \{\hat \gamma_{k,j}\}$, where
    \begin{equation}
    \label{eqn:hatgammak}
    \hat  \gamma_{k,j} \equiv \frac{\hat C_k - (\hat \cvec_k)_j \hat r_{k,j} }{ \hat A_k - (\hat \wvec_k)_j \hat r_{k,j} }>0, \quad \hat r_{k,j} \equiv \sign \Big((\hat \cvec_k)_j - \frac{\hat C_k}{\hat A_k} (\hat \wvec_k)_j \Big).
    \end{equation}
\end{lemma}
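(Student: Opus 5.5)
Fix a step $k$ with $|\hat\calA_k|<p$ and write $c_j=(\hat\cvec_k)_j$, $w_j=(\hat\wvec_k)_j$, $C=\hat C_k$, $A=\hat A_k$. The plan is to compare the two candidate ratios appearing in \eqref{eqn:efrongamma} for each inactive $j$. The first step is to record the basic inequalities $|c_j|<C$ for $j\notin\hat\calA_k$ (by the definition of the active set) and $|w_j|\le 1$. We also need $|w_j|\le A$ whenever the denominators could vanish; for this, observe that $A>0$ and that the correlation of column $j$ with the residual along the path, $c_j-\gamma w_j$, is affine in $\gamma$, while the active correlations all equal $C-\gamma A$. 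Thus $\gamma_j^{\pm}=(C\mp c_j)/(A\mp w_j)$ are exactly the values of $\gamma$ at which $c_j-\gamma w_j=\pm(C-\gamma A)$.

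The key step is to show the following for each fixed $j\notin\hat\calA_k$. Both lines $C-\gamma A$ and $-(C-\gamma A)$ start at $\pm C$ at $\gamma=0$, where $c_j$ lies strictly between them. They meet each other at $\gamma^*=C/A$. Since $c_j-\gamma w_j$ is a line starting strictly inside the wedge $\{|x|<C-\gamma A\}$, which closes at $\gamma^*$, it must exit the wedge at some $\gamma\in(0,\gamma^*]$. That exit is the smallest positive crossing among $\gamma_j^+,\gamma_j^-$.

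Which boundary is hit is decided by the value of the line at the apex. At $\gamma^*$ the line equals $c_j-(C/A)w_j$. If this is positive, the line sits above the lower boundary throughout $[0,\gamma^*]$ and exits through the upper boundary $C-\gamma A$, giving crossing parameter $\gamma_j^+$, which corresponds to $r=+1$. If it is negative, the exit is through the lower boundary, giving $\gamma_j^-$ and $r=-1$. In either case the exit value is $\hat\gamma_{k,j}=(C-c_j\hat r_{k,j})/(A-w_j\hat r_{k,j})$, and it lies in $(0,\gamma^*]$, so it is positive.

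It remains to check that the other candidate, which crosses the other boundary, is either nonpositive, undefined, or at least $\hat\gamma_{k,j}$. A line meets each boundary line at most once. Any crossing of the other boundary at positive $\gamma$ must therefore occur after the exit, since before the exit the line is strictly inside the wedge. Hence the $\min^+$ over the pair equals $\hat\gamma_{k,j}$, and taking the minimum over $j$ gives the claim.

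The main obstacle is the degenerate cases. The first is $c_j-(C/A)w_j=0$, where the sign is $0$: the line passes through the apex, and then both ratios equal $C/A$ (provided $A\ne \pm w_j$), so any convention for $\hat r_{k,j}$ works. The second is a vanishing denominator $A\mp w_j=0$, where the line is parallel to a boundary and never crosses it, consistent with that term being excluded from $\min^+$. I would handle these by the geometric wedge argument above, which avoids dividing by zero, and then verify positivity of the numerator and denominator in \eqref{eqn:hatgammak} directly. Taking $r=\hat r_{k,j}$, we have $C-c_jr>0$ since $|c_j|<C$, and the sign condition gives $r(c_j-(C/A)w_j)>0$, which implies $A-w_jr>(A/C)(C-c_jr)>0$.
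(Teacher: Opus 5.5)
Your proof is correct and is essentially the paper's argument. Both reduce to the wedge $-(\hat C_k-\gamma\hat A_k)\le(\hat\cvec_k)_j-\gamma(\hat\wvec_k)_j\le \hat C_k-\gamma\hat A_k$ on $(0,\hat C_k/\hat A_k]$, read off the exit boundary from the sign of the line at the apex, and get $\hat C_k/\hat A_k$ when $\hat r_{k,j}=0$. You are more explicit than the paper in checking that the other candidate in Efron's $\min^{+}$ pair is nonpositive, undefined, or no smaller, whereas the paper characterizes $\hat\gamma_k$ as the largest admissible $\gamma$. One aside should be dropped: $|(\hat\wvec_k)_j|\le\hat A_k$ is false in general for inactive $j$, since only the signed bound $\hat A_k>\hat r_{k,j}(\hat\wvec_k)_j$ holds, but your argument never uses it.
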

% \rd{We note that an expression similar to \eqref{eqn:hatgammak} appears in \cite{tibshirani2013lasso} in the context of the modification to the LAR algorithm which returns the Lasso solution path.}

We will be interested in studying how the data-level version of $\LAR(\Xmat,\yvec)$, as outlined  in Algorithm~\ref{alg:lar_efron} based on a
 design matrix $\Xmat$ and a sample response vector $\yvec$, has a {\it population-level} counterpart of LAR  as applied to a design matrix $\Xmat$ and a mean response vector $\bmu$.
  Algorithm \ref{alg:lar} describes this population rendition of LAR (say $\LAR(\Xmat,\bmu)$), which aims to re-write
  the population mean response $\bmu$ through a series of step-wise linear updates (say $\bmu_k$) from regressors in the design matrix $\Xmat$;
  this algorithm likewise implements
     an analog of \eqref{eqn:hatgammak}
   for simplifying population-level expressions of weighting factors.

\begin{alg}[Population-Level Least Angle Regression, $\LAR(\Xmat,\bmu)$]\label{alg:lar}
Given an $n \times p$ matrix $\Xmat$ with columns having unit norm and an $n \times 1$ response vector $\bmu$, initialize $\bmu_0 \equiv \zerovec$. Then update approximations from $\bmu_{k-1}$ to $\bmu_{k}$ as follows:
\begin{enumerate}
    \item Define $C_k \equiv \max\{|(\cvec_k)_j|\}$, where $\cvec_k  \equiv \Xmat ^T(\bmu - \bmu_{k-1})$.
    \item If $ C_k =0$ then set $\bmu_k  = \bmu_{k-1}$ and stop; otherwise continue.

    \item Define the active set  $\calA_k  \equiv \{j : |(\cvec_k)_j| = C_k\}$.
    \item Define $\Xmat_k \equiv [\sign((\cvec_k)_j) \xvec_j , ~ j \in  \calA_k]$ from signed active columns of $\Xmat$.

    \item Define equi-angular vector $\avec_k \equiv A_k \Xmat_k (\Xmat_k^T\Xmat_k)^{-1}\onevec$ with angle $A_k^{-2} \equiv \onevec^T (\Xmat_k^T\Xmat_k)^{-1}\onevec$, where $\onevec$ denotes a vector of ones.
    \item If $|\calA_k| < p$, compute $\wvec_k  \equiv \Xmat^T\avec_k$ and set $\gamma_k \equiv \min_{j \notin \calA_k} \{\gamma_{k,j}\}$, where
    \begin{equation}
    \label{eqn:gammak}
    \gamma_{k,j} = \frac{C_k - (\cvec_k)_jr_{k,j} }{A_k - (\wvec_k)_jr_{k,j} }>0, \quad r_{k,j} \equiv \sign \Big((\cvec_k)_j - \frac{C_k}{A_k} (\wvec_k)_j \Big ).
    \end{equation}
    Otherwise, if $|\calA_k| = p$, set $\gamma_k \equiv C_k / A_k$.
    \item Update $\bmu_k = \bmu_{k-1} + \gamma_k \avec_k$.
\end{enumerate}
\end{alg}

 Algorithm \ref{alg:lar}  matches the original formulation of LAR described in Algorithm \ref{alg:lar_efron} when applied to a response mean $\bmu$ in place of the response vector $\yvec$.  For simplicity, we will refer to  the output of  $\LAR(\Xmat,\bmu)$ as the \textit{population path} and to that of $\LAR(\Xmat,\yvec)$ as the \textit{sample path}.  To  develop some properties regarding the LAR population path in Section~\ref{sec:understanding},  it will be useful to first define
how one might expect  output from  $\LAR(\Xmat,\bmu)$ to standardly look.

\noindent
\begin{definition}
\label{def:1}  A so-called {\it prototypical population path}   involves a sequence of columns of $\Xmat$, say $j_1,\dots,j_m$, that  enter the active set
of $\LAR(\Xmat,\bmu)$ in a one-by-one consecutive fashion on  steps $1,\dots,m$, respectively, for some $m \leq p$, with  corresponding step correlations such that $C_1>\ldots>C_m> C_{m+1}=0$ holds in Algorithm~\ref{alg:lar}.\end{definition}

Note that,  while the   LAR data  Algorithm~\ref{alg:lar_efron} and its population counterpart in Algorithm~\ref{alg:lar}   potentially allow for multiple regressor variables to enter on a single step,  this situation can be somewhat pathological as a starting point  and, in fact, the original LAR algorithm is arguably not intended for such cases;  we explain this further in Section~\ref{sec:midpathties}.  
Instead, Definition~\ref{def:1} simply outlines a common or plausible outcome for population path $\LAR(\Xmat,\bmu)$, where only one variable enters on each algorithmic step of Algorithm~\ref{alg:lar}, and this feature
matches how the data-based LAR sample path typically proceeds in Algorithm~\ref{alg:lar_efron} (i.e., with probability 1 for continuous response).  However,  while the LAR sample path $\LAR(\Xmat,\yvec)$ standardly ends only after including all $p$ variables, this aspect may {\it not} hold in the LAR population path, as reflected in that
   Definition~\ref{def:1} allows    $\LAR(\Xmat,\bmu)$ to terminate at $m$ steps which may involve fewer variables than the number $p$ of columns in $\Xmat$.
When $m<p$ holds in Definition~\ref{def:1}, it follows necessarily that $C_{m+1}=0$ at step $(m+1)$ and  this feature essentially implies that only $m$ (and not all $p$)
variables are needed to explain  the response mean $\bmu$; see also Lemma~\ref{lem:muk} to follow.

\section{Understanding least angle regression}
\label{sec:understanding}

This section presents some new findings in order to provide deeper insights about how LAR operates
 as well as to set the stage for  inference with LAR, which will be the focus of Sections~\ref{sec:Tk} and \ref{sec:practicalinference}.  We present results here for a prototypical population path $\LAR(\Xmat,\bmu)$  (i.e.,~Definition~\ref{def:1}),
though analogous findings may be expressed for the sample path $\LAR(\Xmat,\yvec)$ (cf.~Remark~\ref{rem:1}).
For context in framing the new results to follow, Proposition~\ref{prop:LARknown} first reviews some known properties of LAR  as established by \cite{efron2004least} (though the latter technically
considers sample paths, not population paths).   In the following, let $\Pmat_k$ denote the orthogonal projection onto the column space of $\Xmat_{k}$, where the latter denotes the matrix of (signed) regressor variables in the active set at step $k$ of $\LAR(\Xmat,\bmu)$.

\begin{prop}
\label{prop:LARknown}
 For each step $k$  in Algorithm~\ref{alg:lar}, it holds  that
 (i)    $j \in \calA_{k}$ implies $j \in \calA_{k+1}$;
 (ii)  $\Xmat_k^T(\bmu - \bmu_{k-1})  = C_k \onevec$, (iii) $C_{k+1}  = C_k - \gamma_k A_k$; and (iv)
\begin{equation}
\label{eqn:Pkmu}
\Pmat_k\bmu = \bmu_{k-1} + \frac{C_k}{A_k}\avec_k.
\end{equation}
 \end{prop}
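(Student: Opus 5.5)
The plan is to argue by induction on $k$, establishing (ii) first at each step. Parts (iii), (i) and (iv) then follow from (ii) together with the definitions in Algorithm~\ref{alg:lar}.

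\emph{Part (ii).} Fix $k$. By the definitions of $\calA_k$ and $\Xmat_k$, the signed active columns satisfy $\sign((\cvec_k)_j)\,\xvec_j^T(\bmu-\bmu_{k-1}) = |(\cvec_k)_j| = C_k$ for $j\in\calA_k$. Hence $\Xmat_k^T(\bmu-\bmu_{k-1}) = C_k\onevec$ holds directly, with no induction needed.

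\emph{Part (iii).} We use the equiangular identity $\Xmat_k^T\avec_k = A_k\Xmat_k^T\Xmat_k(\Xmat_k^T\Xmat_k)^{-1}\onevec = A_k\onevec$. Then for $j\in\calA_k$ the signed residual correlation after the update is
\[
\sign((\cvec_k)_j)\,\xvec_j^T(\bmu-\bmu_k) = C_k-\gamma_kA_k .
\]

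For $j\notin\calA_k$, write the residual correlation along the move $\gamma\in[0,\gamma_k]$ as $(\cvec_k)_j-\gamma(\wvec_k)_j$. Lemma~\ref{lem:gammak}, in its population form \eqref{eqn:gammak}, says that $\gamma_{k,j}$ is exactly the first $\gamma>0$ at which $|(\cvec_k)_j-\gamma(\wvec_k)_j|$ reaches $C_k-\gamma A_k$. This uses $|(\cvec_k)_j|<C_k$ at $\gamma=0$ and linearity: the function $C_k-\gamma A_k \mp ((\cvec_k)_j-\gamma(\wvec_k)_j)$ is linear and starts positive, and the sign $r_{k,j}$ selects the branch that is hit first. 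Therefore, for $\gamma\le\gamma_k=\min_j\gamma_{k,j}$, all inactive absolute correlations stay $\le C_k-\gamma A_k$, with equality at $\gamma_k$ for the minimizing index.

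The main obstacle is making sure that $C_k-\gamma_kA_k\ge 0$. I would handle it by noting $\gamma_{k,j}\le C_k/A_k$, since at $\gamma=C_k/A_k$ the active correlation is $0\le$ the inactive absolute value. It follows that $C_{k+1}=\max_j|(\cvec_{k+1})_j| = C_k-\gamma_kA_k$. In the terminal case $|\calA_k|=p$, we have $\gamma_k=C_k/A_k$, giving $C_{k+1}=0$.

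\emph{Part (i).} This is read off from the same computation. For each $j\in\calA_k$, $|(\cvec_{k+1})_j| = C_k-\gamma_kA_k = C_{k+1}$, so $j\in\calA_{k+1}$. The sign is also preserved when $C_{k+1}>0$, so $\Xmat_k$'s columns reappear with the same signs in $\Xmat_{k+1}$.

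\emph{Part (iv).} We must show $\Pmat_k(\bmu-\bmu_{k-1}) = (C_k/A_k)\avec_k$ and $\Pmat_k\bmu_{k-1}=\bmu_{k-1}$.

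For the first identity, $\avec_k$ lies in the column space of $\Xmat_k$. Moreover, $\Xmat_k^T(\bmu-\bmu_{k-1}-(C_k/A_k)\avec_k) = C_k\onevec - C_k\onevec = \zerovec$ by (ii) and the identity $\Xmat_k^T\avec_k = A_k\onevec$. So $(C_k/A_k)\avec_k$ is the orthogonal projection of $\bmu-\bmu_{k-1}$ onto that column space.

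For the second identity, $\bmu_{k-1}=\sum_{i<k}\gamma_i\avec_i$, and each $\avec_i$ lies in the span of $\Xmat_i\subseteq$ span of $\Xmat_k$ by (i), applied inductively.
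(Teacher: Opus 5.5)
Your proposal is correct and follows essentially the same route as the paper's proof, in the same order: (ii) directly from the definitions; (iii) from the Lemma~\ref{lem:gammak} characterization of $\gamma_{k,j}$ as the point in $(0,C_k/A_k]$ where $|(\cvec_k)_j-\gamma(\wvec_k)_j|$ meets $C_k-\gamma A_k$; (i) from $\sign((\cvec_k)_j)\,\xvec_j^T(\bmu-\bmu_k)=C_k-\gamma_kA_k$ for active $j$; and (iv) from orthogonality of $\bmu-\bmu_{k-1}-(C_k/A_k)\avec_k$ to the active columns. The one difference is in (iii): the paper does a case split on $r_{k,j}$ for the entering index, which also gives the sign identity $\sign((\cvec_{k+1})_j)=r_{k,j}$ that it reuses later, whereas you bound all coordinates at once and state the nonnegativity $C_k-\gamma_kA_k\ge 0$ explicitly.
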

 Proposition~\ref{prop:LARknown}(i) says that a column of $\Xmat$ in the active set at step $k$ will stay in   active sets for remaining LAR steps,
  which comports  with the structure of population paths in Definition~\ref{def:1}.    Proposition~\ref{prop:LARknown}(ii) re-states that all signed columns of $\Xmat$  in the $k$th active set (given by $\Xmat_k$) must have  a common positive correlation with the $k-1$ step residual $\bmu-\bmu_{k-1}$; the signs of active variables will also not change over steps (cf.~Lemma~\ref{lem:aAupdate}).  Proposition~\ref{prop:LARknown}(iii)  provides a relation  between the absolute correlations $C_{k}$ and $C_{k+1}$ across sequential LAR steps.     As discussed in \cite{efron2004least}, Proposition~\ref{prop:LARknown}(iv)  says
   that $\bmu_{k-1}$ would be updated to the projection of $\bmu$ onto the column space of   $\Xmat_k$
if employing a weight $\gamma_k = C_k / A_k$    in Step 6 of  Algorithm~\ref{alg:lar}, though typically $\gamma_k< C_k/A_k$ will hold instead.

 Note that Proposition~\ref{prop:LARknown}   does not provide explicit forms  for  important quantities like population correlations $C_{k}$ or   step approximations $\bmu_{k}$. To establish these, we first state a new result in Lemma~\ref{lem:aAupdate} concerning the computation of  equi-angular vectors and angles in $\LAR(\Xmat,\bmu)$.
For clarity, in a prototypical population path, we note that a single  column  $j_k$ of $\Xmat$ (denoted as $\xvec_{j_k}$, say) enters the active set
 on step $k$ with some sign, say $s_k \equiv  \mathrm{sign}\big(  \xvec_{j_k}^T (\bmu - \bmu_{k-1})   \big)$.  Now, for each step $k=1,\ldots,m$, define $\Imat\equiv \Imat_n$ as an $n\times n$ identity matrix
 and
 let
   \begin{equation}
   \label{eqn:in}
   \evec_k \equiv  (\Imat  - \Pmat_{k-1})\xvec_{j_k}
   \end{equation}
denote the residual/innovation of the new variable $\xvec_{j_k}$ at step $k$ after removing the linear effect $\Pmat_{k-1}\xvec_{j_k}$  of   variables active at the previous step $k-1$,
where $\Pmat_{k-1}$   is
the projection matrix for the column space of $\Xmat_{k-1}$;  above, we set $\Pmat_{0}$ to be a zero matrix and note that innovations $\evec_k$ are orthogonal ($\evec_k^T \evec_j=0$ for $k\neq j$).
We then have the following result.

% spanned by $\xvec_{j_1},\ldots, \xvec_{j_{k-1}}$ (or, equivalently, the columns of $\Xmat_{k-1}$ given by $[s_1 \xvec_{j_1},\ldots, s_{k-1}\xvec_{j_{k-1}} ]$)  for $k=1,\ldots,m$; above we set $\Pmat_{0}$ to be a zero matrix.

\begin{lemma}
\label{lem:aAupdate}   
At each step $k=1,\ldots,m$ in a prototypical population path $\LAR(\Xmat,\bmu)$,
\\
(i)   the sign $s_{k}$ of the column (i.e., $j_k$) of $\Xmat$ entering on step $k$  satisfies
\[
s_{k}  = \mathrm{sign}\Big(  \xvec_{j_k}^T (\bmu - \bmu_{k^\prime})   \Big) \in \{-1,1\}   \quad \mbox{for each $k^\prime = k-1,\ldots,m-1$}
\]
while $ \xvec_{j_k}^T (\bmu - \bmu_m)=0$.  \\
(ii)  the equi-angular vector $\avec_k$ at step $k$ satisfies
\begin{equation}
\label{eqn:aupdate}
    \frac{1}{A_k}\avec_k - \frac{1}{A_{k-1}}\avec_{k-1} = u_k s_k \evec_k
\end{equation}
with an innovation vector $\evec_k$ from (\ref{eqn:in}) and scaling $u_k \equiv (\evec_k^T\evec_k)^{-1}(1 - s_k \xvec_{j_k}^T\avec_{k-1} / A_{k-1})>0$.\\
(iii)  the  angle $A_k$  at step $k$ satisfies
\begin{equation}
\label{eqn:Aupdate}
    \frac{1}{A_k^2} - \frac{1}{A_{k-1}^2} =  u_k^2 \evec_k^T\evec_k =    (1 - s_k \xvec_{j_k}^T\avec_{k-1} / A_{k-1})^2/\evec_k^T\evec_k,
\end{equation}
where we define $A_0  \equiv \infty$ and $\avec_0 \equiv \zerovec$ above.
\end{lemma}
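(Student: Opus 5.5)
I would prove the three parts together by induction on $k$, taking parts (ii)--(iii) first since they are algebraic facts about equi-angular vectors, and then obtaining the sign statement (i) from Proposition~\ref{prop:LARknown}. Write $\Xmat_k = [\Xmat_{k-1}, s_k\xvec_{j_k}]$; in a prototypical path $\calA_k = \calA_{k-1}\cup\{j_k\}$. This uses Proposition~\ref{prop:LARknown}(i) together with the fact that the signs of earlier columns are unchanged, which will come from (i) at earlier steps.

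\textbf{Parts (ii) and (iii).} The defining properties of $\bvec_k \equiv \avec_k/A_k = \Xmat_k(\Xmat_k^T\Xmat_k)^{-1}\onevec$ are that $\bvec_k$ lies in the column space of $\Xmat_k$ and that $\Xmat_k^T\bvec_k = \onevec$. These two properties determine $\bvec_k$ uniquely. I will verify that the candidate $\tilde\bvec \equiv \bvec_{k-1} + u_k s_k \evec_k$ has both properties.

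1. Membership in the column space is clear, because $\evec_k = \xvec_{j_k} - \Pmat_{k-1}\xvec_{j_k}$.
2. For the old columns, $\Xmat_{k-1}^T\evec_k = \zerovec$, so $\Xmat_{k-1}^T\tilde\bvec = \Xmat_{k-1}^T\bvec_{k-1} = \onevec$.
3. For the new column, $s_k\xvec_{j_k}^T\tilde\bvec = s_k\xvec_{j_k}^T\bvec_{k-1} + u_k\,\xvec_{j_k}^T\evec_k$.
4. Since $\xvec_{j_k}^T\evec_k = \evec_k^T\evec_k$, setting this quantity equal to $1$ forces exactly the stated $u_k$, which proves \eqref{eqn:aupdate}.
5. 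For \eqref{eqn:Aupdate}, use $A_k^{-2} = \onevec^T(\Xmat_k^T\Xmat_k)^{-1}\onevec = \bvec_k^T\bvec_k$. Expand $\|\bvec_{k-1}+u_ks_k\evec_k\|^2$; the cross term vanishes because $\bvec_{k-1}\in\mathrm{col}(\Xmat_{k-1})\perp\evec_k$.
6. The base case $k=1$ holds with $A_0=\infty$ and $\avec_0=\zerovec$: then $\evec_1=\xvec_{j_1}$ has unit norm, $u_1=1$, and $A_1=1$.

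\textbf{Positivity of $u_k$.} This is the main obstacle: it requires $s_k\xvec_{j_k}^T\bvec_{k-1} < 1$, i.e. $s_k(\wvec_{k-1})_{j_k} < A_{k-1}$. I would get this from the entrance condition at step $k-1$. By Proposition~\ref{prop:LARknown}, $\cvec_k = \cvec_{k-1} - \gamma_{k-1}\wvec_{k-1}$ and $C_k = C_{k-1}-\gamma_{k-1}A_{k-1}$. The entering index attains the minimum $\gamma_{k-1,j_k}$ in \eqref{eqn:gammak}, which gives $r_{k-1,j_k}(\cvec_k)_{j_k} = C_k$, hence $s_k = r_{k-1,j_k}$. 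Positivity of $\gamma_{k-1,j_k}$, combined with the definition of $r$, means the numerator and denominator are both positive; I would argue this by treating the cases of the sign of $C_{k-1}-(\cvec_{k-1})_j r$ and using $|(\cvec_{k-1})_j|<C_{k-1}$ for $j\notin\calA_{k-1}$. The positive denominator is $A_{k-1}-s_k(\wvec_{k-1})_{j_k}>0$, as needed. The degenerate possibility $\evec_k=\zerovec$ is excluded by linear independence of the columns.

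\textbf{Part (i).} For $k'\ge k-1$, Proposition~\ref{prop:LARknown}(ii) at step $k'+1$ says that $s_k\xvec_{j_k}^T(\bmu-\bmu_{k'}) = C_{k'+1}$ once $j_k$ is active, with the same sign. This needs an inner induction: the signed column $s_k\xvec_{j_k}$ stays in $\Xmat_{k'+1}$, since by (ii) the update $\cvec_{k'+1}=\cvec_{k'}-\gamma_{k'}\wvec_{k'}$ changes every active entry by the common amount $-\gamma_{k'}A_{k'}$. Because $C_{k'+1}>0$ for $k'\le m-1$, the sign is $s_k$. Finally, at $k'=m$ we have $C_{m+1}=0$, so $\xvec_{j_k}^T(\bmu-\bmu_m)=0$.
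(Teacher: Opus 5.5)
Your proposal is correct. For part (i) and for $u_k>0$ you follow the paper's argument. The paper also gets (i) from the sign-preservation identity in the proof of Proposition~\ref{prop:LARknown}(i), where every signed active entry of $\cvec$ drops by the common amount $\gamma_{k'}A_{k'}$. It also gets $u_k>0$ by writing $\gamma_{k-1,j_k}$ with $r_{k-1,j_k}=s_k$: the denominator becomes $\evec_k^T\evec_k A_{k-1}u_k$, and its sign is forced by the positive numerator.

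Where you genuinely differ is in (ii) and (iii). The paper expands $(\Xmat_k^T\Xmat_k)^{-1}$ by block (Schur-complement) inversion and reads off $A_k^{-2}$ directly. It then only asserts that the same formula gives $\Pmat_{k-1}\avec_k/A_k=\avec_{k-1}/A_{k-1}$ and $(\Imat-\Pmat_{k-1})\avec_k/A_k=s_ku_k\evec_k$. You instead characterize $\avec_k/A_k$ as the unique vector in the column space of $\Xmat_k$ with $\Xmat_k^T(\avec_k/A_k)=\onevec$, and check that the candidate satisfies this. You then get (iii) from (ii) by Pythagoras, using $A_k^{-2}=\|\avec_k/A_k\|^2$ and $\avec_{k-1}\perp\evec_k$.

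Your route is coordinate-free and gives a complete proof of (ii) where the paper only sketches one. It also shows directly why $u_k$ must take exactly its stated value. The paper's route has the advantage of giving (iii) without passing through (ii).

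Three small points to tighten:
\begin{itemize}
\item The numerator $C_{k-1}-r(\cvec_{k-1})_{j}$ is positive simply because $|r|\le 1$ and $|(\cvec_{k-1})_j|<C_{k-1}$. No case analysis is needed.
\item When you deduce $s_k=r_{k-1,j_k}$ from $r_{k-1,j_k}(\cvec_k)_{j_k}=C_k$, note that $r_{k-1,j_k}=0$ is excluded, since it would force $C_k=0$ for some $k\le m$.
\item In part (i), the common drop $-\gamma_{k'}A_{k'}$ comes from the defining equi-angular property $\Xmat_{k'}^T\avec_{k'}=A_{k'}\onevec$, not from part (ii) of the lemma. So there is no circularity in your induction, but the citation should be to that property.
\end{itemize}
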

Up until the LAR algorithm concludes, Lemma~\ref{lem:aAupdate}(i) says that signs of active variables cannot change, while
Lemma~\ref{lem:aAupdate}(iii) implies angles $A_k$ must decrease over steps; both notions appear in \cite{efron2004least}, though without proof.
Most importantly, though, Lemma~\ref{lem:aAupdate}(ii)-(iii) states exactly how the addition of a new variable $\xvec_{j_k}$ on a step $k$ impacts the computation of the step-$k$ equi-angular vector $\avec_k$ relative to the previous equi-angular vector $\avec_{k-1}$ involving variables prior to the inclusion of $\xvec_{j_k}$: essentially,
$\avec_k$  updates $\avec_{k-1}$ by a vector $\evec_k\equiv  (\Imat - \Pmat_{k-1})\xvec_{j_k} $    that specifically accounts for the linear contribution  of a newly active variable $\xvec_{j_k}$ after removing the linear effects  $(\Imat - \Pmat_{k-1})$ of earlier active variables.  This aspect is useful towards better explaining LAR because the algorithmic output  $\bmu_{k} \equiv \bmu_{k-1} +  \gamma_k \avec_k  $
on step $k$ can   be understood as incorporating new variable information only through an innovation $\evec_k$ in $\avec_k$ that is orthogonal to  all variables used in  previous equi-angular vectors $\avec_{k-1}$ and step approximations   $\bmu_{k-1}$  (i.e., $\evec_k^T(\bmu_{k} - \gamma_k\avec_k)=0$).  This feature also leads to new and meaningful closed forms for the step correlations $C_k$ given next.
\begin{lemma}
\label{lem:Ck} At each step $k=1,\ldots,m$ in a prototypical population path $\LAR(\Xmat,\bmu)$,  the step correlation   $C_k$  has form
\begin{equation}
\label{eqn:Ckangles}
    C_k = \Big(\frac{1}{A_k^2} - \frac{1}{A_{k-1}^2}\Big)^{-1}\Big(\frac{1}{A_k}\avec_k - \frac{1}{A_{k-1}}\avec_{k-1}\Big)^T\bmu = \frac{s_k \evec_{k}^T \bmu}{ (1 - s_k \xvec_{j_k}^T\avec_{k-1} / A_{k-1})},
\end{equation}
with  $  (1 - s_k \xvec_{j_k}^T\avec_{k-1} / A_{k-1})>0$, and also satisfies
\begin{equation}
\label{eqn:Pek}
  C_k \Big(\frac{1}{A_k}\avec_k - \frac{1}{A_{k-1}}\avec_{k-1}\Big) =    \Pmat_{\evec_k} \bmu,
\end{equation}
with $ \Pmat_{\evec_k} \equiv  \evec_k(\evec_k^T \evec_k)^{-1}\evec_k^T =\Pmat_{k}-\Pmat_{k-1}$ as the projection matrix defined by $\evec_k$ in (\ref{eqn:in}).
\end{lemma}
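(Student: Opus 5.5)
The plan is to derive everything from Lemma~\ref{lem:aAupdate}(ii)--(iii) together with Proposition~\ref{prop:LARknown}(ii). Write $\Delta_k \equiv A_k^{-1}\avec_k - A_{k-1}^{-1}\avec_{k-1} = u_k s_k \evec_k$ and $\delta_k \equiv A_k^{-2}-A_{k-1}^{-2} = u_k^2\evec_k^T\evec_k$. Then
\[
\delta_k^{-1}\Delta_k^T\bmu = \frac{u_k s_k \evec_k^T\bmu}{u_k^2 \evec_k^T\evec_k} = \frac{s_k\evec_k^T\bmu}{u_k\evec_k^T\evec_k} = \frac{s_k\evec_k^T\bmu}{1 - s_k\xvec_{j_k}^T\avec_{k-1}/A_{k-1}},
\]
using the definition of $u_k$. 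This shows that the two right-hand expressions in \eqref{eqn:Ckangles} agree, and positivity of the denominator is inherited from $u_k>0$ in Lemma~\ref{lem:aAupdate}(ii). For $k=1$ we use the conventions $A_0=\infty$ and $\avec_0=\zerovec$, so the denominator equals $1$.

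The main step is to identify this quantity with $C_k$. From the definition of $\avec_k$ we have $\Xmat_k^T\avec_k = A_k\onevec$, so $\avec_k/A_k = \Xmat_k(\Xmat_k^T\Xmat_k)^{-1}\onevec$. Proposition~\ref{prop:LARknown}(ii) gives $\Xmat_k^T(\bmu-\bmu_{k-1}) = C_k\onevec$, hence
\[
A_k^{-1}\avec_k^T(\bmu-\bmu_{k-1}) = C_k\,\onevec^T(\Xmat_k^T\Xmat_k)^{-1}\onevec = C_k/A_k^2 .
\]
Next we use that $\evec_k$ is orthogonal to the column space of $\Xmat_{k-1}$, which contains $\bmu_{k-1}$ (a combination of earlier equi-angular vectors) and $\avec_{k-1}$. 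This gives $\evec_k^T\bmu = \evec_k^T(\bmu-\bmu_{k-1})$, and therefore
\[
\Delta_k^T\bmu = \Delta_k^T(\bmu-\bmu_{k-1}) = C_k/A_k^2 - A_{k-1}^{-1}\avec_{k-1}^T(\bmu-\bmu_{k-1}).
\]
For the last term, apply the same identity at step $k-1$: $\Xmat_{k-1}^T(\bmu-\bmu_{k-2}) = C_{k-1}\onevec$, and subtracting $\gamma_{k-1}\Xmat_{k-1}^T\avec_{k-1} = \gamma_{k-1}A_{k-1}\onevec$ yields $\Xmat_{k-1}^T(\bmu-\bmu_{k-1}) = (C_{k-1}-\gamma_{k-1}A_{k-1})\onevec = C_k\onevec$ by Proposition~\ref{prop:LARknown}(iii). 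Hence $A_{k-1}^{-1}\avec_{k-1}^T(\bmu-\bmu_{k-1}) = C_k/A_{k-1}^2$. Combining, $\Delta_k^T\bmu = C_k\delta_k$, which is the first equality in \eqref{eqn:Ckangles}; the case $k=1$ is trivial because $\avec_0=\zerovec$. The point I expect to need most care is this bookkeeping step: recognizing that previously active columns still have common correlation $C_k$ with the residual $\bmu-\bmu_{k-1}$, i.e.\ that the column index set of $\Xmat_{k-1}$ is contained in that of $\Xmat_k$ with unchanged signs (Lemma~\ref{lem:aAupdate}(i)).

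For \eqref{eqn:Pek}, substitute $\Delta_k = u_k s_k\evec_k$ and the second form of $C_k$:
\[
C_k\Delta_k = \frac{s_k\evec_k^T\bmu}{u_k\evec_k^T\evec_k}\,u_k s_k\evec_k = \evec_k(\evec_k^T\evec_k)^{-1}\evec_k^T\bmu = \Pmat_{\evec_k}\bmu,
\]
using $s_k^2=1$. Finally, $\Pmat_{\evec_k} = \Pmat_k - \Pmat_{k-1}$ follows because the column space of $\Xmat_k$ is the orthogonal direct sum of the column space of $\Xmat_{k-1}$ and $\mathrm{span}\{\evec_k\}$. This holds since $\xvec_{j_k} = \Pmat_{k-1}\xvec_{j_k} + \evec_k$ with $\evec_k \neq \zerovec$, by linear independence of the columns of $\Xmat$.
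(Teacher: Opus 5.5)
Your proposal is correct and follows essentially the same route as the paper: multiply Proposition~\ref{prop:LARknown}(ii) by $\onevec^T(\Xmat_k^T\Xmat_k)^{-1}$, split off the $\avec_{k-1}/A_{k-1}$ term (which equals $C_k/A_{k-1}^2$), drop $\bmu_{k-1}$ because $\evec_k$ is orthogonal to it, and then read off the second form of $C_k$ and \eqref{eqn:Pek} from Lemma~\ref{lem:aAupdate}(ii)--(iii). Your explicit derivation of $\Xmat_{k-1}^T(\bmu-\bmu_{k-1})=C_k\onevec$ via Proposition~\ref{prop:LARknown}(iii), and your justification of $\Pmat_{\evec_k}=\Pmat_k-\Pmat_{k-1}$, usefully fill in steps that the paper leaves implicit.
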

Lemma~\ref{lem:Ck} shows that  the correlation $C_k$  at step $k$ involves a   dot product  between the response mean $\bmu$  and the linear innovation   $\evec_k\equiv  (\Imat - \Pmat_{k-1})\xvec_{j_k} $ with certain scaling.  This scaling is such that, when connected to the step-$k$ update
of the equi-angular vector $\avec_k$ based on
$ (\avec_k/A_k -  \avec_{k-1}/A_{k-1}) \propto \evec_k$ from Lemma~\ref{lem:aAupdate}, the combination $C_k  (\avec_k/A_k -  \avec_{k-1}/A_{k-1})$  is exactly
an orthogonal contribution $\Pmat_{\evec_k} \bmu = (\Pmat_{k}-\Pmat_{k-1})\bmu$,   representing  how much     a newly active variable $\xvec_{j_k}$ on step $k$
can additionally explain of the response mean $\bmu$ after removing the linear effects of all previous variables in the active set. Putting  Lemmas \ref{lem:aAupdate} and \ref{lem:Ck} together, the LAR population path is now seen to build step approximations purely through such correlations $C_k$ and  linear innovations.
 Further, we can  view  the LAR population algorithm as attempting to 
summarize the  response mean $\bmu$ through a type of Gram-Schmidt orthogonalization of columns in $\Xmat$, as stated in the following result.
\begin{lemma} 
\label{lem:muk}
At each step $k=1,\ldots,m$ in a prototypical population path $\LAR(\Xmat,\bmu)$,  the step approximation $\bmu_k \equiv \sum_{j=1}^k \gamma_j \avec_j$ of the response mean $\bmu$ satisfies
\begin{equation}
\label{eqn:sumproj}
\bmu_k   =
\sum_{j=1}^k (C_j-C_{k+1})  \Big(\frac{1}{A_j}\avec_j - \frac{1}{A_{j-1}}\avec_{j-1}\Big) =     
\sum_{j=1}^k \Pmat_{\evec_j}\bmu
-  \frac{C_{k+1} }{A_{k}}\avec_{k},
\end{equation}
where $\sum_{j=1}^k \Pmat_{\evec_j} \bmu =\Pmat_k \bmu $ is the orthogonal projection of $\bmu$ onto the column space of $\Xmat_k$ and where
$\avec_{k}/A_k=\sum_{j=1}^k s_j u_j \evec_j$ may also be re-expressed through innovations $\evec_j$ using (\ref{eqn:aupdate}).
Further, $C_{m+1}=0$ and
$\bmu_m$ is the orthogonal projection of $\bmu$ onto the column space of $\Xmat$.

% \bl{ Further,   if $m<p$ in $\LAR(\Xmat,\bmu)$ then $\Pmat_\Xmat\bmu = \bmu_m$ holds as well.} \rd{[But we could have $\bmu$ which is not in the column space of $\Xmat$.  That is, what if $\bmu = \bmu_m + \vvec$, where $\Xmat^T\vvec = 0$.  Then $\LAR(\Xmat,\bmu)$ would finish after $m$ steps, but the resulting predictor would not be equal to $\bmu$. So I think we need to remove the last statement---since do not want to assume (at this point) that $\bmu$ lies in the column space of $\Xmat$]}

\end{lemma}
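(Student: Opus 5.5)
The plan is to rewrite $\bmu_k=\sum_{j=1}^k\gamma_j\avec_j$ by summation by parts, using Proposition~\ref{prop:LARknown}(iii) to express the weights. Since $C_{j+1}=C_j-\gamma_jA_j$, we have $\gamma_j\avec_j=(C_j-C_{j+1})\avec_j/A_j$. Set $\bvec_j\equiv\avec_j/A_j-\avec_{j-1}/A_{j-1}$, with $\bvec_1=\avec_1/A_1$ by the conventions $A_0=\infty$ and $\avec_0=\zerovec$. Telescoping gives $\avec_j/A_j=\sum_{i\le j}\bvec_i$. Therefore
\[
\bmu_k=\sum_{j=1}^k (C_j-C_{j+1})\sum_{i=1}^j\bvec_i=\sum_{i=1}^k\bvec_i\sum_{j=i}^k(C_j-C_{j+1})=\sum_{i=1}^k (C_i-C_{k+1})\bvec_i ,
\]
which is the first equality in (\ref{eqn:sumproj}).

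For the second equality, I split this sum as $\sum_i C_i\bvec_i-C_{k+1}\sum_i\bvec_i$. By (\ref{eqn:Pek}) of Lemma~\ref{lem:Ck}, each $C_i\bvec_i=\Pmat_{\evec_i}\bmu$. The second piece telescopes back to $C_{k+1}\avec_k/A_k$. The re-expression $\avec_k/A_k=\sum_j s_ju_j\evec_j$ follows by summing (\ref{eqn:aupdate}) of Lemma~\ref{lem:aAupdate}(ii).

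To show $\sum_{j\le k}\Pmat_{\evec_j}=\Pmat_k$, I will use that in a prototypical path the column space of $\Xmat_j$ is that of $\Xmat_{j-1}$ augmented by $\xvec_{j_j}$. Hence $\Pmat_j-\Pmat_{j-1}$ is the projection onto $(\Imat-\Pmat_{j-1})\xvec_{j_j}=\evec_j$. This is nonzero because the columns of $\Xmat$ are linearly independent, so the identity $\Pmat_{\evec_j}=\Pmat_j-\Pmat_{j-1}$ already asserted in Lemma~\ref{lem:Ck} holds. Telescoping with $\Pmat_0=0$ then gives $\Pmat_k$.

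Finally, $C_{m+1}=0$ is part of Definition~\ref{def:1}, so (\ref{eqn:sumproj}) yields $\bmu_m=\Pmat_m\bmu$. It remains to show $\Pmat_m\bmu=\Pmat_\Xmat\bmu$, and I see this as the only step needing care. If $m=p$, the active columns span the column space of $\Xmat$, so we are done. If $m<p$, then $C_{m+1}=0$ means $\Xmat^T(\bmu-\bmu_m)=\zerovec$. So $\bmu-\bmu_m$ is orthogonal to the column space of $\Xmat$, while $\bmu_m$ lies in it. Thus $\bmu_m=\Pmat_\Xmat\bmu$. If the algorithm reaches $|\calA_m|=p$ with $\gamma_m=C_m/A_m$, Proposition~\ref{prop:LARknown}(iii) gives $C_{m+1}=0$ consistently, and (iv) gives $\bmu_m=\Pmat_m\bmu$ directly. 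The main obstacle is bookkeeping, namely checking that the index conventions at $j=1$ and at $k=m$ match (in particular that (iii) of Proposition~\ref{prop:LARknown} is valid at the final step). The algebra itself is routine.
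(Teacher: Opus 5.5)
Your proposal is correct and follows essentially the same route as the paper. You use Proposition~\ref{prop:LARknown}(iii) to write $\gamma_j\avec_j=(C_j-C_{j+1})\avec_j/A_j$, rearrange the sum (you swap the order of summation where the paper does the equivalent Abel rearrangement), apply (\ref{eqn:Pek}), and conclude from $\Xmat^T(\bmu-\bmu_m)=\zerovec$ that $\bmu_m=\Pmat_\Xmat\bmu$. Your explicit justification of $\sum_{j\le k}\Pmat_{\evec_j}=\Pmat_k$ and your separate treatment of $m=p$ are extra care the paper omits; the final orthogonality argument already covers both cases.
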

In other words, Lemma~\ref{lem:muk} shows that the LAR  population path $\LAR(\Xmat,\bmu)$
 tries to explain $\bmu$ through a sequence of linearly orthogonal contributions $\Pmat_{\evec_1}\bmu,\Pmat_{\evec_2}\bmu,\ldots,\Pmat_{\evec_m}\bmu$
defined by a Gram-Schmidt orthogonalization $\evec_k \equiv (\Imat - \Pmat_{k-1})\xvec_{j_k} $, $k=1,\ldots,m$, of the variable sequence $\xvec_{j_1},\ldots, \xvec_{j_m}$
given in the   population path $\LAR(\Xmat,\bmu)$.   
Because correlation parameters $C_k \downarrow$ decrease as $k\uparrow$, the step approximations $\bmu_k$ more closely resemble  linear projections $\Pmat_k \bmu$ as steps $k$ increase.
If the LAR population path $\LAR(\Xmat,\bmu)$ stops after $m$ steps, the resulting step approximation $\bmu_m = \Pmat_m \bmu$ is  simply the linear projection of $\bmu$
onto the space spanned by the $m$ active variables $\xvec_{j_1},\ldots, \xvec_{j_m}$; when $m$ is less than the number $p$ of columns in   $\Xmat$, then a stoppage of  LAR   entails that 
only the $m$ active variables are needed to 
write the projection of the response mean $\bmu$ onto all columns of $\Xmat$.
Lemma~\ref{lem:muk}
is also a completely different result from $\bmu_k  +  (C_{k+1}/A_{k+1})\avec_{k+1} =\Pmat_{k+1} \bmu$
given in Proposition~\ref{prop:LARknown}(iv), where the latter explains a step approximation $\bmu_k$ through future variables using $\avec_{k+1},A_{k+1},\Pmat_{k+1}$
rather than through current variables using $\avec_{k},A_{k},\Pmat_{k}(= \sum_{j=1}^k \Pmat_{\evec_j})$ as  in the former result.

Lastly in this section,   Lemma~\ref{lem:whichtoenter}  plays a further role in  advancing the  interpretation of step correlations $C_k$.  Below, $\avec_{0}$ and $\Pmat_{0}$ denote a vector and matrix of zeros, with $A_0\equiv \infty$.

\begin{lemma}
\label{lem:whichtoenter} At each step $k=1,\ldots,m$ in a prototypical population path $\LAR(\Xmat,\bmu)$, \\
(i) for any  column   $j\notin \calA_k$ of $\Xmat$, it holds  in Algorithm~\ref{alg:lar} that
\begin{equation}
\label{eqn:projkj}
(\cvec_k)_j  - \frac{C_k}{A_k}(\wvec_k)_j = \xvec_j^T(\Imat - \Pmat_k)\bmu
\end{equation}
and that  $ A_k > (\wvec_k)_j r_{k,j} \equiv \xvec_j^T\avec_{k}r_{k,j}$  with $r_{k,j} =\mathrm{sign}(\xvec_j^T(\Imat - \Pmat_k)\bmu)$ as in (\ref{eqn:gammak}).\\
(ii)  in (6.)~of Algorithm~\ref{alg:lar}, a column  $j_k$   enters the active set on step $k$  if $j_k$ maximizes
\begin{equation}
\label{eqn:Ckplus1j}
C_{k,j} \equiv   \frac{|\xvec_j^T(\Imat - \Pmat_{k-1})\bmu |}{1 - r_{k-1,j} \xvec_j^T\avec_{k-1}/A_{k-1}}  \geq 0
\end{equation}
over $j \in \{1,\ldots,p\}\setminus \calA_{k-1}$, noting that $r_{k-1,j}= \sign ( \xvec_j^T(\Imat - \Pmat_{k-1})\bmu  )$ holds  as well as $1 > r_{k-1,j} \xvec_j^T\avec_{k-1}/A_{k-1}$. \\
(iii)
the correlation $C_k$ at step $k$  further satisfies
\[C_k =C_{k,j_k} = \max_{j \not\in \calA_{k-1}}C_{k,j} >0,\]
while the sign $s_k$ satisfies $s_k = r_{k-1,j_k}$.
\end{lemma}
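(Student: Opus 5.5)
Part (i) follows from Proposition~\ref{prop:LARknown}(iv) and the definition of $\wvec_k$. For $j\notin\calA_k$ we have $(\cvec_k)_j=\xvec_j^T(\bmu-\bmu_{k-1})$ and $(\wvec_k)_j=\xvec_j^T\avec_k$. Hence
\[
(\cvec_k)_j-\frac{C_k}{A_k}(\wvec_k)_j=\xvec_j^T\Big(\bmu-\bmu_{k-1}-\frac{C_k}{A_k}\avec_k\Big)=\xvec_j^T(\Imat-\Pmat_k)\bmu ,
\]
where the last equality uses \eqref{eqn:Pkmu}. So $r_{k,j}=\sign(\xvec_j^T(\Imat-\Pmat_k)\bmu)$. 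For the inequality $A_k>(\wvec_k)_j r_{k,j}$ I would argue by Lemma~\ref{lem:gammak} and \eqref{eqn:gammak}. The numerator of $\gamma_{k,j}$ is $C_k-|(\cvec_k)_j|\cdot(\pm1)$. Since $|(\cvec_k)_j|<C_k$ for $j\notin\calA_k$, this numerator is strictly positive. Positivity of $\gamma_{k,j}$ then forces the denominator $A_k-(\wvec_k)_jr_{k,j}$ to be positive.

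The case $r_{k,j}=0$ needs care. There the ratio is $C_k/A_k>0$, and the inequality reads $A_k>0$, which is trivial. I would also double-check that Lemma~\ref{lem:gammak} indeed guarantees every $\gamma_{k,j}>0$, not just the minimum. If it does not, I would prove positivity of the denominator directly. Write $D=A_k-(\wvec_k)_jr_{k,j}$ and $N=C_k-(\cvec_k)_jr_{k,j}$. Then
\[
r_{k,j}\Big[(\cvec_k)_j-\frac{C_k}{A_k}(\wvec_k)_j\Big]=\frac{C_k}{A_k}D-N\ge 0,
\]
so $D\ge (A_k/C_k)N>0$. This self-contained argument is the one I would actually use.

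For (ii) and (iii), I would work at step $k-1$, using $\bmu_{k-1}=\bmu_{k-2}+\gamma_{k-1}\avec_{k-1}$. By Proposition~\ref{prop:LARknown}(iii), the entering variable is the $j$ attaining $\gamma_{k-1}=\min_j\gamma_{k-1,j}$, and $C_k=C_{k-1}-\gamma_{k-1}A_{k-1}$. The idea is to rewrite this minimization as a maximization. Let $N_j$ and $D_j$ denote the numerator and denominator of $\gamma_{k-1,j}$. Then
\[
C_{k-1}-\gamma_{k-1,j}A_{k-1}=\frac{C_{k-1}D_j-N_jA_{k-1}}{D_j}=\frac{r_{k-1,j}\big[A_{k-1}(\cvec_{k-1})_j-C_{k-1}(\wvec_{k-1})_j\big]}{D_j}.
\]
By (i) at step $k-1$, the bracket times $r_{k-1,j}$ equals $A_{k-1}|\xvec_j^T(\Imat-\Pmat_{k-1})\bmu|$. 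Dividing numerator and denominator by $A_{k-1}$ gives exactly $C_{k,j}$ in \eqref{eqn:Ckplus1j}. Since $A_{k-1}>0$, minimizing $\gamma_{k-1,j}$ is the same as maximizing $C_{k,j}$, so $j_k$ maximizes $C_{k,j}$ and $C_k=C_{k,j_k}=\max_jC_{k,j}$. Positivity $C_k>0$ holds because $k\le m$ in the prototypical path. The bound $1>r_{k-1,j}\xvec_j^T\avec_{k-1}/A_{k-1}$ is just (i) at step $k-1$ divided by $A_{k-1}$.

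The main obstacle is the boundary step $k=1$, where $A_0=\infty$ and $\avec_0=\zerovec$. There $C_{1,j}=|\xvec_j^T\bmu|$, and the statement is immediate from the definition of $C_1$ and $\calA_1$, so I would handle it separately. A second point is the sign identity $s_k=r_{k-1,j_k}$. For this I would use $\xvec_{j_k}^T(\bmu-\bmu_{k-1})=(\cvec_{k-1})_{j_k}-\gamma_{k-1}(\wvec_{k-1})_{j_k}$ and compare it with the numerator identity above. At the minimizing $j_k$, the value $(\cvec_{k-1})_{j_k}-\gamma_{k-1}(\wvec_{k-1})_{j_k}$ equals $r_{k-1,j_k}C_k$, because $N_{j_k}=\gamma_{k-1}D_{j_k}$. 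Its sign is therefore $r_{k-1,j_k}$, using $C_k>0$. If $r_{k-1,j_k}=0$, then $C_{k,j_k}=0$, contradicting $C_k>0$, so the sign is well defined.
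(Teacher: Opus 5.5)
Your proposal is correct and follows the paper's proof essentially step for step: (i) via \eqref{eqn:Pkmu} and positivity of $\gamma_{k,j}$, (ii) by rewriting $C_{k-1}-\gamma_{k-1,j}A_{k-1}$ as $C_{k,j}$, and (iii) via Proposition~\ref{prop:LARknown}(iii) together with the sign computation the paper records as \eqref{eqn:signdetermine}. Your self-contained bound $D\ge (A_k/C_k)N>0$ is the inequality the paper proves separately as Lemma~\ref{lem:denombound}, and your separate treatment of $k=1$ covers a boundary case that the paper's proof does not address explicitly.
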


Lemma~\ref{lem:whichtoenter} offers a new a  perspective about how the {\it ordering} of variables/correlations is determined  across  LAR steps, which can be explained as follows.  Lemmas~\ref{lem:whichtoenter}(i)-(ii) are clarifying relations about the part (\ref{eqn:gammak}) of the LAR population Algorithm~\ref{alg:lar}, which
determines how one update step ends and the next begins. Essentially, how the algorithm ends at a step $k-1$ (i.e., based on a current active set $\calA_{k-1}$, projection matrix $\Pmat_{k-1}$, and equi-angular vector $\avec_{k-1}$ and angle $A_{k-1}$) determines which variable $j_k$ enters the active set $\calA_k$ on step $k$. In particular,
 Lemma~\ref{lem:whichtoenter}(ii) re-writes the Algorithm~\ref{alg:lar}   criterion (\ref{eqn:gammak}) to show that a non-active variable $j \in \{1,\ldots,p\}\setminus \calA_{k-1}$ at step $k-1$ competing to enter the active set $\calA_k$ on step $k$ must maximize  the nonnegative criterion  $C_{k,j}$ in \eqref{eqn:Ckplus1j}, or equivalently, its square $C_{k,j}^2$.  Using Lemma~\ref{lem:aAupdate} with Lemma~\ref{lem:whichtoenter}(ii), we can view this square  as
\begin{equation}
\label{eqn:Cksquared}
C^2_{k,j} = \frac{\mathrm{SS}_{k,j}}{A_{j,k}^{-2} - A_{k-1}^{-2}},
\end{equation}
where $\mathrm{SS}_{k,j} \equiv   [\xvec_j^T(\Imat - \Pmat_{k-1})\bmu]^2/ \xvec_j^T (\Imat - \Pmat_{k-1}) \xvec_j$  is the sequential sum of squares (SS) for how much of the response mean $\bmu$ can be explained by a linear relationship with variable $\xvec_j$ after accounting for  those variables active at step $k-1$, and
$A_{j,k}$ denotes what  angle would arise at step $k$ if variable $\xvec_j$ were to be admitted into the active set $\mathcal{A}_k$ and used to produce the step-$k$ equi-angular vector $\avec_k$ and angle $A_k$ via the recursions in Lemma~\ref{lem:aAupdate}(ii)-(iii); refer to Remark~\ref{rem:2} for details on how computation of $A_{j,k}$  requires signing variable $\xvec_j$ with a sign $r_{k-1,j}$.
From \eqref{eqn:Cksquared}, we can see that, when choosing the next variable to enter, LAR does not simply consider the variable with the largest  SS as greedy  selection would prescribe, but rather LAR seeks a variable that maximizes  SS (i.e., the numerator of (\ref{eqn:Cksquared}))  relative to how an equi-angle would update (i.e., the denominator of (\ref{eqn:Cksquared})).
Namely, variable $\xvec_j$ is a strong candidate for entry on step $k$ if the correlation between $\bmu$ and $(\Imat - \Pmat_{k-1})\xvec_j$ is large and if the entrance of $\xvec_j$ with sign $r_{k-1,j}$ would produce a small change in the angle $A_{k-1}$ to a next angle $A_k$.

\begin{remark}
\label{rem:2}
  Lemma~\ref{lem:whichtoenter} and (\ref{eqn:Cksquared}) entail that a variable $\xvec_j$ must be signed $r_{k-1,j}$
such that $1 > r_{k-1,j} \xvec_j^T\avec_{k-1}/A_{k-1}$ holds in (\ref{eqn:Ckplus1j}).  This ensures that if a signed variable $\xvec_j$ is included in the active set $\calA_{k}$ at step $k$ to update the equi-angular vector $\avec_{k}$ and angle $A_k$, then the vector $\avec_{k}/A_k$ at step $k$
  will   differ  from the previous vector $\avec_{k-1}/A_{k-1}$.  That is, the LAR algorithm forces equi-angular vectors to {\it change} across steps,
   as also entailed by Lemma~\ref{lem:aAupdate}  and  the correlation expressions of Lemma~\ref{lem:Ck} (i.e., where the entering variable $j_k$  on step $k$ satisfies  $1 > s_k \xvec_{j_k}^T \avec_{k-1}/A_{k-1}$ in those results and in Lemma~\ref{lem:whichtoenter}(iii)).  To appreciate how LAR can change   equi-angular vectors/angles across steps, consider a case where some variable  index $j\notin \mathcal{A}_{k-1}$  is not active at step $k-1$
   (so that $|(\cvec_{k-1})_j| < C_{k-1}$)
  but is already equiangular to $\avec_{k-1}$ at step $k-1$ (so that $(\wvec_{k-1})_j \equiv \xvec_{j}^T \avec_{k-1} = A_{k-1}$);  this implies that $r_{k-1,j} = \mathrm{sign}((\cvec_{k-1})_j - C_{k-1}(\wvec_{k-1})_j /A_{k-1})=-1$  in (\ref{eqn:gammak}) of  Algorithm~\ref{alg:lar} or in Lemma~\ref{lem:whichtoenter},
  so that the LAR algorithm would flip the sign of   $\xvec_j$ before   considering this variable for possible entry at  step $k$ in order to   force a  subsequent  equi-angular vector $\avec_k$ and angle $A_k$ to change.
\end{remark} 
\begin{remark}
 \label{rem:3}
 \cite{efron2004least} developed a notion of degrees of freedom (df), or model complexity, for LAR sample paths based on a covariance study, suggesting df $k$ in LAR sample paths should equate to the number $k$ of steps.    Lemma~\ref{lem:muk}  suggests a related possible notion of df for the LAR population path, when df is standardly measured by the size/trace of a linear combination. That is, the step-$k$ approximation $\bmu_k$ emerges as a linear combination  $\bmu_k = \Mmat_k \bmu $ of the response mean $\bmu$, where the matrix $\Mmat_k \equiv \Pmat_k  +    \avec_{k}\evec_{k+1}^T c$  has a form by  
(\ref{eqn:Ckangles})-(\ref{eqn:sumproj}) (for some constant $c$) such that
\[
\mathrm{trace}(\Mmat_k) = \mathrm{trace}(\Pmat_k) +    \evec_{k+1}^T\avec_{k}c =k
\]
holds, using that $\Pmat_k$ is a projection matrix based on the first $k$ variables in the LAR population path  so that $\mathrm{trace}(\Pmat_k)=k$, and that the vector $\avec_k$ is a linear combination of these variables whereby the $k+1$ step innovation vector $\evec_{k+1} \equiv  (\Imat - \Pmat_{k})\xvec_{j_{k+1}} $ from (\ref{eqn:in}) is orthogonal  to   $ \avec_k=\Pmat_k\avec_k$, i.e., so $\mathrm{trace}( \avec_{k}\evec_{k+1}^T )=\evec_{k+1}^T\avec_{k}=0$ follows in $\mathrm{trace}(\Mmat_k)$.  
\end{remark}
\begin{remark}
 \label{rem:1}
  While   results in this section were prescribed for the LAR population path   $\LAR(\Xmat,\bmu)$ as the output of Algorithm~\ref{alg:lar}, the same results also hold for the LAR sample path
 $\LAR(\Xmat,\yvec)$ as the output of Algorithm~\ref{alg:lar_efron} upon a notational change of adding ``hats'' ($~\hat{\cdot}\;$) to quantities and replacing ``$\bmu$" with ``$\yvec$."
 As explained at the end of Section~\ref{sec:lar}, however, the LAR sample path will typically terminate only after $p$ steps with $p<n$ columns of $\Xmat$.
 \end{remark}

\section{Estimation properties of the LAR sample path}
\label{sec:Tk}

The previous section showed that the LAR population path $\LAR(\Xmat,\bmu)$ produces sequences of variable indices $j_1,\ldots,j_m$, variable signs $s_1,\ldots,s_k$, and positive step correlations $C_1>\cdots>C_m$ for explaining a response mean $\bmu$
(cf.~Lemmas~\ref{lem:aAupdate}-\ref{lem:whichtoenter}).   We next consider inference about these  LAR population quantities based on the LAR sample version $\LAR(\Xmat,\yvec)$.  In particular, several large-sample properties for LAR estimation  can be established under some typical linear regression assumptions:

\begin{assum}
\label{assum:regassumptions}
Suppose:
\begin{enumerate}[label=(R\arabic*)]
    \item For each $n \geq 1$, $\yvec_n = \Xmat_n \bbeta + \berror_n$, where $\Xmat_n = [\xvec_{n1}~\dots~\xvec_{np}]$ is an $n \times p$ design matrix with rank $p$, $\bbeta$ is a fixed vector in $\mathbb{R}^p$, and $\berror_n$ is a vector of error terms such that $\E (\berror_n) = \zerovec$ and $\Var (\berror_n) = \sigma^2\Imat_n$ for $\sigma^2>0$.
    \item  as $n\to\infty$, $n^{-1}\Xmat_n^T\Xmat_n \to \bSigma$ as $n \to \infty$, where $\bSigma$ is positive definite.
    \item $n^{-1/2}\Xmat_n^T\berror_n \indist \calN(\zerovec,\bSigma \sigma^2)$   (i.e., scaled error vectors have a normal limit) and $ n^{-1}\berror_n^T\berror_n \inprob \sigma^2$ (i.e., the average of squared error terms converges in probability to $\sigma^2$) as $n \to \infty$.
    \item The normalized design matrix $\Xmat$, the noisy response $\yvec$, and the noiseless response $\bmu$ are obtained as $\Xmat =  \Xmat_n \diag( \|\xvec_{n1}\|^{-1} ,\dots, \|\xvec_{np}\|^{-1})$, $\yvec = n^{-1/2}\yvec_n$, and $\bmu = n^{-1/2}\Xmat_n \bbeta$.
    %     \begin{align*}
    % \Xmat &=  \Xmat_n \diag( \|\xvec_{n1}\|^{-1} ,\dots, \|\xvec_{np}\|^{-1}) \\
    % \yvec &= n^{-1/2}\yvec_n\\
    % \bmu &= n^{-1/2}\Xmat_n \bbeta
    % \end{align*}
\end{enumerate}
% In this setup define a normalized version of the design matrix as
    % as well as rescaled noisy and noiseless responses $\yvec = n^{-1/2}\yvec_n$ and $\bmu = n^{-1/2}\Xmat_n \bbeta$, respectively.
\end{assum}

In Assumption \ref{assum:regassumptions}, $\Xmat_n$ and $\yvec_n$ represent the observed or ``raw'' design matrix and response vector, whereas $\Xmat$ and $\yvec$ represent rescaled versions of these to be used as inputs in the LAR algorithm. Such rescalings   in (R4) ensure that estimated step correlations $\hat C_k$ from $\LAR(\Xmat,\yvec)$ (i.e., Algorithm~\ref{alg:lar_efron})  and their population path counterparts $C_k$  from $\LAR(\Xmat,\bmu)$ (i.e., Algorithm~\ref{alg:lar}) do not diverge as $n \to \infty$ and that estimates $\hat C_k$ will have standard  $1/\sqrt{n}$ convergence rates; in other words, under (R4), we may write $\yvec = \bmu + \berror$, where $\berror \equiv n^{-1/2}\berror_n$ and $\bmu$ have stable norms as $n \to \infty$.  The conditions in  Assumption~\ref{assum:regassumptions} are common to 
asymptotic developments for  standard linear regression (cf.~\cite{freedman1981bootstrapping,lai1979strong}), and do not technically require   model errors  to be   independent.    

\subsection{Consistency of the LAR sample path}
Our first main result gives conditions under which the LAR sample path   is consistent  
for the LAR population path.  Recall that, in a prototypical population path (Definition~\ref{def:1}),  
 a sequence of columns $j_1,\dots,j_m$ of $\Xmat$ enter the active set  one-by-one on steps $1,\dots,m$ (i.e., $\calA_k\setminus \calA_{k-1} =\{j_k\}$ over each $k=1,\ldots,m$ for $m\leq p $) with corresponding step correlations $C_{1}>\cdots >C_m> C_{m+1}=0$ and corresponding signs $s_1,\ldots,s_m \in\{\pm 1\}$;  those variables which do not enter before the population path concludes on step $m$ make no contribution toward explaining the response $\bmu$.   Note that, while  the population path may terminate  after $m$ steps for some given $m\leq p$, the sample path will standardly  complete exactly $p$ steps before terminating, yielding $p$ estimated  correlations $\hat C_1 > \dots >  \hat C_p > \hat C_{p+1} = 0$.
 Despite such differences, the LAR sample path   can capture  the population version, as seen next.
 In particular, as the sample size $n$ grows, Theorem~\ref{thm:larconsistency}  states that the LAR sample path   will produce the following: (i)    a sequence of estimated   active sets $\hat{\calA}_k$ as well as the variable signings that correctly match the population versions $\calA_k$ for $k \leq m$; (ii)   estimated step correlations $\hat C_k$  that converge in probability to their non-zero population counterparts $C_k$ for $k \leq m$, (iii) vectors of estimated correlations $\hat \cvec_k \equiv \Xmat^T (\yvec - \hat \yvec_{k-1}) $ that also converge more generally  to counterparts $\cvec_k \equiv \Xmat^T (\bmu-\bmu_{k-1})$ on each step $k\leq m$, and (iv) estimated step correlations $\hat C_k$ that  converge in probability to zero for any $m < k \leq p$, indicating that such sample correlations are correctly estimating population counterparts in such cases (i.e., $C_{k}=0$ for $k>m$).

\begin{theorem}[Consistency of $\LAR(\Xmat,\yvec)$]
\label{thm:larconsistency}
For   $n \geq n_0$ given some $n_0$, suppose the   prototypical population path $\LAR(\Xmat,\bmu)$ holds for columns $j_1,\ldots,j_m$ of $\Xmat$ with $\calA_k\setminus \calA_{k-1} = \{j_k\}$ for   $k=1,\ldots,m$ (some $m\leq p$) with   signs   $s_{1},\dots,s_{m}$
and  correlations $C_1 >\cdots>C_m > C_{m+1}=0$. Along with Assumption~\ref{assum:regassumptions}, suppose also in Algorithm~\ref{alg:lar} that for some $\delta>0$  
\begin{enumerate}[label=(M\arabic*)] \itemsep0cm
    \item $C_k - |(\cvec_k)_j| \geq \delta$   for all $j \not\in \calA_k$ and each $k=1,\dots,m$ , and also
    \item $A_k(\gamma_{k,j} - \gamma_k) \geq \delta$ for all $j \not\in \calA_k \cup\{j_{k+1}\}$ and each $k = 1,\dots,m-1$.
\end{enumerate}
Then, in the sample path $\LAR(\Xmat,\yvec)$, it holds as $n\to \infty$ that\\
(i) $P(\hat \calA_k \setminus \hat \calA_{k-1} = \{j_k\} \text{ and } \hat s_k = s_k) \to 1$ for each $k=1,\dots,m$, where $\hat \calA_0 \equiv \emptyset \equiv \calA_{0}$; \\
(ii) $\hat C_k - C_k \inprob 0$ for each $k=1,\dots,m$;\\  
(iii)     $\max_{j \not\in \calA_k} |(\hat \cvec_k)_j - (\cvec_k)_j | \inprob 0$
  for    each $k=1,\dots,m$; and\\
(iv) $\hat C_k \inprob 0$   for each $k > m$. 
\end{theorem}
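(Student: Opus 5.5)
The argument is an induction on the step $k=1,\dots,m$. On an event whose probability tends to one, the sample path reproduces the population path's active sets and signs, and the sample quantities $\hat \cvec_k,\hat C_k,\hat A_k,\hat \avec_k,\hat \wvec_k,\hat\gamma_k$ stay within $o_p(1)$ of their population counterparts.

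\textbf{Basic noise bound.} Write $\yvec=\bmu+\berror$ with $\berror=n^{-1/2}\berror_n$. Under (R2)--(R4),
\[
\Xmat^T\berror = \diag(\|\xvec_{n1}\|^{-1},\dots,\|\xvec_{np}\|^{-1})\,\Xmat_n^T\berror_n/\sqrt{n}.
\]
Here $\|\xvec_{ni}\|/\sqrt n\to \Sigma_{ii}^{1/2}>0$, so $\Xmat^T\berror = O_p(n^{-1/2})$. For any fixed sign pattern and active set, $\Pmat_{\calA}\berror$ is a linear combination of $\Xmat^T\berror$ with coefficients $(\Xmat_{\calA}^T\Xmat_{\calA})^{-1}$, and these are bounded because $\Xmat^T\Xmat\to$ the correlation matrix of $\bSigma$, which is positive definite. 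Hence every projection of $\berror$ onto a column space spanned by columns of $\Xmat$ is $o_p(1)$.

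\textbf{Base step.} $\hat\cvec_1=\cvec_1+\Xmat^T\berror$, so (iii) holds at $k=1$. By (M1) the gap $C_1-|(\cvec_1)_j|\ge\delta$ for $j\neq j_1$ survives an $o_p(1)$ perturbation. Therefore $\hat\calA_1=\{j_1\}$, $\hat s_1=s_1$ and $\hat C_1\to_p C_1$ with probability tending to one.

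\textbf{Inductive step.} Assume that on the good event $\hat\calA_k=\calA_k$, the signs agree, and $\hat\cvec_k-\cvec_k=o_p(1)$. Then $\hat\Xmat_k=\Xmat_k$ exactly, so $\hat A_k=A_k$, $\hat\avec_k=\avec_k$ and $\hat\wvec_k=\wvec_k$; these are deterministic. Since $\hat \gamma_{k,j}$ in Lemma~\ref{lem:gammak} is a continuous function of $(\hat\cvec_k)_j,\hat C_k$, we get $\hat\gamma_{k,j}-\gamma_{k,j}=o_p(1)$, provided the denominators $A_k-(\wvec_k)_jr_{k,j}$ are bounded away from zero and $r_{k,j}$ is stable.

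For $j\neq j_{k+1}$, Lemma~\ref{lem:whichtoenter}(i) gives the positive denominator. (M2) gives $A_k(\gamma_{k,j}-\gamma_k)\ge\delta$, which forces $\hat\gamma_k=\hat\gamma_{k,j_{k+1}}$ and hence $\hat\calA_{k+1}\setminus\hat\calA_k=\{j_{k+1}\}$. It also gives $\hat s_{k+1}=\hat r_{k,j_{k+1}}=r_{k,j_{k+1}}=s_{k+1}$ by Lemma~\ref{lem:whichtoenter}(iii) and its sample analogue (Remark~\ref{rem:1}).

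The next correlation vector then satisfies
\[
\hat\cvec_{k+1}=\hat\cvec_k-\hat\gamma_k\wvec_k\to_p\cvec_k-\gamma_k\wvec_k=\cvec_{k+1}.
\]
Consequently $\hat C_{k+1}=\hat C_k-\hat\gamma_k A_k\to_p C_{k+1}$ by Proposition~\ref{prop:LARknown}(iii). Applying (M1) at step $k+1$ rules out any additional entrant for $k+1\le m$. This closes the induction and yields (i)--(iii).

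\textbf{Main obstacle: sign stability.} The delicate point is the stability of the signs $r_{k,j}$. By Lemma~\ref{lem:whichtoenter}(i), $r_{k,j}$ is the sign of $\xvec_j^T(\Imat-\Pmat_k)\bmu$, and this quantity can be zero for irrelevant variables, so $\hat r_{k,j}$ may flip. I would avoid needing convergence of $\hat r_{k,j}$ by working instead with the $\min^+$ form \eqref{eqn:efrongamma}. Both candidate ratios for each $j$ converge, and one must check that a ratio whose population denominator is non-positive cannot become the small positive minimum. When the population denominator is zero, the population ratio is infinite or positive-over-small, so its sample version tends to $+\infty$ or stays above $\gamma_k+\delta/A_k$. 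The task is to argue that (M2), together with Lemma~\ref{lem:whichtoenter}(i), supplies the needed separation uniformly over the finitely many $j$.

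\textbf{Part (iv).} On the good event, $\hat\calA_m=\calA_m$. For $k>m$, by Lemma~\ref{lem:muk} applied to the sample path, $\hat C_{m+1}$ is the maximum of $|\xvec_j^T(\yvec-\hat\yvec_m)|$, and Proposition~\ref{prop:LARknown}(ii) gives $\hat\yvec_m=\Pmat_m\yvec-(\hat C_{m+1}/A_m)\avec_m$. Since $\Xmat^T(\Imat-\Pmat_m)\bmu=\cvec_{m+1}=0$ (because $C_{m+1}=0$), the expression $\hat C_{m+1}$ reduces to quantities of order $\Xmat^T(\Imat-\Pmat_m)\berror=o_p(1)$, up to the bounded factor $1-r\,\xvec_j^T\avec_m/A_m$, which Lemma~\ref{lem:whichtoenter}(ii) keeps bounded from below. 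Finally, the sequence $\hat C_k$ is decreasing and nonnegative, so $\hat C_k\le\hat C_{m+1}\to_p0$ for all $k>m$.
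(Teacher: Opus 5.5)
\textbf{Verdict: the induction and the good event you use are right, but the key step of the inductive argument is left unproved, and part (iv) relies on a bound that the lemma you cite does not supply.}

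\textbf{The inductive step.} You work on the event where $\hat\calA_k=\calA_k$ with matching signs, so that $\hat A_k,\hat\avec_k,\hat\wvec_k$ equal their deterministic population versions. You obtain $\hat\gamma_{k,j}-\gamma_{k,j}=o_p(1)$ only under the proviso that $\hat r_{k,j}$ is stable. You then correctly note that this fails when $\mu_{k,j}\equiv\xvec_j^T(\Imat-\Pmat_k)\bmu$ vanishes, since the sample sign is then pure noise. The repair is left as ``the task is to argue\ldots''. Until it is done, the claim that (M2) forces $\hat\gamma_k=\hat\gamma_{k,j_{k+1}}$ is unsupported, because a flipped $\hat r_{k,j}$ changes which formula defines $\hat\gamma_{k,j}$.

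Your $\min^{+}$ idea does close the gap, but the ingredient is (M1), not (M2) with Lemma~\ref{lem:whichtoenter}(i). Your concern about zero population denominators is moot: on the good event the denominators $A_k\mp(\wvec_k)_j$ are the same numbers in sample and population.
\begin{itemize}
\item The numerators satisfy $\hat C_k\mp(\hat\cvec_k)_j-(C_k\mp(\cvec_k)_j)\inprob 0$, with $C_k\mp(\cvec_k)_j\ge C_k-|(\cvec_k)_j|\ge\delta$ by (M1).
\item Hence, with probability tending to one, the admissible (positive) ratios are the same in sample and population, and each is multiplied by a factor $1+o_p(1)$.
\item For each $j$ the $\min^{+}$ of the two ratios equals $\gamma_{k,j}\le C_k/A_k=O(1)$. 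So $\max_{j\notin\calA_k}|\hat\gamma_{k,j}-\gamma_{k,j}|\inprob 0$ without ever comparing $\hat r_{k,j}$ to $r_{k,j}$.
\item (M2) then gives a strict win for $j_{k+1}$.
\item Your recursion $\hat\cvec_{k+1}=\hat\cvec_k-\hat\gamma_k\wvec_k$ gives $(\hat\cvec_{k+1})_{j_{k+1}}\inprob s_{k+1}C_{k+1}$ with $C_{k+1}\ge\delta$, so $\hat s_{k+1}=s_{k+1}$.
\end{itemize}
The paper handles the same issue differently. It works with the entrance criterion $\hat C_{k+1,j}$ of Lemma~\ref{lem:whichtoenter}(ii) and notes that $\hat r_{k,j}\mu_{k,j}/(1-\hat r_{k,j}(\wvec_k)_j/A_k)\le C_{k+1,j}$ whichever sign $\hat r_{k,j}$ takes, since a wrong sign makes this quantity nonpositive. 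Sign flips can therefore only weaken a competitor.

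\textbf{Part (iv).} You need $1-\hat r_{m,j}\xvec_j^T\avec_m/A_m$ bounded away from zero uniformly in $n$ for the realized, noise-driven sign. Lemma~\ref{lem:whichtoenter}(ii) only gives strict positivity, for the population sign, at each fixed $n$. Any of the following supplies the needed bound:
\begin{itemize}
\item The paper's route: the sample analogue of Lemma~\ref{lem:denombound}, $1-\hat r_{m,j}(\wvec_m)_j/A_m\ge 1-|(\hat\cvec_m)_j|/\hat C_m$, combined with (ii)--(iii) at step $m$ and (M1).
\item Directly from $\mu_{m,j}=0$: this gives $|(\wvec_m)_j|/A_m=|(\cvec_m)_j|/C_m\le 1-\delta/C_m\le 1-\delta/C_1$, with $C_1$ bounded in $n$, so both signs are safe.
\item Rerun your $\min^{+}$ argument at $k=m$. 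There every $\gamma_{m,j}=C_m/A_m$, so $\hat C_{m+1}=\hat C_m-\hat\gamma_m A_m\inprob 0$.
\end{itemize}
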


To motivate conditions (M1) and (M2) of Theorem \ref{thm:larconsistency}, recall that the LAR population path decides what variable to admit at each step
by maximizing a correlation criterion (\ref{eqn:Ckplus1j})  described in 
Lemma~\ref{lem:whichtoenter}(ii).  Conditions (M1) and (M2)  are intended to be mild  for entailing  that the winning population correlation $C_{k+1,j_{k+1}} =C_{k+1}$ at a step
$k+1$ in this criterion should  exceed its competitors $C_{k+1,j}$ by some margin $\delta>0$.  This feature in a LAR population path means that a  LAR sample path then has a chance to resolve from data which variables should enter on steps of the algorithm in practice, at least as the sample size  $n$ increases.        
Such an aspect appears natural and seems to differ
from the more technical irrepresentable condition found with Lasso for consistency \citep{zhao2006model}.    In particular, condition (M1)  states  that the maximal population  correlation $C_{k}>0$ at a step $k$ should be distinguishable (by some margin $\delta$) from other, smaller correlations $(\cvec_k)_{j}$, $j\not\in\calA_{k}$,  of non-active variables   in the population algorithm.  Condition~(M2) is   illustrated  in Figure~\ref{fig:margincond2} which helps to visualize
 a population entrance criterion $C_{k+1,j}$ in Lemma~\ref{lem:whichtoenter}(ii)  in terms of
weights $\gamma_{k,j}$  from \eqref{eqn:gammak}   as the value of $\gamma$ at which the line $ (\cvec_k)_j + \gamma (\wvec_k)_j$ intersects with either the line $C_k - \gamma A_k$ or the line $C_k + \gamma A_k$.  If an index $j_{k+1}$ should enter the population active set on step $k+1$, condition (M2) states that $\gamma_{k,j_{k+1}}$ be smaller than $\gamma_{k,j}$ for all competing indices $j$ by the margin $\delta A_k^{-1}$. With (M1), this gives a  sufficient condition so that the winner $C_{k+1,j_{k+1}} =C_{k+1}$ at
step $k+1$ of Lemma~\ref{lem:whichtoenter}(ii)
 exceeds all other $C_{k+1,j}$ for competing $j$ by a margin $\delta$, as explained above.  
 
\begin{remark} One interprets conditions (M1) and (M2) to hold whenever vacuous.  For example, if $p=1$, then (M1)-(M2) hold vacuously as there are no variables outside of $\mathcal{A}_1$.        
\end{remark}

\begin{remark} 
As an example, consider an orthogonal design  $\Xmat^T\Xmat = \Imat$ whereby the LAR population path is determined by the vector of marginal ``correlations'' $\cvec_1 = \Xmat^T\bmu$ computed in the first step. In this case, conditions (M1)-(M2) of Theorem \ref{thm:larconsistency} are equivalent to the requirement that the magnitudes of any two non-zero entries of $\cvec_1$ differ by at least $\delta$ and that the smallest non-zero entry of $\cvec_1$ has magnitude at least $\delta$.

% the path $\LAR(\Xmat,\bmu)$ is determined entirely by the vector $\cvec_1 = \Xmat^T\bmu$. In this setting Theorem \ref{thm:larconsistency} conditions (M1) and (M2) are equivalent to requiring $|(\cvec_1)_{j_k}| - |(\cvec_1)_{j_{k+1}}| > \delta$ for $k=1,\dots,m-1$ and $|(\cvec_1)_{j_m}|  - |(\cvec_1)_j| > \delta$ for all $j \notin\{j_1,\dots,j_m\}$.

% where a prototypical LAR path in which the indices $j_1,\dots,j_m$ enter one by one on steps $1,\dots,m$ before the algorithm terminates with $C_{m+1} = 0$ is ensured by $|(\cvec_1)_{j_1}| > \dots >|(\cvec_1)_{j_m}| > 0$ and $(\cvec_1)_j = 0$ for $j \notin\{j_1,\dots,j_m\}$.  Moreover, the conditions (M1) and (M2) of Theorem \ref{thm:larconsistency} are in this setting equivalent to $|(\cvec_1)_{j_k}| - |(\cvec_1)_{j_{k+1}}| > \delta$ for $k=1,\dots,m-1$ and $|(\cvec_1)_{j_m}|  - |(\cvec_1)_j| > \delta$ for all $j \notin\{j_1,\dots,j_m\}$.

\end{remark}

\begin{figure}
\begin{center}
\begin{tikzpicture}[scale=0.7]
  \draw[-,dotted] (-1,0) -- (8,0); \node[below] at (8,0) {$\gamma$};
  % \node[left] at (-.1,0) {$0$};\node[below] at (0,-.1) {$0$};
  \draw[-,dotted] (0,-4) -- (0,4); \node[below] at (7,-.1) {$C_k / A_k$};
  % \draw[-,dotted] (7,-4) -- (7,4);
  \draw[-] (0,3) -- (7,0); \node[left] at (0,3) {$C_k$};
  \draw[-] (0,-3) -- (7,0);\node[left] at (0,-3) {$ - C_k$};
  \fill[gray, opacity = 0.1] (0,-3) -- (7,0) --  (0,3) --  (0,-3);

  % \fill[red, opacity = 0.2] (35/17,-3.5) -- (35/17,3.5) --  (35/17+.2*7/3,3.5) -- (35/17+.2*7/3,-3.5); \node[below] at (35/17+.1*7/3,-3.5) {$\delta A_k^{-1}$};

  \fill[red, opacity = 0.2] (0,36/17) -- (8,36/17) --  (8,36/17-.2) -- (0,36/17-.2); \node[right] at (8,36/17-.1) {$\delta$};

  \fill[red, opacity = 0.2] (35/17,0) -- (35/17,4) --  (35/17+.2*7/3,4) -- (35/17+.2*7/3,0); \node[above] at (35/17+.1*7/3,4) {$\delta A_k^{-1}$};

  % \fill[red, opacity = 0.2] (-.5,3) -- (.5,3) -- (.5,2.6) -- (-.5,2.6);
  % \fill[red, opacity = 0.2] (-.5,-3) -- (.5,-3) -- (.5,-2.6) -- (-.5,-2.6);

  \draw[-,dashed] (0,1/2) -- (7,3); \node[left] at (0,1/2) {$(\cvec_k)_j$};
  % \node[right] at (5.8,2.4) {$(\cvec_k)_{j} - \gamma (\wvec_k)_{j}$};
  \draw[-,dotted] (0,1.636364) -- (35/11,1.636364); \node[left] at (0,1.636364) {$C_{k+1,j}$};
  \draw[-,dotted] (35/11,0) -- (35/11,1.636364); \node[below] at (35/11,0) {$\gamma_{kj}$};

  \draw[-,dashed] (0,-2) -- (3,4); \node[left] at (0,-2) {$(\cvec_k)_{j_{k+1}}$};
  % \node[right] at (2.5,3) {$(\cvec_k)_{j_{k+1}} - \gamma (\wvec_k)_{j_{k+1}}$};
  \draw[-,dotted] (0,36/17) -- (35/17,36/17); \node[left] at (0,36/17) {$C_{k+1,j_{k+1}}$};
  \draw[-,dotted] (35/17,0) -- (35/17,36/17); \node[below] at (35/17,0) {$\gamma_{k,j_{k+1}}$};

\end{tikzpicture}
\end{center}
\caption{Column $\xvec_{j_{k+1}}$ enters on step $k+1$, while $\xvec_j$ does not enter. The dashed lines denote either  $ (\cvec_k)_{j_{k+1}} - \gamma (\wvec_k)_{j_{k+1}}$ or $  (\cvec_k)_j - \gamma (\wvec_k)_j$, while the solid lines denote $  C_k - \gamma A_k$ and $ -C_k + \gamma A_k$.  The horizontal and vertical bands illustrate condition (M2) of Theorem \ref{thm:larconsistency}.}
\label{fig:margincond2}
\end{figure}
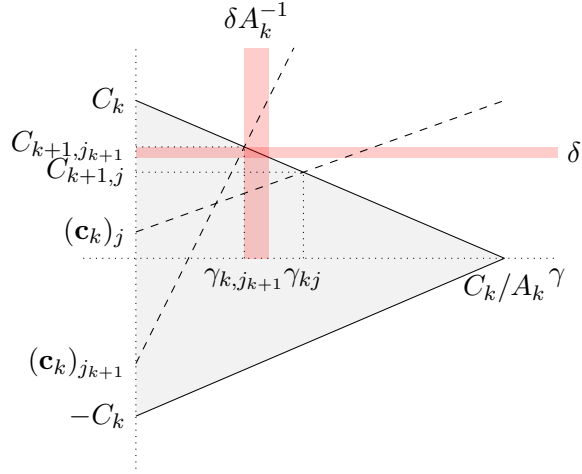

\subsection{Sampling distributions from the LAR sample path}
We next establish sampling  distributions of  LAR sample correlations $\hat{C}_k$ estimating the population step correlations $C_k$.  For  each sample step $k=1,\dots,p$, define the quantities
\begin{equation}
\label{eqn:T}
T_{nk} \equiv \hat s_k\bigg(\frac{1}{\hat A_{k}^{2}} - \frac{1}{\hat A_{k - 1}^{2}}\bigg)^{1/2}\sqrt{n}\big( \hat C_k - C_k \big),
\end{equation}
where the scaling above involves estimated angles $\hat{A}_k$  (with $\hat{A}_0\equiv 0$) along with an estimated sign $\hat{s}_k$ and where we set population counterparts $C_k \equiv 0$ for all $k > m$.  In order to state a result on the joint distribution of $(T_{n1},\dots,T_{np})$ we require some addition notation. In the prototypical population path $\LAR(\Xmat,\bmu)$ involving a sequence of $m\leq p$ columns of $\Xmat$ (collected by the indices $\calA_m \equiv \{j_1,\ldots,j_m\}$)  and corresponding step correlations $C_1>\ldots>C_m>C_{m+1}=0$,  define the following random variables when   the  population path   terminates in   $m<p$ steps: letting $\Rmat$ denote the correlation matrix corresponding to  the covariance matrix $\bSigma$ of Assumption \ref{assum:regassumptions}, define 
   \begin{equation}
\label{eqn:zvec}
\zvec = (Z_j, j \in \{1,\dots,p\}\setminus \calA_m)^T \sim \calN(\zerovec,\sigma^2\Vmat),
\end{equation} 
as a multivariate normal vector in $\mathbb{R}^{p-m}$ with covariance matrix $\Vmat \equiv \Rmat_{\{1,\ldots,p\}\setminus \calA_m|\calA_m}$ corresponding to the conditional correlations of the
$p-m$ nonactive variables in the population path given the $m$ active variables indexed by $\calA_m$.   In addition, for each $j \notin \calA_m$ and collection of distinct indices $\{\pi_1,\dots,\pi_i\} \subset \{1,\dots,p\}\setminus \{\calA_m \cup \{j\}\}$ for $i <p-m$, define
\begin{equation}
\label{eqn:innov}
I_{j}(\zvec) \equiv  \frac{Z_{j}}{\sqrt{\Var (Z_j)}} \quad \text{ and } \quad I_{\{\pi_1,\dots,\pi_i\},j}(\zvec) \equiv \frac{Z_j - \E(Z_j | Z_{\pi_1},\dots,Z_{\pi_i})}{\sqrt{\Var(Z_j|Z_{\pi_1},\dots,Z_{\pi_i})}},
\end{equation}
which denote normal variables $Z_j$ from (\ref{eqn:zvec}) upon standardizing either marginally or conditionally on $Z_{\pi_1},\dots,Z_{\pi_i}$.
% , where $\mathbb{V} Z_j$ and $\mathbb{V}(Z_j|Z_{\pi_1},\dots,Z_{\pi_i})$ respectively denote marginal and conditional variances  and $\E(Z_j | Z_{\pi_1},\dots,Z_{\pi_i})$ denotes a conditional mean.  
Moreover, let $\Pi$ denote the collection of all permutations $\bpi \equiv (\pi_1,\dots,\pi_{m-p})$ of the indices $\{1,\dots,p\}\setminus \calA_m$ and let $\{\calO_{\bpi}, \bpi \in \Pi\}$ be a partition of $\mathbb{R}^{p-m}$ corresponding to values of  $\zvec$ that are compatible with $\bpi$ as a potential ordering   of the $p-m$ variables   that are never active in the population path; see the Appendix for   details. Lastly, let $\mathbb{I}$ denote an indicator function. We can now state a result on the joint distribution of the step correlation quantities $(T_{n1},\ldots,T_{np})$ from (\ref{eqn:T}).

\begin{theorem}
\label{thm:Tasymp}
Under  Theorem \ref{thm:larconsistency} conditions, it holds that 
\[
(T_{n1},\dots,T_{np})\indist (T_1,\dots,T_p)   \quad \mbox{as $n\to\infty$}, 
\] 
where
(i) $(T_1,\dots,T_m)^T \sim \calN(\zerovec,\sigma^2\Imat_m)$; (ii) $(T_1,\dots,T_m)$ is independent of $(T_{m+1},\dots,T_p)$; and (iii) $\sum_{k=m+1}^p T^2_k/\sigma^2 \sim \chi^2_{p-m}$ holds for $(T_{m+1},\dots,T_p)$  equal in distribution to
      \[
      \sum_{\bpi \in \Pi} \Big(I_{\pi_1}(\zvec),I_{\{\pi_{1}\},\pi_{2}}(\zvec) , \dots, I_{\{\pi_{1},\dots,\pi_{p-m-1}\},\pi_{p-m}}(\zvec) \Big)\mathbb{I}(\zvec \in \calO_{\bpi}).
      \] 
\end{theorem}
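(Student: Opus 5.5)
The plan is to reduce every $T_{nk}$ to a normalized projection of the noise onto a Gram--Schmidt innovation. Then read off the limits from (R3) and the continuous mapping theorem.

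\emph{Step 1 (an algebraic identity).} By Remark~\ref{rem:1}, Lemma~\ref{lem:aAupdate}(iii) and Lemma~\ref{lem:Ck} hold for the sample path with hats and $\yvec$ in place of $\bmu$. For the innovations $\hat\evec_k\equiv(\Imat-\hat\Pmat_{k-1})\xvec_{\hat j_k}$ they give
\[
\hat s_k\Big(\frac{1}{\hat A_k^2}-\frac{1}{\hat A_{k-1}^2}\Big)^{1/2}\hat C_k=\frac{\hat s_k\,|1-\hat s_k\xvec_{\hat j_k}^T\hat\avec_{k-1}/\hat A_{k-1}|}{\|\hat\evec_k\|}\cdot\frac{\hat s_k\hat\evec_k^T\yvec}{1-\hat s_k\xvec_{\hat j_k}^T\hat\avec_{k-1}/\hat A_{k-1}}=\frac{\hat\evec_k^T\yvec}{\|\hat\evec_k\|}.
\]
The second equality uses positivity of the denominator and $\hat s_k^2=1$. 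The same identity holds at the population level with $\evec_k$ and $\bmu$ for $k\le m$.

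\emph{Step 2 (the first $m$ components).} Let $E_n$ be the event that $\hat\calA_k=\calA_k$ and $\hat s_k=s_k$ for all $k\le m$. By Theorem~\ref{thm:larconsistency}(i), $P(E_n)\to1$. On $E_n$ the signed active matrices agree, so $\hat A_k=A_k$, $\hat\Pmat_k=\Pmat_k$ and $\hat\evec_k=\evec_k$ exactly. Hence, using $\yvec=\bmu+n^{-1/2}\berror_n$,
\[
T_{nk}=\evec_k^T\berror_n/\|\evec_k\|,\qquad k\le m,
\]
up to an $o_p(1)$ change off $E_n$. The unit vectors $\evec_k/\|\evec_k\|$ are orthonormal and lie in the column space of $\Xmat$. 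So $T_{nk}=\boldsymbol\ell_{nk}^T n^{-1/2}\Xmat_n^T\berror_n$ for coefficient vectors $\boldsymbol\ell_{nk}$ that converge by (R2) to limits $\boldsymbol\ell_k$ satisfying $\boldsymbol\ell_k^T\bSigma\boldsymbol\ell_{k'}=\mathbb{I}(k=k')$. (R3) and Cram\'er--Wold then give (i).

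\emph{Step 3 (the remaining components).} For $k>m$ we have $C_k=0$. By Lemma~\ref{lem:muk}, $\bmu=\Pmat_\Xmat\bmu=\Pmat_m\bmu$, because $\bmu$ lies in the column space of $\Xmat$ under (R4). Every $\hat\evec_k$ with $k>m$ is orthogonal to the columns indexed by $\calA_m$ on $E_n$, so $\hat\evec_k^T\bmu=0$ and $T_{nk}=\hat\evec_k^T\berror_n/\|\hat\evec_k\|$.

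These vectors are the Gram--Schmidt innovations of the inactive columns, taken in the random order $\hat\bpi$ chosen by the sample path after step $m$. They form an orthonormal basis of the column space of $(\Imat-\Pmat_m)\Xmat_{\{1,\dots,p\}\setminus\calA_m}$. Consequently $\sum_{k>m}T_{nk}^2=\berror_n^T(\Pmat_\Xmat-\Pmat_m)\berror_n$, which converges to $\sigma^2\chi^2_{p-m}$ by (R2)--(R3).

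Next write $\zvec_n$ for the vector of $\xvec_j^T(\Imat-\Pmat_m)\berror_n$ over $j\notin\calA_m$, suitably rescaled by the norms $\|\xvec_{nj}\|$. Then $\zvec_n\indist\zvec\sim\calN(\zerovec,\sigma^2\Vmat)$. On the event $\hat\bpi=\bpi$, $T_{n,m+i}$ is exactly the conditionally standardized innovation $I_{\{\pi_1,\dots,\pi_{i-1}\},\pi_i}(\zvec_n)$ computed with the sample Gram matrix, which converges to its $\Rmat$ counterpart.

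Independence (ii) follows because the first $m$ limits are projections onto $\mathrm{span}(\evec_1,\dots,\evec_m)$ and $\zvec_n$ is a projection onto its orthogonal complement. The joint Gaussian limit of $(n^{-1/2}\evec_k^T\berror_n,\zvec_n)$ is therefore uncorrelated, hence independent. The mixture functional is then applied only to the second block.

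\emph{Main obstacle.} The hard step is showing that $\hat\bpi$ is asymptotically a deterministic function of $\zvec_n$ alone, namely $\hat\bpi=\bpi$ iff $\zvec_n\in\calO_{\bpi}$ up to $o_p(1)$. The function must also be continuous off a null boundary, so that the continuous mapping theorem applies to $\sum_{\bpi}(\cdots)\mathbb{I}(\zvec_n\in\calO_{\bpi})$.

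I would first apply Lemma~\ref{lem:whichtoenter}(ii) to the sample path after step $m$. The entrance criteria $\hat C_{k,j}$ are ratios whose numerators $|\xvec_j^T(\Imat-\hat\Pmat_{k-1})\yvec|=n^{-1/2}|\xvec_j^T(\Imat-\hat\Pmat_{k-1})\berror_n|$ are linear in $\zvec_n$ after $\sqrt n$ scaling. The $\bmu$-part vanishes exactly. The denominators depend on $\hat\avec_{k-1}/\hat A_{k-1}$, which is itself built recursively from $\zvec_n$ through Lemma~\ref{lem:aAupdate}(ii); crucially, it is scale-invariant in $\zvec_n$. So the ordering is the LAR ordering run on the ``response'' $\zvec_n$, started from the fixed state $(\avec_m,A_m)$.

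The sets $\calO_\bpi$ are defined by finitely many strict inequalities among such ratios. Their boundaries are contained in zero sets of nontrivial continuous functions, which are Lebesgue-null, hence null under the nondegenerate Gaussian law of $\zvec$ ($\Vmat$ is positive definite by (R2)). Verifying this non-degeneracy carefully, including the sign choices $r_{k-1,j}$, is where I expect most of the work.
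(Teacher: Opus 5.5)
Your proposal is correct and follows essentially the same route as the paper's proof. On the event $\calL_m$ you reduce $T_{nk}$ to $\evec_k^T\berror_n/\|\evec_k\|$ for $k\le m$ and to conditionally standardized Gram--Schmidt innovations of $\zvec_n$ for $k>m$, encode the post-$m$ entry order by argmax sets built from $|J(\tvec)|/(1-\sign(J(\tvec))a)$ (the paper's $\calE$'s and $\calO_{n,\bpi}$), and conclude by the extended continuous mapping theorem, getting (ii) from zero covariance and the $\chi^2_{p-m}$ claim from the projection onto the column space of $(\Imat-\Pmat_m)\Xmat$.

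Your treatment of (i)--(ii), via explicit linear functionals of $n^{-1/2}\Xmat_n^T\berror_n$ and joint Gaussianity, is actually more careful than the paper's. Two phrasing fixes for the ordering step: state it with the $n$-dependent sets $\calO_{n,\bpi}$ (exact on $\calL_m$) converging to $\calO_{\bpi}$, rather than ``$\zvec_n\in\calO_{\bpi}$ up to $o_p(1)$''. And justify the null boundaries through the piecewise-linear (hyperplane) structure of the criteria, not through a general claim that zero sets of continuous functions are Lebesgue-null, which is false as stated.
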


Theorem~\ref{thm:Tasymp} characterizes the distinct distributional  behaviors  in    estimated step correlation quantities $\{T_{nk}\}_{k=1}^p$ in (\ref{eqn:T}) from LAR, where a dichotomy arises depending on whether sample step correlations are targeting non-zero population  step correlations or not.  
Remarkably, despite the fact that   data steps in a LAR sample path are interrelated, all estimates $\hat{C}_k$ of non-zero population correlations $C_k$, for $1 \leq k\leq m$,  turn out to be independent and identically distributed normal variables in large samples, while also being independent of all remaining estimates $\hat{C}_k$, $m+1 \leq k\leq p $ when $m<p$
(i.e., Theorem~\ref{thm:Tasymp}(i)-(ii)). The explanation for this is that the sample LAR path is consistent for the variable order and signs in the LAR population path (Theorem~\ref{thm:larconsistency});  when these are correctly identified,    we may use \eqref{eqn:Ckangles} along with \eqref{eqn:Aupdate}  to write
\[
T_{nk} =\frac{\evec_k^T\berror_n}{\|\evec_k\|} \quad\mbox{for $k=1,\ldots,m$}  
\]
using orthogonal vectors $\evec_1,\ldots,\evec_m$   from (\ref{eqn:in}), which  are then asymptotically  $\calN(0,\sigma^2)$ distributed under Assumption \ref{assum:regassumptions}.  On the other hand, if there are $p$ columns of $\Xmat$ but the LAR population path terminates in $m<p$ steps, then the last $p-m$ steps of the LAR sample path are essentially competing to estimate noise: all remaining estimated correlations are targeting zero with no population ordering of the corresponding non-active $p-m$  variables, so the sample path    admits these remaining variables in an order selected randomly from among all possible orders.     As a result,  the asymptotic distribution of the collection $\{T_{nk}\}_{k=m+1}^p$ when $m<p$ 
has a non-normal joint distribution with a complicated dependence structure in Theorem~\ref{thm:Tasymp}(iii), which can be viewed as relating to order statistics; see Remark~\ref{rem:Tkasymp}.

\begin{remark}
\label{rem:Tkasymp}
Some observations about Theorem~\ref{thm:Tasymp}:
\begin{enumerate}
    \item When the columns of $\Xmat$ are orthonormal, that is when $\Xmat^T\Xmat = \Imat$, the limiting  variables $(T_{m+1},\dots,T_p)$ (i.e., corresponding to last $p-m$ steps of the sample LAR path) are equal in distribution to 
      \[
      \sum_{\bpi \in \Pi} (Z_{\pi_1},Z_{\pi_2}, \dots, Z_{\pi_{p-m}})\mathbb{I}(|Z_{\pi_1}| > |Z_{\pi_2}| > \dots > |Z_{\pi_{p-m}}|),
      \]
    where $Z_{m+1},\dots,Z_p$ are independent $\calN(0,\sigma^2)$ random variables.
    \item Due to Slutsky's theorem, 
     all statements hold with $\sigma=1$ 
     upon replacing   ``$T_{n1},\dots,T_{np}$'' with ``$T_{n1}/ \hat \sigma_n,\dots,T_{np}/\hat \sigma_n$'', using   $\Pmat_\Xmat$ as the projection matrix of $\Xmat$ to write  
    \begin{equation}
    \label{eqn:varest}
    \hat \sigma_n^2 = (n-p)^{-1} \yvec_n^T(\Imat - \Pmat_\Xmat)\yvec_n.
    \end{equation}

\item Despite the complex dependence among $\{T_{nk}\}_{k=m+1}^p$ when $m<p$, it holds in large samples that  $\sum_{k=m+1}^{p}T_{nk}^2/\hat\sigma^2_n$ is $\chi_{p-m}^2$-distributed and $\sum_{k=1}^{p}T_{nk}^2/\hat\sigma^2_n$ is $\chi_{p}^2$-distributed.
\end{enumerate}

\end{remark}

\subsection{Formally estimating  the termination point for the LAR sample path}

We conclude this section by introducing an estimator $\bar m$ of the number $m$ of non-zero step correlations $C_k$ in a prototypical LAR population path as in Definition \ref{def:1}. Such an estimator is useful for deciding when terminate the sample LAR algorithm in practice and has been formally missing. To construct the estimator, we define the quantities $\hat W_{n,k} \equiv n(\hat A_k^{-2} - \hat A_{k-1}^{-2})\hat C_k^2/\hat \sigma_n^2$ for $k=1,\dots,p$, which resemble squares of the step correlation quantities $T_{nk}$ from (\ref{eqn:T}) when assuming each population   $C_k=0$  and then scaling by $\hat \sigma_n^2$ from (\ref{eqn:varest}). We then define tail sum statistics 
\begin{equation}
\label{eqn:Sk}
\hat S_{n,k} \equiv \sum_{j=k}^p\hat W_{n,j},\quad k=1,\dots,p.
\end{equation}
By Theorem~\ref{thm:Tasymp} (cf.~3.~of Remark \ref{rem:Tkasymp}), note that $\hat S_{n,m+1}$ at an index $k=m+1$ in (\ref{eqn:Sk}) must approximately have  a central $\chi^2_{p-m}$ distribution when $m<p$ (even if $m=0$).  On the other hand,  for any  given smaller index $k=1,\ldots,m$, the sum $\hat{S}_{n,k}$ will approximately have  a non-central $\chi^2_{p-k+1}(\varphi_{n,k})$ distribution with a non-centrality parameter $\varphi_{n,k} \equiv \sum_{j=k}^{m}  n(A_j^{-2} - A_{j-1}^{-2})C_j^2/\sigma^2$ that diverges   as $n\to \infty$.     
Our estimator of $m$, to follow, attempts to locate the largest index $k$ at which $\hat S_{n,k}$ appears to exhibit a non-central distribution.
  If $\hat S_{n,1} < \chi^2_{p,n^{-1}}$,  we set $ \bar m \equiv  0$ and, otherwise,   we define
\begin{equation}
\label{eqn:mest}
\bar m \equiv  \max\big\{m' \in \{1,\dots,p\}: ~ \hat S_{n,k}>\chi^2_{p-k+1,n^{-1}} ~ \text{ for all  } k = 1,\dots,m'\big\},
\end{equation}
where $\chi^2_{k,n^{-1}}$ denotes the upper $n^{-1}$ quantile of the (central) $\chi^2_k$ distribution.   The estimator $\bar{m}$ is illustrated graphically in Section~\ref{sec:illu}, while Theorem~\ref{thm:mest}  establishes formal consistency.

\begin{theorem}
\label{thm:mest}
Under the Theorem \ref{thm:larconsistency} conditions, $P(\bar m = m)\to 1$ as $n \to \infty$.    
\end{theorem}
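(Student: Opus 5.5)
The event $\{\bar m = m\}$ is implied by the following conditions, writing $q_{d} \equiv \chi^2_{d,n^{-1}}$ and using that the statistics $\hat S_{n,k}$ are decreasing in $k$ as tail sums of nonnegative $\hat W_{n,j}$:
\begin{enumerate}[label=(\alph*)]
\item $\hat S_{n,k} > q_{p-k+1}$ for every $k=1,\dots,m$;
\item when $m<p$, $\hat S_{n,m+1} \le q_{p-m}$.
\end{enumerate}
If $m=0$, only (b) is needed, applied to $\hat S_{n,1}$, which gives $\bar m=0$. If $m=p$, only (a) is needed, which forces $\bar m = p$. So the plan is to show $P(\text{(a)})\to 1$ and $P(\text{(b)})\to 1$ separately.

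For (a), I first note a bound on the quantile. The $\chi^2_d$ upper tail satisfies $P(\chi^2_d > x) \le C_d x^{d/2} e^{-x/2}$ for large $x$. Hence $q_d \le c\log n$ for some constant $c$ depending on $d\le p$, so $q_d = O(\log n) = o(n)$.

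Next, fix $k\le m$. Since all $\hat W_{n,j}\ge 0$, we have $\hat S_{n,k} \ge \hat W_{n,k} = n(\hat A_k^{-2}-\hat A_{k-1}^{-2})\hat C_k^2/\hat\sigma_n^2$. I then establish three limits:
\begin{itemize}
\item $\hat C_k \inprob C_k>0$, by Theorem~\ref{thm:larconsistency}(ii).
\item $\hat\sigma_n^2 \inprob \sigma^2$, by the standard argument under (R1)--(R3). Write $\yvec_n^T(\Imat-\Pmat_\Xmat)\yvec_n = \berror_n^T\berror_n - \berror_n^T\Pmat_\Xmat\berror_n$. The second term is $O_p(1)$ by (R2)--(R3).
\item $\hat A_k^{-2}-\hat A_{k-1}^{-2} \inprob A_k^{-2}-A_{k-1}^{-2}>0$. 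On the event of Theorem~\ref{thm:larconsistency}(i), whose probability tends to 1, the signed active matrices agree: $\hat\Xmat_k=\Xmat_k$ for all $k\le m$. Then $\hat A_k = A_k$ exactly, since the angles depend only on $\Xmat_k$. Strict positivity of the limit follows from Lemma~\ref{lem:aAupdate}(iii), because $u_k>0$ and $\evec_k\neq\zerovec$.
\end{itemize}
The population quantities $A_k$ and $C_k$ depend on $n$ through $\Xmat$ and $\bmu$. By (R2) and (R4), however, $\Xmat^T\Xmat\to\Rmat$ and $\Xmat^T\bmu$ converges, so these population quantities converge to limits that are strictly positive under the margin assumptions. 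Combining, $\hat W_{n,k}/n$ is bounded below by a positive constant with probability tending to 1. Therefore $\hat S_{n,k}\ge \hat W_{n,k} \gg \log n \gtrsim q_{p-k+1}$ with probability tending to 1, and (a) follows by a union bound over the finitely many $k$.

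For (b), let $m<p$. Under the null scaling $C_k=0$ for $k>m$, we have $\hat W_{n,k} = T_{nk}^2/\hat\sigma_n^2$. Theorem~\ref{thm:Tasymp} together with Remark~\ref{rem:Tkasymp}.3 gives $\hat S_{n,m+1}=\sum_{k=m+1}^pT_{nk}^2/\hat\sigma_n^2 \indist \chi^2_{p-m}$, so $\hat S_{n,m+1}=O_p(1)$. Since $q_{p-m}\to\infty$, it follows that $P(\hat S_{n,m+1}>q_{p-m})\to0$.

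The main point needing care is step (a). One has to check that the population angle gap $A_k^{-2}-A_{k-1}^{-2}$ and the correlation $C_k$ stay bounded away from zero uniformly in $n$, rather than merely being positive for each $n$. I would obtain this from the limiting design $\Rmat$ in (R2) and the margin $\delta$ in (M1). For (b), convergence in distribution to a fixed law is enough, since the threshold diverges; no uniformity of the $n^{-1}$ level is needed.
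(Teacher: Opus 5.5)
Your proposal is correct and takes essentially the paper's route. In both, a signal term $\hat W_{n,k}$ of order $n$ beats a threshold $\chi^2_{p-k+1,n^{-1}} = O(\log n)$; this uses $\hat A_k = A_k$ on $\calL_m$, with the angle gap and $C_k$ bounded below via (M1). Meanwhile $\hat S_{n,m+1}$ has a $\chi^2_{p-m}$ limit and so falls below its diverging threshold.

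The differences are cosmetic. The paper reduces your condition (a) to the single event $\hat S_{n,m} > \chi^2_{p,n^{-1}}$ using monotonicity of $\hat S_{n,k}$ in $k$ and of the quantiles in the degrees of freedom. It bounds $\hat W_{n,m}$ through the stochastically bounded $T_{nm}$ rather than through consistency of $\hat C_k$. It cites Inglot's quantile inequality instead of your tail estimate, and it uses Polya's theorem where your tightness argument already suffices.
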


\section{Practical inference from LAR}
\label{sec:practicalinference}

We next consider practical applications of the results in Section \ref{sec:Tk}.  We begin by introducing a typical graphical depiction of the LAR steps, similar to that presented in Figure 3 of \cite{efron2004least}.  Figure \ref{fig:facetemp_lar} depicts the sample path $\LAR(\Xmat,\yvec)$ from a data set with $n=933$ observations of a response variable and $p=18$ predictor variables from a study on body temperature measurements  \cite{wang2023facial}  (see Section \ref{sec:facetemp} for more details).
The left panel in Figure~\ref{fig:facetemp_lar} shows the absolute correlations $|(\hat \cvec_k)_j|$ for   the variable indices $j=1,\dots,18$ across all steps of the algorithm $k=1,\dots,18$. In each step, we observe that a new variable joins the active set $\hat \calA_k$ as its absolute correlation $|(\hat \cvec_k)_j|$ becomes equal to the current maximum absolute correlation or {\it step correlation} $\hat C_k$, which decreases in each step. The right panel in Figure~\ref{fig:facetemp_lar} plots what we will call   \textit{step coefficients} from the sample path $\LAR(\Xmat,\yvec)$.  These are the non-zero entries of  $p\times 1$ vectors $\hat \bvec_k$ for expressing the LAR step prediction $\hat\yvec_k$  in Algorithm~\ref{alg:lar_efron} as  a linear combination 
$\Xmat \hat \bvec_k = \hat \yvec_k$  of regressor variables at each $k= 1,\dots,p$.  Setting $\hat \bvec_0 = \zerovec$,  we may obtain $\hat \bvec_k$ for each step $k=1,\dots,p$ by the recursion
\begin{figure}
    \centering
    \includegraphics[width=0.9\linewidth]{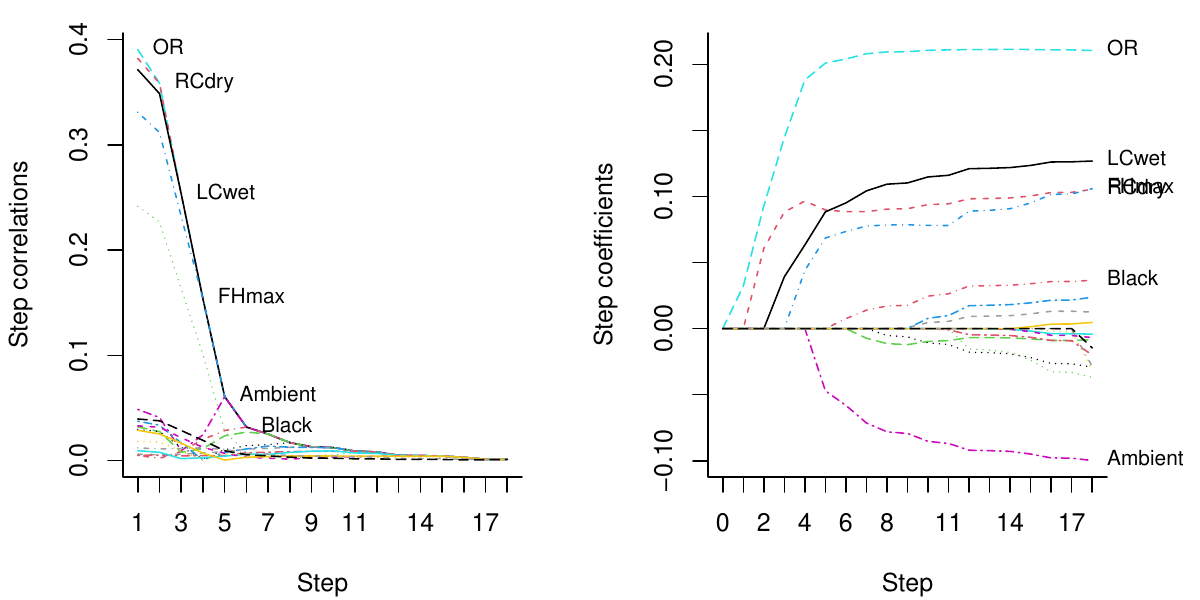}
    \caption{Sample path $\LAR(\Xmat,\yvec)$ on face temperature data described in Section \ref{sec:facetemp}.}
    \label{fig:facetemp_lar}
\end{figure}
\begin{equation}
\label{eqn:bk}
(\hat \bvec_k)_{\hat \calA_k} = (\hat \bvec_{k-1})_{\hat \calA_k} + \hat \gamma_k (\Xmat_{\hat \calA_k}^T\Xmat_{\hat \calA_k})^{-1}\Xmat^T_{\hat \calA_k} \hat \avec_k \quad \text{ and } \quad (\hat \bvec_k)_j = 0 \text{ for } j \notin \hat \calA_k.
\end{equation}
In the right panel of Figure \ref{fig:facetemp_lar}, a new variable begins contributing to the predictor $\hat \yvec_k$ when its step coefficient becomes non-zero. From both panels, the first five or six steps intuitively appear as most important: 
after these steps, the  correlations $\hat C_k$ seem to be relatively  small, as do the coefficients of   newcomer-variables  to  linear predictors $\hat \yvec_k$.  Only  those variables entering on the first six steps have been labeled  in Figure \ref{fig:facetemp_lar}, as we estimate $\bar m = 6$  based on \eqref{eqn:mest}   for these data; Section \ref{sec:facetemp} gives more details.

In the following, we consider how to use the results from Section \ref{sec:Tk} to make inference about step correlations $C_k$ and   step coefficients $\bvec_k$  from LAR population path $\LAR(\Xmat,\bmu)$ using estimates, as illustrated above, given by the LAR sample path $\LAR(\Xmat,\yvec)$.

\subsection{Bootstrap for step correlations}
\label{sec:stepcorrelations}

We first consider an approach to inference on the population step correlations $C_k$ based on bootstrap, where the latter is practically important here 
due to the complicated distributions of estimators in Theorem~\ref{thm:Tasymp}.  
  Given an observed response $\yvec$ and matrix $\Xmat$,   
a bootstrap version
$\berror^*$ of the $n\times 1$  error vector $\berror $ in linear regression models is commonly defined by  drawing $n$ entries uniformly and with replacement from centered/scaled   residuals  $( \hat \berror - n^{-1} \onevec^T \hat \berror)/\sqrt{n/(n-p)}$ given by least squares estimation $\hat \berror \equiv (\Imat - \Pmat_\Xmat)\yvec$ and, under weak conditions,
 bootstrap errors $\berror_n^*\equiv n^{1/2}\berror^* $ also exhibit the same distributional behavior given in (R.3) of Assumption~\ref{assum:regassumptions} for the original errors $\berror_n \equiv n^{1/2} \berror$ 
(cf.~\cite{freedman1981bootstrapping,bickel1983bootstrapping,mammen2012does}), which we also assume here (cf.~Theorem~\ref{thm:bootstrap}).  However, the definition of bootstrap responses   $\yvec^* \equiv \bar \bmu + \berror^*$ critically requires a bootstrap version $\bar \bmu$ of the population  mean $\bmu$.   Typically in residual bootstrap, the role of the bootstrap mean  $\bar \bmu$ is played by the least squares estimator $\hat{\yvec} \equiv \Pmat_\Xmat\yvec$, though this standard  choice will   generally  {\it fail} 
for inference about     step correlations; see also Remark~\ref{rem:mbar}.  The reason for this failure is that 
the least squares estimator 
$\hat{\yvec}$ does not mimic  the possibility in the bootstrap world that  some population correlations may be zero, where the latter aspect impacts the joint distribution     of the  
 estimated entrance   correlations (Theorem~\ref{thm:Tasymp}).   
 Hence, the residual bootstrap must be appropriately modified in order to be valid for LAR step correlations.

To this end, given an estimate $\bar m$ of the number $m$ of non-zero population correlations  from \eqref{eqn:mest}, define the bootstrap mean as $\bar \bmu  \equiv \hat \Pmat_{\bar m} \yvec$ in bootstrap responses
  $\yvec^* \equiv \bar \bmu + \berror^*$,
  where $\hat \Pmat_{\bar m}$ is the orthogonal projection defined by the columns $\{\xvec_j, j \in \hat \calA_{\bar m}\}$ active in the sample path at step $\bar m$. Additionally, define bootstrap versions of population  step correlations as $\bar C_k \equiv \hat C_k \mathbb{I}(k \leq \bar m)$, using an indicator function $\mathbb{I}$ to threshold the original $\LAR(\Xmat,\yvec)$   estimators $\hat{C}_k$ to zero for indices $k>\bar{m}$.
  We then   estimate the joint distribution of the studentized quantities $(\hat T_{n1},\dots,\hat T_{np})   \equiv (T_{n1},\dots,T_{np})/\hat{\sigma}_n$ 
  based on  $(T_{n1},\dots,T_{np})$ from (\ref{eqn:T}) and $\hat \sigma_n^2$ from (\ref{eqn:varest}) with a bootstrap counterpart $(\hat T^*_{n1},\dots,\hat T^*_{np})$ defined by 
\[
\hat T_{nk}^* \equiv \hat s_k^*\bigg(\frac{1}{\hat A_{k}^{*2}}- \frac{1}{\hat A_{k - 1}^{*2}}\bigg)^{1/2}\sqrt{n}\big( \hat C^*_k - \bar C_k \big)/\hat \sigma^*_n,\quad k = 1,\dots,p,
\]
where $\hat{C}_k^*$, $\hat{s}_k^*$, and $\hat A_k^*$ for $k =1,\dots,p$ (with $\hat A_0^* \equiv\infty$)  respectively denote step correlations, associated signs, and angles from the bootstrap sample path $\LAR(\Xmat,\yvec^*)$ with $\yvec^*\equiv \hat \Pmat_{\bar m} \yvec+ \berror^*$ and where $\hat{\sigma}_n^{*2} \equiv (n-p)^{-1} \yvec_n^{*T}(\Imat - \Pmat_\Xmat)\yvec_n^*$ for $\yvec_n^* \equiv n^{1/2} \yvec^*$. 
The following 
result formally establishes 
that the proposed bootstrap is valid for $(\hat T_{n1},\dots,\hat T_{np})$, despite the quite
complicated and potentially non-normal distribution of the latter.

\begin{theorem}
\label{thm:bootstrap}
Under the conditions of Theorem \ref{thm:larconsistency} with $\bar m$ as in \eqref{eqn:mest}
and  assuming bootstrap errors $\berror_n^*$
satisfy Assumption~\ref{assum:regassumptions}(R.3) in probability, the bootstrap procedure is consistent for the distribution of estimated step correlations:
\[
\sup_{\calB \in \calB(\mathbb{R}^p)} \Big|\PP_*\big((\hat T^*_{n1},\dots,\hat T^*_{np}) \in \calB\big) -  \PP\big((\hat T_{n1},\dots,\hat T_{np})\in \calB\big ) \Big| \inprob 0 \quad \mbox{as $n\to \infty$},
\] 
where $\PP_*$ denotes bootstrap probability.
\end{theorem}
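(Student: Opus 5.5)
The plan is to transfer Theorems~\ref{thm:larconsistency}, \ref{thm:Tasymp} and \ref{thm:mest} to the bootstrap world. The bootstrap world is itself a linear model $\yvec_n^* = \Xmat_n\bar\bbeta_n + \berror_n^*$ with mean $\bar\bmu = \hat\Pmat_{\bar m}\yvec$, and the bootstrap errors satisfy (R3) in probability. Three things must then be shown: the bootstrap population path $\LAR(\Xmat,\bar\bmu)$ is prototypical, it satisfies (M1)--(M2) with high probability, and its limit law coincides with the one in Theorem~\ref{thm:Tasymp}.

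\emph{Step 1: the bootstrap population path.} By Theorem~\ref{thm:mest}, $P(\bar m=m)\to1$. By Theorem~\ref{thm:larconsistency}(i), with probability tending to one the sample path enters $j_1,\dots,j_m$ in order with signs $s_1,\dots,s_m$. On this event $\hat\Pmat_{\bar m}=\Pmat_m$, and $\bar\bmu=\Pmat_m\yvec=\Pmat_m\bmu+\Pmat_m\berror$. Lemma~\ref{lem:muk} gives $\Pmat_m\bmu=\bmu_m=\Pmat_\Xmat\bmu$, so $\bar\bmu-\bmu=\Pmat_m\berror=O_p(n^{-1/2})$.

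Next run Algorithm~\ref{alg:lar} on $\bar\bmu$. Every quantity ($\cvec_k$, $A_k$, $\wvec_k$, $\gamma_{k,j}$, $C_k$) is a continuous function of the mean, locally around a prototypical path satisfying the strict margins (M1)--(M2). This is the same perturbation argument used in the proof of Theorem~\ref{thm:larconsistency}, now with a deterministic-given-data perturbation. Hence $\LAR(\Xmat,\bar\bmu)$ enters $j_1,\dots,j_m$ with the same signs and has correlations $\bar C_k'\to C_k$. It satisfies (M1)--(M2) with margin $\delta/2$ for large $n$. It terminates exactly at step $m$ because $\bar\bmu\in\mathrm{span}(\Xmat_m)$: $\Xmat^T(\bar\bmu-\Pmat_m\bar\bmu)=0$ gives $C_{m+1}=0$.

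Two further checks are needed. First, the bootstrap centering $\bar C_k=\hat C_k$ must equal the bootstrap population correlation for $k\le m$. This holds because Remark~\ref{rem:1} and Lemma~\ref{lem:Ck} give $\hat C_k = s_k\evec_k^T\yvec/(1-s_k\xvec_{j_k}^T\hat\avec_{k-1}/\hat A_{k-1})$. On the correct-path event, $\hat\avec_{k-1}$ and $\hat A_{k-1}$ depend only on $\Xmat$ and the active signed columns, so they equal the population quantities, and $\evec_k^T\yvec=\evec_k^T\bar\bmu$ for $k\le m$. Thus $\hat C_k$ is exactly the $k$th step correlation of $\LAR(\Xmat,\bar\bmu)$. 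Second, for $k>m$ we have $\bar C_k=0$, which matches the bootstrap population correlation.

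\emph{Step 2: bootstrap limit.} Conditionally on the data, on the event above, rerun the proof of Theorem~\ref{thm:Tasymp} with $\bmu$ replaced by $\bar\bmu$ and $\berror$ replaced by $\berror^*$. Two inputs are required. The first is (R3) for $\berror_n^*$ in probability. The second is uniformity of Theorems~\ref{thm:larconsistency}--\ref{thm:Tasymp} over population paths in a shrinking neighbourhood with fixed margins. I expect that proof depends on $\bmu$ only through the event that the ordering and signs are resolved, which is controlled by the margins, and through the representation $T_{nk}=\evec_k^T\berror_n/\|\evec_k\|$ for $k\le m$ together with the order-statistic construction for $k>m$. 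Both depend on $\Xmat$, $m$ and $\calA_m$, which are unchanged. The consequence is $(T^*_{n1},\dots,T^*_{np})\indist(T_1,\dots,T_p)$ in probability, with the same law as in Theorem~\ref{thm:Tasymp}. Also $\hat\sigma_n^{*2}\to\sigma^2$ in probability by (R3)$^*$, because $(\Imat-\Pmat_\Xmat)\yvec_n^*=(\Imat-\Pmat_\Xmat)\berror_n^*$. Slutsky then yields the studentized limit, which matches Remark~\ref{rem:Tkasymp}(2) for $(\hat T_{nk})$.

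\emph{Step 3: upgrade to total variation.} Convergence in distribution does not immediately give the supremum over all Borel sets. The limit law is a finite mixture: the $N(0,\Imat_m)$ part is independent of a $(p-m)$-vector obtained as $\zvec\mapsto$ standardized innovations on the cells $\calO_{\bpi}$. This map is piecewise smooth and invertible, so the limit has a Lebesgue density, and Pólya's theorem gives uniformity only over convex or rectangular sets. For the full Borel supremum, I would instead first couple both $(\hat T_{nk})$ and $(\hat T^*_{nk})$ with probability tending to one to the same continuous map $g$ of the normalized scores $\Xmat^T\berror_n/\sqrt n$ (respectively the starred scores), via the representations above. That reduces the problem to total-variation closeness of these score vectors.

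This last step is the main obstacle. Plain (R3) yields only weak convergence, so the Borel supremum likely needs either a local-limit/density strengthening implicit in the assumptions or an interpretation of $\calB(\mathbb{R}^p)$ as sets with limit-null boundaries. I would try first to argue via the continuity sets of the absolutely continuous limit, combined with the triangle inequality through the common limit.
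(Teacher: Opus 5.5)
Your Steps~1--2 essentially reproduce the paper's argument, with one weak point in Step~2. The shared argument runs as follows. On $\calL_m\cap\{\bar m=m\}$ one has $\bar\bmu=\Pmat_m\yvec=\bmu+\Pmat_m\berror$, and rerunning the consistency proof shows that $\LAR(\Xmat,\bar\bmu)$ admits $j_1,\dots,j_m$ with signs $s_1,\dots,s_m$. Lemma~\ref{lem:Ck} together with $\evec_k^T\bar\bmu=\evec_k^T\yvec$ then gives $\bar C_k=\hat C_k$. Next, $\Xmat^T(\Imat-\Pmat_m)\bar\bmu=\zerovec$ forces termination at step $m$. Finally, the proof of Theorem~\ref{thm:Tasymp} is rerun with $\berror^*$, with Slutsky applied to $\hat\sigma_n^*$.

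The weak point is your appeal to a uniform version of Theorems~\ref{thm:larconsistency}--\ref{thm:Tasymp} over shrinking neighbourhoods of population paths, which you do not prove. The paper avoids this with a subsequence device. Every subsequence has a further subsequence and a probability-one event on which, for each fixed $\omega$:
\begin{itemize}
\item eventually $\bar m=m$, and both $\LAR(\Xmat,\yvec)$ and $\LAR(\Xmat,\bar\bmu)$ follow $j_1,\dots,j_m$ with the right signs;
\item $\bar C_k=\hat C_k$, $\bar\cvec_k\to\cvec_k$ and $\hat\sigma_n\to\sigma$;
\item the bootstrap errors satisfy (R3).
\end{itemize}
For such $\omega$ the bootstrap world is an ordinary regression model satisfying (M1)--(M2) with a smaller margin, so the existing proofs apply verbatim. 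You should use this device instead.

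The genuine gap is Step~3. Your worry there is well founded, but neither proposed repair can succeed, because the statement is false for the residual bootstrap when $\calB(\mathbb{R}^p)$ is the full Borel $\sigma$-field.
\begin{itemize}
\item Given the data, $\berror^*$ takes at most $n^n$ values, so $\PP_*$ is supported on a finite set $S$.
\item With Gaussian errors, which Assumption~\ref{assum:regassumptions} allows, $\hat T_{n1}=\hat s_1\sqrt n(\max_j|\xvec_j^T\yvec|-C_1)/\hat\sigma_n$ has no atoms.
\item Hence $\PP((\hat T_{n1},\dots,\hat T_{np})\in S)=0$, and the supremum equals $1$ for every $n$.
\end{itemize}
The same example rules out your coupling to total-variation closeness of score vectors, since the starred scores are discrete. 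Restricting to continuity sets of the limit does not help either. A finite set is a continuity set of an absolutely continuous law, so that class is not a uniformity class.

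The paper's proof has the same hole. Having obtained weak convergence of $(\hat T^*_{n1},\dots,\hat T^*_{np})$ to a continuous limit, it passes to the Borel supremum with the words ``in other words''. That is not a valid implication.

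What Steps~1--2 actually prove is the theorem over a uniformity class. The limit $(T_1,\dots,T_p)/\sigma$ is absolutely continuous:
\begin{itemize}
\item $(T_1,\dots,T_m)$ is normal and independent of the tail block;
\item the tail block is a finite mixture, over $\bpi\in\Pi$, of images of the nondegenerate normal $\zvec$ restricted to $\calO_{\bpi}$;
\item these images are taken under the invertible lower-triangular linear maps $\zvec\mapsto(I_{\pi_1}(\zvec),I_{\{\pi_1\},\pi_2}(\zvec),\dots)$.
\end{itemize}
Ranga Rao's (1962) theorem then upgrades weak convergence to uniform convergence over all convex Borel sets. This applies both to $(\hat T_{nk})$ and to $(\hat T^*_{nk})$ for fixed $\omega$ along the sub-subsequences. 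The triangle inequality through the common limit then gives the result with $\calB(\mathbb{R}^p)$ replaced by the convex sets, or by orthants via the multivariate P\'olya theorem. This weaker form is all that the intervals $\hat I_k$ require.
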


We may then construct an approximate $(1-\alpha)100\%$ bootstrap confidence interval for population step correlation $C_k$  as follows for   $k=1,\dots,p$.  Define an  interval  as
\begin{equation}
\label{eqn:Ik}
\hat I_k \equiv \left\{\begin{array}{rl}
     [\hat C_k - \hat T_{nk}^*(1-\alpha/2)\hat q_k, \hat C_k - \hat T_{nk}^*(\alpha/2)\hat q_k ],~& \hat s_k = 1  \\[2pt]  
    ~[\hat C_k - \hat T_{nk}^*(\alpha/2)\hat q_k, \hat C_k - \hat T_{nk}^*(1-\alpha/2)\hat q_k ],~& \hat s_k = -1, \\  
\end{array}\right.
\end{equation}
where $\hat q_k \equiv  \hat s_k(\hat A_k^{-2} - \hat A_{k-1}^{-2})^{1/2}\hat \sigma_n/\sqrt{n}$ and $\hat T_{nk}^*(1-\alpha/2)$ and $\hat T_{nk}^*(\alpha/2)$ denote the upper and lower $\alpha/2$ quantiles, respectively, of the bootstrap distribution of $\hat T_{nk}^*$.  Because  $C_k$ is non-negative, any negative lower endpoint  of $\hat{I}_k$ may be replaced by zero.   The intervals $\hat I_k$, $k=1,\dots,p$, will have asymptotically correct coverage by  Theorem~\ref{thm:bootstrap}.

\subsection{Bootstrap for step coefficients}
\label{sec:bootstepcoefficients}

% Given the sequence of sample path active sets $\hat \calA_1,\dots,\hat \calA_p$, define the quantities
In  a prototypical $\LAR(\Xmat,\bmu)$ population path from Definition \ref{def:1},  there exists a series of $p\times 1$ vectors  $\bvec_k$, defined by the steps $k=1,\ldots,m$ 
of Algorithm~\ref{alg:lar},
whereby the LAR population approximation $\bmu_k = \Xmat \bvec_k $ of the response mean $\bmu$ at step $k$ can be written as a linear combination  of the $p$ columns of the design matrix $\Xmat$.  The vectors $\bvec_k$, $k=1,\ldots m$, represent population {\it step coefficients}, whereby  each $\bvec_k$ has $k$ non-zero elements.  We may express $\bvec_k$ recursively over population steps $k=1,\ldots,m$ as  
\begin{equation}
\label{eqn:bk2}
( \bvec_k)_{ \calA_k} = (  \bvec_{k-1})_{  \calA_k} +   \gamma_k (\Xmat_{  \calA_k}^T\Xmat_{ \calA_k})^{-1}\Xmat^T_{  \calA_k}   \avec_k \quad \text{ and } \quad ( \bvec_k)_j = 0 \text{ for } j \notin  \calA_k
\end{equation}
using $\bvec_0 \equiv \mathbf{0}$. We   define     $\bvec_k \equiv \bvec_m$ for  $k \geq m$ as there are no population steps beyond  $m \leq p$.  

The sample step coefficients $\hat{\bvec}_k$, given in (\ref{eqn:bk}) based on the $\LAR(\Xmat,\yvec)$ sample path,  can now be seen to  estimate population step coefficients $\bvec_k$ in 
 (\ref{eqn:bk2}) from the 
 $\LAR(\Xmat,\bmu)$ population path. Based on an estimated
number $\bar m$ of non-zero population correlations  from \eqref{eqn:mest}, the most natural estimator of the ``terminal'' vector of population step coefficients $\bvec_m$ is not the sample path step coefficient vector $\hat \bvec_k$ at $k=\bar m$, but rather the vector $\bar \bvec_{\bar m}$ (say) such that $\Xmat \bar \bvec_{\bar m}= \hat \Pmat_{\bar m}\yvec $, which has entries given by $(\bvec_{\bar m})_{\calA_{\bar m}} = (\Xmat_{\calA_{\bar m}}^T\Xmat_{\calA_{\bar m}})^{-1}\Xmat_{\calA_{\bar m}}^T \yvec$ and $(\bvec_{\bar m})_j = 0$ for $j \notin\calA_{\bar m}$.
The vector $\bar \bvec_{\bar m}$ is the step coefficient vector resulting from using $\hat \gamma_{\bar m} = \hat C_{\bar m} / \hat A_{\bar m}$ at step $\bar m$, which updates the prediction $\hat \yvec_{\bar m - 1}$ to the projection $\hat \Pmat_{\bar m}\yvec$ (cf. Proposition \eqref{prop:LARknown}(iv)). This is the action one would take if one knew that the population step correlations $C_k$ were equal to zero beyond step $\bar m$.
% corresponds to the projection update with $\hat \gamma_{\bar m} = \hat C_{\bar m} / \hat A_{\bar m}$ (cf. Proposition \eqref{prop:LARknown}(iv)). This is because if one suspects the step correlations $C_k$ should be zero beyond step $\bar m$, then one would update $\hat \yvec_{\bar m - 1}$ with the choice $\hat \gamma_{\bar m} = \hat C_{\bar m}/\hat A_{\bar m}$, resulting in the orthgonal projection $\bar \bmu = \hat \Pmat_{\bar m}\yvec$ at the completion of step $\bar m$.  
In order to make inferences on population step coefficients $\bvec_k$, we therefore  consider the active sets $\hat \calA_1,\dots,\hat \calA_{\bar m}$ along the $\LAR(\Xmat,\yvec)$ sample path and define quantities 
\[
B_{nk,j} \equiv \sqrt{n}( (\hat \bvec_k)_j - (\bvec_k)_j) / \hat \sigma_n, \quad  j \in \hat \calA_k,\quad k= 1,\dots,\bar m,
\]
where, for $k=\bar m$ we re-define $\hat \bvec_{\bar m}$ as $\hat \bvec_{\bar m} \equiv \bar \bvec_{\bar m}$; for clarity, note that there are $|\hat \calA_k|=k$ such quantities $B_{nk,j}$ at each step $k$.  
Based on  the continuous mapping theorem with Theorems~\ref{thm:larconsistency} and \ref{thm:Tasymp}, combined  with the fact that weights $\hat{\gamma}_k = [\hat{C}_{k}-\hat{C}_{k+1}]/\hat{A}_{k}$ in Algorithm~\ref{alg:lar_efron}
determine step coefficients $\hat \bvec_k$ (cf.~Proposition~\ref{prop:LARknown}(iii) and Remark~\ref{rem:2}), a well-defined  joint limit distribution  can be shown to exist for the above quantities  $\{B_{nk,j} :j \in \hat{\mathcal{A}}_k, k=1,\ldots,\bar{m}\}$, where this limit, while  normal, has a complicated covariance structure; see the Appendix for details.  Fortunately, the same bootstrap procedure from Section~\ref{sec:stepcorrelations}  applies, and it holds that the   consistency of the bootstrap for step correlations in Theorem~\ref{thm:bootstrap} translates to   consistency for step coefficients as well.

Given $\bar m$,  define  bootstrap responses
  $\yvec^* \equiv \bar \bmu + \berror^*$ based on a bootstrap version of the mean as $\bar \bmu  \equiv \hat \Pmat_{\bar m} \yvec$  
as before. Then let $\hat \bvec^*_k$, $k=1,\dots,\bar m$ be the vectors of step coefficients from the first $\bar m$ steps of the bootstrap sample path $\LAR(\Xmat,\yvec^*)$, with $\hat \bvec^*_{\bar m}$ re-defined as $\hat \bvec^*_{\bar m} \equiv \bar \bvec^*_{\bar m}$ in analogy to the re-definition of $\hat \bvec_{\bar m}$ as $ \hat \bvec_{\bar m} \equiv \bar \bvec_{\bar m}$ at the sample level.
% where $\bar \bvec^*_{\bar m} = (\Xmat_{\hat \calA^*_{\bar m}}^T\Xmat_{\hat \calA^*_{\bar m}})^{-1}\Xmat_{\hat \calA^*_{\bar m}}^T\yvec^*$.
% and define corresponding bootstrap versions of step coefficients $\bvec_k$  by $\hat \bvec_k$, where for $k<\bar m$ and  $\hat \bvec_k \equiv  \bar \bvec_{\bar m} $   for $k = m$
% in terms of sample coefficients $\hat \bvec_k$ as well as $\bar \bvec_{\bar m}   $ 
% as the solution to $\Xmat \bar \bvec_{\bar m} = \hat \Pmat_{\bar m} \yvec \equiv \bar \bmu$ (i.e., entries are $(\bar \bvec_{\bar m})_j = (\Xmat_{\bar{m}}^T \Xmat_{\bar{m}})^{-1} \Xmat_{\bar{m}}^T \yvec)_j  $ 
% for $j\in\hat\calA_{\bar m}$ and  $(\bar \bvec_{\bar m})_j =0$ otherwise). 
Based on the sequence of sample path active sets $\hat \calA_1,\dots,\hat\calA_{\bar m}$, we approximate the distribution of coefficient quantities $B_{nk,j}$, $j \in \hat \calA_k$, $k = 1,\dots, \bar m$ with the distribution of
bootstrap counterparts given as 
\begin{equation}
\label{eqn:Bstar}
B^*_{nk,j} =\sqrt{n}( (\hat \bvec^*_k)_j - (\hat \bvec_k)_j) / \hat \sigma^*_n, \quad  j \in \hat \calA_k, \quad k= 1,\dots,\bar m.
\end{equation}
% where $\hat \bvec^*_k$, $k=1,\dots,\bar m$, are the vectors of step coefficients from the first $\bar m$ steps of the bootstrap sample path $\LAR(\Xmat,\yvec^*)$.
We    then construct $(1-\alpha)100\%$ bootstrap confidence intervals for the population path step coefficients $(\bvec_k)_j$ for $j \in \hat \calA_k$ on steps $k =1,\dots,\bar m$ as
\begin{equation}
\label{eqn:Jkj}
\hat J_{k,j} \equiv \Big[(\hat \bvec_k)_j - B^*_{nk,j}(1-\alpha/2) \frac{\hat \sigma_n}{\sqrt{n}}, (\hat \bvec_k)_j - B^*_{nk,j}(\alpha/2)\frac{\hat \sigma_n}{\sqrt{n}}\Big],
\end{equation}
where $B_{nk,j}^*(1-\alpha/2)$ and $B_{nk,j}^*(\alpha/2)$ denote the upper and lower $\alpha/2$ quantiles, respectively, of the bootstrap distribution of $B_{nk,j}^*$ and $\hat \bvec_{\bar m } \equiv \bar \bvec_{\bar m}$ as already stated.

% In practice, we then regard $\bar \bvec_{\bar {m}}$ satisfying $\Xmat \bar \bvec_{\bar {m}} = \hat \Pmat_{\bar m}\yvec$ as a point estimator of the population step coefficient vector $\bvec_m$ at its 
% terminal step $m$ satisfying $\Xmat \bvec_m = \bmu$, and one need not make   inference  about coefficients $\bvec_k$ when $k >m$ as $\bvec_k \equiv \bvec_m$  for $k > m$.
% We do not, however, attempt to construct confidence intervals for step coefficients beyond step $m$.  This is because for all $j \notin \calA_m$ we have $(\bvec_k)_j = 0$ for all $k \geq m$. So, trusting our estimate $\bar m$, we simply regard $(\bvec_{\bar m})_j$ for $j \notin \calA_{\bar m}$ as zero-valued. If one wishes to construct from the inferred LAR path an estimator of the terminal predictor in the population path, we propose using $\bar \bmu = \Xmat \bar \bvec$ as the estimator of $\bmu_m = \Xmat \bvec_m$ and $\bar \bvec$ as the point estimator of $\bvec_m$. One can then use as confidence intervals for $(\bvec_m)_j$, $j \in \hat \calA_{\bar m}$ the intervals $\hat J_{\bar m,j}$, $j \in \hat \calA_{\bar m}$.
As    estimators $\bar m $   and  $\hat \calA_1,\dots, \hat \calA_{\bar{m}}$ converge in probability  to their population analogs $m$ and 
  $\calA_1,\dots,\calA_m$, we do not need to make any adjustments to the intervals in \eqref{eqn:Jkj} to account for the ``selection'' of $\bar m$ and $\hat \calA_1,\dots, \hat \calA_m$ and, furthermore,  the bootstrap intervals in   \eqref{eqn:Jkj}  will    achieve their nominal coverage levels in large samples.  We  prescribe no confidence intervals for step coefficients at steps $k>\bar{m}$ exceeding the estimate $\bar{m}$ because such intervals lose meaning.  This owes partly to the fact that 
$(\bar{m},\hat\calA_{\bar{m}})$
and $(m, \calA_{m})$ match asymptotically, while population step coefficients $\bvec_k=\bvec_m$ are constant for $k>m$ and   require no estimation.       Additionally, over steps $k >m$, intervals for step coefficients also  lack appropriate coverage interpretations (technically, 
unlike the $k \leq m$ steps,  
the entries $(\hat \bvec_k)_j$ when $k>m$ have  conditional distributions, such as 
$\sqrt{n}( (\hat \bvec_{k})_j - (\bvec_{k})_j) / \hat \sigma_n | \hat \calA_{k} \setminus \hat \calA_{k-1} = \{j\}$  involving  conditioning on   events that remain random in large samples,  which the  bootstrap does not capture).  
% \rd{So: once we have $\bar m$ in hand, should we then re-define the sample estimate of $\hat \bvec_k$ for $k \geq \bar m$ to be $\hat \bvec_{k} = \bar \bvec_{\bar{m}} $ and then define the bootstrap counterpart as  $\hat \bvec_{k}^* = \bar \bvec_{\bar{m}}^* $ for $k \geq \bar m$?   Then, fit/move the following text to above (taken from Section~5.3) into the above:\\
% Given the estimate $\bar m$ of the number $m$ of non-zero step correlations, $\bar \bvec$ is the most natural point estimator of the population step coefficients at the terminal step. This is because if one knew that the step correlations $C_k$ should be zero beyond step $\bar m$, one would update $\hat \yvec_{\bar m - 1}$ with the choice $\hat \gamma_{\bar m} = \hat C_{\bar m}/\hat A_{\bar m}$, resulting in the predictor $\bar \bmu = \hat \Pmat_{\bar m}\yvec$ at the completion of step $\bar m$. 

\begin{remark}
\label{rem:mbar}
The standard residual bootstrap  (i.e., using the least squares estimator $\hat{\yvec} \equiv \Pmat_\Xmat\yvec$ as the bootstrap mean $\bar \bmu$) would  only be  valid here in the specific case that the number $m \leq p$ of non-zero population correlations matches the number $p$ of variables.  Essentially, all  $p$ variables would need to appear in the LAR population path, which may not   hold.  Consequently, we   use a consistent estimator $\bar m$ of $m$
in order to modify the  bootstrap to better mimic non-zero correlations  at the population level, including when $m<p$.    In the bootstrap,  
  $\LAR(\Xmat, \bar \bmu)$
with  
$\bar \bmu  \equiv \hat \Pmat_{\bar m}$  then  plays the role   of the true population path   $\LAR( \Xmat, \bmu)$. 

\end{remark}

\subsection{Visualizing the inferred LAR path}

\begin{figure}
    \centering
    \includegraphics[width=0.9\linewidth]{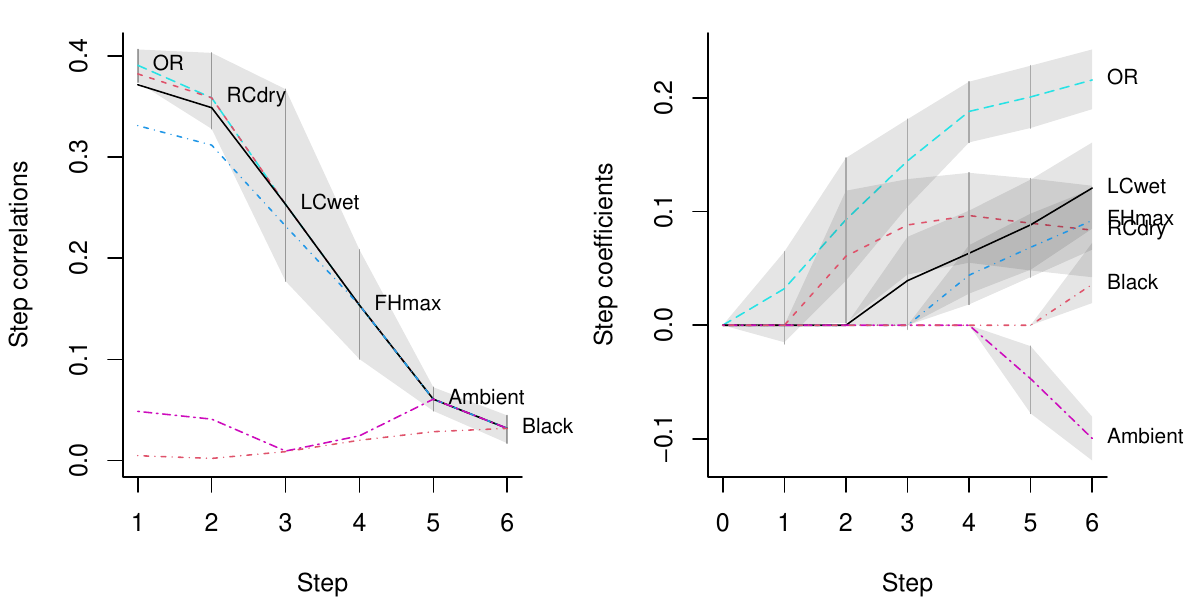}
    \caption{Inferred LAR path for the face temperature data based on $\bar m = 6$.}
    \label{fig:facetemp_larinf}
\end{figure}

Figure \ref{fig:facetemp_larinf} suggests a template for how to summarize inference results about the LAR population path.  This depicts the inferred LAR path underlying the face temperature data based on the estimate of $\bar m = 6$ non-zero population correlations.  The left panel plots the absolute correlations $|(\hat \cvec_k)_j|$ for the first $\bar m$ steps of the sample path (over indices $j\in \hat \calA_{\bar{m}}$ reflecting the $\bar m$ variables involved), and draws pointwise confidence intervals defined in \eqref{eqn:Ik} for the population step correlations $C_1,\dots,C_{\bar m}$;  
estimated  correlations $\hat{C}_k$ are connected with line segments across steps, while  
interval endpoints  are likewise connected  to form a ``tube'' for visual aid.  The right panel similarly shows the confidence intervals defined in \eqref{eqn:Jkj} for the population step coefficients $(\bvec_k)_j$ for indices in the sample path active sets on steps $k=1,\dots, \bar m$. The line segments trace, up to step $\bar m$, the sample path step coefficients $(\hat \bvec_k)_j$ for $j \in \hat\calA_{\bar m}$; at the final step $\bar m$,  recall terminal coefficients  $(\hat \bvec_{\bar{m}})_j = (\bar \bvec_{\bar{m}})_j$ for variables $j\in \hat \calA_{\bar{m}}$  correspond to coefficients of the least squares estimator based on those variables. 
Further discussion of   inference for the face temperature data is given in Section~\ref{sec:facetemp}.

\section{Simulation studies}
\label{sec:sim}

We next investigate the coverage on simulated data sets of the bootstrap intervals $\hat I_k$ in \eqref{eqn:Ik} for the step correlations $C_k$ and $\hat J_{k,j}$ in $\eqref{eqn:Jkj}$ for the step coefficients $( \bvec_k)_j$.
We generate each data set with a design matrix $\Xmat$ and a response mean $\bmu$ satisfying Definition \ref{def:1} of a prototypical LAR path with some number $m$ of non-zero step correlations.  Moreover, we require the population path to satisfy conditions (M1)-(M2) of Theorem \ref{thm:larconsistency} for $\delta$ greater than or equal to some given threshold $\delta_0$, where we consider differing values $\delta_0\in\{0.05,0.1,0.2\}$.  This is done as follows: We generate an $n \times p$ matrix $\Xmat_n$ with rows given by independent realizations of the multivariate normal distribution with mean zero and covariance matrix $\bSigma = ((1/2)^{|i-j|})_{1 \leq i , j , \leq p}$. Then we construct a $p \times 1$ vector $\bbeta$ having entries equal to zero except at $m$ randomly chosen indices,
where these $m$ values in $\bbeta$ are independently drawn from the uniform distribution on $[-2,2]$, and set  the response mean $\bmu_n$ as $\bmu_n \equiv  \Xmat_n \bbeta$. After standardizing $\Xmat_n$ and $\bmu_n$ as in (R4) of Assumption \ref{assum:regassumptions} to obtain $\Xmat$ and $\bmu$, we additionally center $\bmu$ and the columns of $\Xmat$ and compute  the path $\LAR(\Xmat,\bmu)$, which returns a number of non-zero step correlations $C_k$ as well as the largest value of $\delta$ for which the path satisfies (M1)-(M2) of Theorem \ref{thm:larconsistency}.  If the former is equal to $m$ and the latter is greater than or equal to $\delta_0$, we proceed; otherwise, we discard $\Xmat$ and $\bmu$ and repeat the generation again, doing so until we obtain $\Xmat$ and $\bmu$ such that $\LAR(\Xmat,\bmu)$ satisfies these conditions.  Then we generate $\berror_n$ having independent $\calN(0,1)$ entries and set $\yvec_n = \bmu_n + \berror_n$. After centering $\yvec_n$ we set $\yvec \equiv n^{-1/2}\yvec_n$.

On each data set thus constructed, we compute the sample path $\LAR(\Xmat,\yvec)$ and obtain active sets $\hat \calA_k$ as well as the estimate $\bar m$ of $m$ from (\ref{eqn:mest}). Then, based on $500$ Monte Carlo bootstrap draws, we construct 95\% confidence intervals $\hat I_k$ for $k=1,\dots,\bar m$ for the step correlations $C_1,\dots,C_{\bar m}$ as well as 95\% intervals $\hat J_{k,j}$ for $j \in \hat \calA_k$, $k = 1,\dots,\bar m$ for the step coefficients $(\bvec_k)_j$ over the same indices.  We record a realized coverage of these confidence intervals on each data set as follows:  For the intervals $\hat I_k$, $k=1,\dots,\bar m$, we record the proportion of the intervals which cover their targets as $\bar m^{-1}\sum_{k=1}^{\bar m} \mathbb{I}( C_k \in \hat I_k )$, where $\mathbb{I}$ denotes an indicator function; analogously for intervals $\hat J_{nk,j}$ we record a proportion of the intervals which covered their targets as
\[
  \frac{2}{\bar m(\bar m + 1)}\sum_{k=1}^{\bar m} \sum_{j \in \hat \calA_k}\mathbb{I}( (\bvec_k)_j \in \hat J_{k,j})
\]
over  indices $j \in \hat \calA_k$ and  steps $k=1,\dots,\bar m$, corresponding to a total of   $\bar m(\bar m +1)/2$ intervals.
Table \ref{tab:covmbar} shows the average of such coverages over \numprint{1000}   data  simulations at  each  combination  of sample size $n \in \{200,500,1000\}$, number of variables $p \in \{20,50,100\}$, number $m \in \{3,6\}$ of non-zero correlations  (or non-zero entries in $\bbeta$), and threshold $\delta_0 \in \{0.05,0.10,0.20\}$ used to set $\delta \geq \delta_0$ in the LAR population  path conditions (M1)-(M2) of Theorem \ref{thm:larconsistency}.

% latex table generated in R 4.4.0 by xtable 1.8-4 package
% Mon May 19 09:16:03 2025
\begin{table}[ht]
\centering
\begin{tabular}{lll|lll|lll}
  \multicolumn{3}{c|}{}&\multicolumn{3}{c|}{$C_k$, $k=1,\dots, \bar{m}$}&\multicolumn{3}{c}{$(\mathbf{b}_k)_j$, $j \in \hat{\mathcal{A}}_k$, $k = 1,\dots,\bar{m}$}\\ 
$n$ & $p$ & $m$ & $\delta_0 = 0.05$ & $\delta_0 = 0.10$ & $\delta_0 = 0.20$ & $\delta_0 = 0.05$ & $\delta_0 = 0.10$ & $\delta_0 = 0.20$ \\ 
  \hline
200 & 20 & 3 & 0.93 & 0.93 & 0.93 & 0.80 & 0.87 & 0.91 \\ 
 &  & 6 & 0.90 & 0.91 & 0.92 & 0.83 & 0.89 & 0.93 \\ 
   & 50 & 3 & 0.92 & 0.93 & 0.93 & 0.72 & 0.80 & 0.92 \\ 
   &  & 6 & 0.91 & 0.91 & 0.92 & 0.78 & 0.85 & 0.92 \\ 
   & 100 & 3 & 0.92 & 0.92 & 0.91 & 0.68 & 0.76 & 0.88 \\ 
   &  & 6 & 0.91 & 0.90 & 0.91 & 0.74 & 0.80 & 0.90 \\ 
  500 & 20 & 3 & 0.94 & 0.95 & 0.94 & 0.87 & 0.93 & 0.94 \\ 
   &  & 6 & 0.92 & 0.93 & 0.95 & 0.88 & 0.92 & 0.94 \\ 
   & 50 & 3 & 0.94 & 0.94 & 0.94 & 0.82 & 0.89 & 0.94 \\ 
   &  & 6 & 0.92 & 0.93 & 0.94 & 0.85 & 0.92 & 0.95 \\ 
   & 100 & 3 & 0.94 & 0.94 & 0.95 & 0.77 & 0.87 & 0.95 \\ 
   &  & 6 & 0.93 & 0.93 & 0.94 & 0.81 & 0.88 & 0.94 \\ 
  1000 & 20 & 3 & 0.94 & 0.95 & 0.94 & 0.90 & 0.94 & 0.94 \\ 
   &  & 6 & 0.93 & 0.94 & 0.95 & 0.92 & 0.94 & 0.95 \\ 
   & 50 & 3 & 0.95 & 0.95 & 0.94 & 0.87 & 0.94 & 0.94 \\ 
   &  & 6 & 0.94 & 0.94 & 0.94 & 0.90 & 0.93 & 0.94 \\ 
   & 100 & 3 & 0.94 & 0.95 & 0.95 & 0.84 & 0.92 & 0.95 \\ 
   &  & 6 & 0.93 & 0.94 & 0.94 & 0.86 & 0.93 & 0.95 \\ 
\end{tabular}
\caption{Coverage of $95\%$ intervals for population step correlations/coefficients.} 
\label{tab:covmbar}
\end{table}

% \textcolor{red}{Karl, how do we explain the coverage difference between the $C_k$'s and the $(\bvec_{k})_j$'s?  From Table~1, it's easier to determine the contribution (or quantify the uncertainty in estimation) of just the important variable $C_k$ at each step (technically the coefficient attributed to the linear innovation of the entering   variable, i.e., after removing the linear effects of earlier variables)  rather than focusing on the individual variable coefficients at each step combined over all variables that might eventually enter  by the termination point.  This seems natural because LAR works sequentially in assessing the information in variables. In Table~2, we find that, on the estimated last step of the LAR algorithm, the bootstrap intervals do better recover the uncertainty in estimating the coefficients of all variables; at this step, LAR now has a good picture of the contribution/  individual coefficients over all variables deemed important to enter. This also makes intuitive sense and is a good sign of behavior.} 

We see that the coverages are closer to the nominal $0.95$ rate for larger sample sizes $n$ and larger $\delta_0$. While the number $m$ 
of non-zero step correlations has some small effect, in contrast,  
the threshold $\delta_0$ appears to have more influence on the nominal performance of  bootstrap confidence intervals.  Under a larger threshold $\delta_0$, the LAR sample path is more often able to reproduce the LAR population path and its sequence of  active sets, as perhaps expected. 
This study of confidence intervals   
is also configured to accommodate the possibility that the estimator
$\bar m$ may not match $m$; for example, for data simulations with $\bar m >m$, intervals then need to capture $C_k=0$ or entries $(\bvec_k)_j =(\bvec_m)_j$ over $m <k \leq \bar m$.  In this manner,   given a possibly incorrect estimate $\bar m$ of $m$ and the LAR sample path sequence, we have examined making overall correct inference on the underlying  population path step correlations and coefficients up to and including the estimated step $\bar m$.
Covering   individual variable coefficients $(\bvec_k)_j$, for all variables  eventually admitted and  across   all  intermediate steps    before   termination   
(in contrast to at termination $\bar{m}$, cf.~Table \ref{tab:mest}), is 
more difficult than covering  step correlations $C_k$.  
Before termination, the LAR algorithm does not know  what variables will be ultimately admitted,  impacting coverage of all such $(\bvec_k)_j$. Better coverage is possible for step correlations, as  direct output from LAR,  when accommodating for cases where estimated active variables   $\hat \calA_{k-1}$   may not match   population path counterparts $\calA_{k-1}$.   

 Table \ref{tab:mest} next summarizes the performance of the estimator $\bar m$ from \eqref{eqn:mest}, reporting  the proportion of simulated data sets for which $\bar m$ correctly estimated $m$. In addition, the table summarizes  the coverage performance of intervals for step coefficients at the last estimated step $\bar m$ as follows:  
 focusing  on the terminal estimated active set $\hat \calA_{\bar m}$ in a simulation run, we  compute 
the proportion  $\sum_{j \in \hat \calA_{\bar m}} \mathbb{I}((\bvec_m)_j \in \hat J_{\bar m,j})/\bar m$
of   intervals $\hat J_{\bar m,j}$, $j \in \hat \calA_{\bar m}$, at the estimated terminal step $\bar m$
that contain the corresponding true LAR   step coefficients $(\bvec_m)_j$ defined by the population value $m$,  and  report  this average proportion over the \numprint{1000} simulated data sets. 
 Table \ref{tab:mest} indicates that the threshold $\delta_0$ has an impact on the probability that $\bar m$ correctly estimates $m$. For smaller $\delta_0$, correct estimation of $m$ appears more likely under larger $n$, smaller $p$, and smaller $m$.  The intervals $\hat J_{\bar m,j}$ for $(\bvec_m)_j$, $j \in \hat \calA_{\bar m}$ exhibit   good performance even under settings in which $\bar m$  has  difficulty in correctly estimating $m$.  At step $\bar m$, the sample LAR path will have admitted, with high probability, all important variables into the active set, so that, whether these were admitted in the correct order or not over the preceding steps, the terminal population coefficients can be well-estimated.

\begin{table}[ht]
\centering
\begin{tabular}{lll|lll|lll}
  \multicolumn{3}{c}{}&\multicolumn{3}{c}{$\bar{m} = m$}&\multicolumn{3}{c}{ $(\bvec_m)_j$, $j \in \hat \calA_{\bar m}$ by $\hat J_{\bar m, j}$}\\ 
$n$ & $p$ & $m$ & $\delta_0=0.05$ & $\delta_0=0.10$ & $ \delta_0=0.20$ & $\delta_0=0.05$ & $\delta_0=0.10$ & $\delta_0=0.20$ \\ 
  \hline
200 & 20 & 3 & 0.74 & 0.89 & 0.96 & 0.93 & 0.94 & 0.93 \\ 
   &  & 6 & 0.63 & 0.87 & 0.97 & 0.93 & 0.94 & 0.95 \\ 
   & 50 & 3 & 0.63 & 0.78 & 0.95 & 0.92 & 0.94 & 0.94 \\ 
   &  & 6 & 0.53 & 0.76 & 0.95 & 0.92 & 0.93 & 0.94 \\ 
   & 100 & 3 & 0.57 & 0.69 & 0.88 & 0.92 & 0.92 & 0.92 \\ 
   &  & 6 & 0.39 & 0.62 & 0.87 & 0.92 & 0.92 & 0.93 \\ 
  500 & 20 & 3 & 0.86 & 0.97 & 1.00 & 0.94 & 0.95 & 0.94 \\ 
   &  & 6 & 0.84 & 0.96 & 0.99 & 0.94 & 0.94 & 0.95 \\ 
   & 50 & 3 & 0.81 & 0.92 & 0.99 & 0.94 & 0.94 & 0.94 \\ 
   &  & 6 & 0.72 & 0.93 & 1.00 & 0.93 & 0.95 & 0.95 \\ 
   & 100 & 3 & 0.72 & 0.88 & 0.99 & 0.95 & 0.94 & 0.95 \\ 
   &  & 6 & 0.60 & 0.84 & 0.99 & 0.94 & 0.94 & 0.94 \\ 
  1000 & 20 & 3 & 0.93 & 0.99 & 1.00 & 0.94 & 0.95 & 0.94 \\ 
 &  & 6 & 0.93 & 0.99 & 1.00 & 0.94 & 0.94 & 0.95 \\ 
   & 50 & 3 & 0.88 & 0.99 & 1.00 & 0.95 & 0.94 & 0.94 \\ 
   &  & 6 & 0.88 & 0.97 & 1.00 & 0.95 & 0.94 & 0.94 \\ 
   & 100 & 3 & 0.84 & 0.96 & 1.00 & 0.94 & 0.95 & 0.95 \\ 
   &  & 6 & 0.76 & 0.97 & 1.00 & 0.94 & 0.94 & 0.95 \\ 
\end{tabular}
\caption{Correct estimation of $m$, coverage of population terminal coefficients.} 
\label{tab:mest}
\end{table}

\section{Illustrations on real data}
\label{sec:illu}

\subsection{Face temperature data}
\label{sec:facetemp}
The face temperature data, for which Figure \ref{fig:facetemp_lar} depicts the sample LAR path and for which Figure \ref{fig:facetemp_larinf} depicts the inferred LAR path, are publicly available on PhysioNet \cite{PhysioNet} at \url{https://physionet.org/content/face-oral-temp-data/1.0.0/} \cite{wang2023facial}. These data were used in \cite{wang2021infrared} to study the use of infrared thermography for detecting elevated body temperature.
The data include on each of $n = 933$ subjects an oral temperature reading, which we use as the response in $\yvec_n$, as well as several measurements derived from infrared thermographs of the subject's face, the values of some demographic variables, the subject's distance to the thermal camera, the relative humidity, the ambient temperature, and an indicator of whether the subject was wearing cosmetics, which we place in the design matrix $\Xmat_n$. We do not use all the thermographic temperature variables in the data set, as some are very highly related with one another, being taken in the same facial vicinity (e.g.~measurements at several locations on the forehead).  The number of covariates  considered was $p = 18$. After standardizing $\yvec_n$ and $\Xmat_n$ to obtain $\yvec$ and $\Xmat$ as in (R4) of Assumption \ref{assum:regassumptions} we additionally center $\yvec$ and $\Xmat$ before proceeding.

% latex table generated in R 4.4.0 by xtable 1.8-4 package
% Tue May 20 10:53:54 2025
\begin{table}[ht]
\centering
\begin{tabular}{rrrrrrrrrr}
 Variable & $\hat S_{n,k}$ & $\chi^2_{p-k+1,n^{-1}}$ & $\hat C_k$ & 2.5\% & 97.5\% & $\hat \bvec_{\bar m}$ & 2.5\% & 97.5\% \\ 
  \hline
OR & 2403.291 & 42.097 & 0.391 & 0.373 & 0.408 & 0.216 & 0.190 & 0.243 \\ 
RCdry & 460.676 & 40.578 & 0.358 & 0.325 & 0.403 & 0.084 & 0.042 & 0.123\\ 
  LCwet & 210.710 & 39.044 & 0.253 & 0.170 & 0.354  & 0.121 & 0.085 & 0.161\\ 
  FHmax & 180.969 & 37.493 & 0.154 & 0.103 & 0.210 & 0.092 & 0.067 & 0.120 \\ 
  Ambient & 150.449 & 35.922 & 0.061 & 0.049 & 0.072 & -0.100 & -0.119 & -0.081\\ 
  Black & 44.194 & 34.331 & 0.032 & 0.016 & 0.045 & 0.036 & 0.020 & 0.073\\ \hline
  White & 27.992 & 32.716 & 0.025 & 0.000 & 0.061 \\ 
  Male & 22.874 & 31.075 & 0.017 & 0.002 & 0.033 \\ 
  Distance & 16.910 & 29.403 & 0.013 & 0.000 & 0.027 \\ 
  Asian & 14.217 & 27.697 & 0.012 & 0.003 & 0.022 \\ 
  Age31to40 & 10.015 & 25.949 & 0.009 & 0.000 & 0.021 \\ 
  FHtc & 8.789 & 24.151 & 0.008 & 0.003 & 0.012 \\ 
  Hispanic & 2.035 & 22.292 & 0.005 & 0.000 & 0.023 \\ 
  Cosmetics & 1.924 & 20.355 & 0.005 & 0.000 & 0.013 \\ 
  Humidity & 1.425 & 18.313 & 0.004 & 0.000 & 0.016 \\ 
  Age18to20 & 1.154 & 16.119 & 0.003 & 0.000 & 0.017 \\ 
  Age21to25 & 1.062 & 13.677 & 0.001 & 0.000 & 0.004 \\ 
  Age26to30 & 0.799 & 10.699 & 0.001 & 0.000 & 0.002 \\ 
\end{tabular}
\caption{Results of inference on the face temperature LAR path. We obtain $\bar m = 6$ from the rule in \eqref{eqn:mest}.}
\label{tab:facetemp}
\end{table}

Table \ref{tab:facetemp} lists the variables from the face temperature data in the order in which they entered the sample path active set.  The tail sum statistics $\hat S_{n,k}$ from (\ref{eqn:Sk}) and the thresholds $\chi^2_{p-k+1,n^{-1}}$ are shown for all sample path steps $k=1,\dots,p$ as well as the step correlations $\hat C_k$ and the lower and upper limits of the confidence intervals $\hat I_k$; from the output in the table, we see that the rule in \eqref{eqn:mest} suggests there are $\bar m = 6$ non-zero step correlations in the LAR population path.  For these variables, the rightmost three columns give the non-zero entries of the coefficient vector $\hat \bvec_{\bar m} \equiv \bar \bvec_{\bar m}$ satisfying $\Xmat \bar \bvec_{\bar m} = \hat \Pmat_{\bar m}\yvec$, which serves as a point estimate for the final regression coefficients $\bvec_m$ in the LAR population path, as well as the lower and upper bounds of  confidence intervals $\hat J_{\bar m ,j}$, $j \in \hat \calA_{\bar m}$. Note that most of the confidence intervals for $C_k$, $k > 6$, contain zero or have lower limits very near to zero in Table~\ref{tab:facetemp}, also suggesting that $\bar m = 6$ is likely equal to the true value of $m$ in the population path.  We note that all bootstrap confidence intervals were based on 500 Monte Carlo draws.

\begin{figure}
    \centering
    \includegraphics[width=0.9\linewidth]{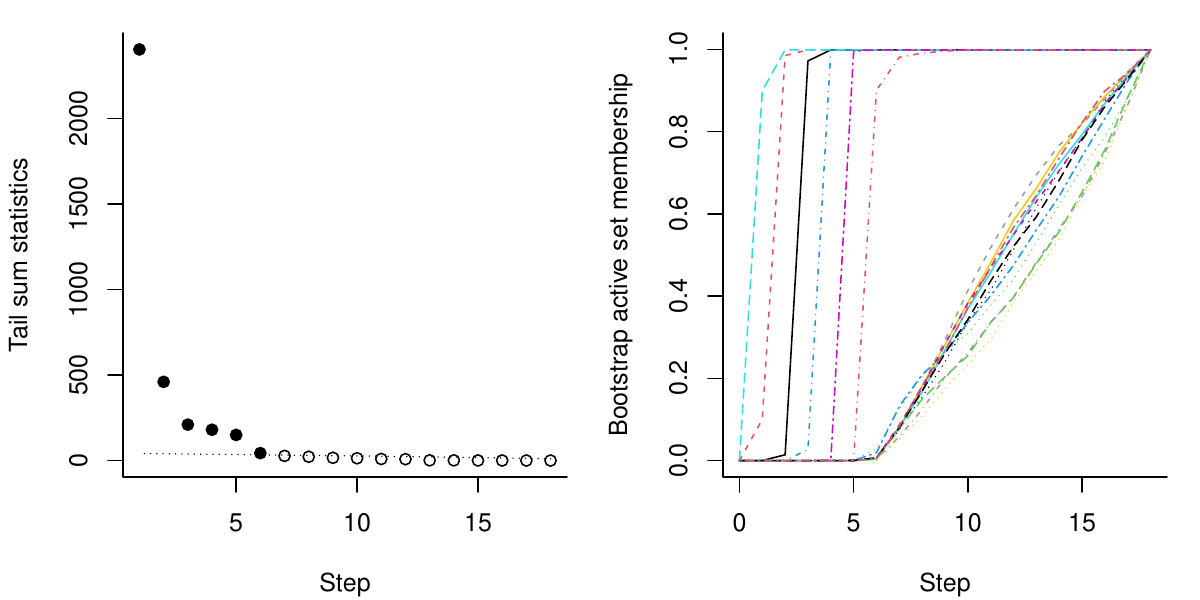}
    \caption{Left: Estimation of $m$. Right: Bootstrap probability of active set membership for each variable index.} 
    \label{fig:facetemp_larinf2}
\end{figure}

Figure \ref{fig:facetemp_larinf2} shows some additional details  regarding inference on the LAR path   in connection to the variable ordering in Table \ref{tab:facetemp}.  The left panel of Figure \ref{fig:facetemp_larinf2}  depicts how the estimate $\bar m$ from \eqref{eqn:mest} is computed by plotting the tail sum statistics  $\hat S_{n,k}$ from \eqref{eqn:Sk}, as listed in Table~\ref{tab:facetemp}, against the step $k$; the dotted line traces the points $(k,\chi^2_{p-k+1,n^{-1}})$ for $k=1,\dots,p$. We see that step~$7$ is the first step on which $\hat S_{n,k}$ falls below the dotted line, resulting in $\bar m = 6$.  Across steps $k = 1,\dots,p$, the right panel of Figure \ref{fig:facetemp_larinf2} displays the proportion of times, out      
  of the 500 Monte Carlo bootstrap samples, that  each variable entered the bootstrap sample path or active set $\hat \calA_{k}^*$ on step $k$; each variable  corresponds to a distribution curve in this plot
  that increases to one as the step  $k$ increases to $p$, due to the fact that a variable stays active once joining an active set at a step in a given bootstrap sample.  
    We see that the first 6 variables entered nearly always in the same order, while the remaining variables appear to have entered in random order over the remaining steps. This plot further suggests that for these data  the LAR sample path can reliably discern  which and when relevant variables should enter.

The results in this section can be reproduced using the R package `larinf', available at \url{https://github.com/gregorkb/larinf}.

    %, suggesting that the conditions (M1) and (M2) of Theorem \ref{thm:larconsistency} are well satisfied.

\subsection{Diabetes data}
\label{sec:diabetes}

The diabetes data provide an example used in the original LAR paper \cite{efron2004least}, though without any inference there, which we can now provide. The data contain the values of $p = 10$ predictors on $n = 442$ patients as well as a continuous response which is a measure of disease progression.  The data set, publicly available in the R package `lars' \citep{larspackage}, contains a design matrix $\Xmat$ with columns already centered and normalized and a response vector $\yvec_n$, which we center and then rescale as in (R4) of Assumption \ref{assum:regassumptions} to obtain $\yvec$. On these data, we estimated the number of non-zero step correlations as $\bar m = 5$. Figure \ref{fig:diabetes_larinf} depicts our inference on the population LAR path (cf. Figure 3 of \cite{efron2004least}), while  Table \ref{tab:diabetes} provides
corresponding  numerical details in the same fashion of  Table \ref{tab:facetemp}.  
See the Appendix for additional summaries with the diabetes data.

\begin{figure}
    \centering
    \includegraphics[width=0.9\linewidth]{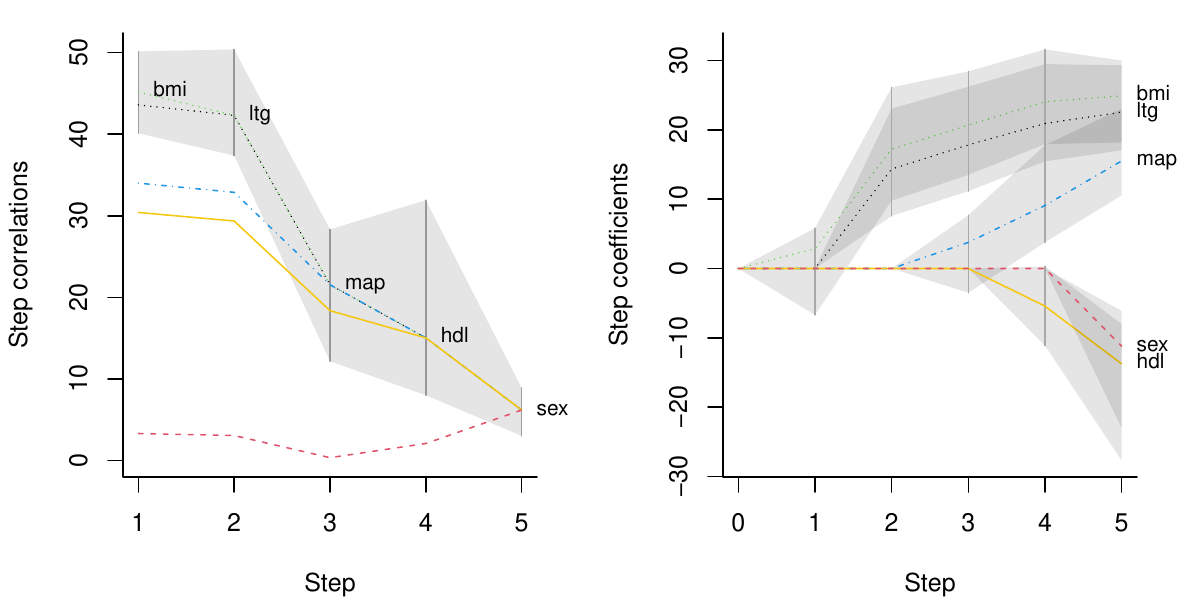}
    \caption{Inferred LAR path from the diabetes data based on $\bar m = 5$.}
    \label{fig:diabetes_larinf}
\end{figure}

% latex table generated in R 4.4.0 by xtable 1.8-4 package
% Tue May 20 11:17:27 2025
\begin{table}[ht]
\centering
\begin{tabular}{rrrrrrrrr}
  Variable & $\hat S_{n,k}$ & $\chi^2_{p-k+1,n^{-1}}$ & $\hat C_k$ & 2.5\% & 97.5\% & $\hat \bvec_{\bar m}$ & 2.5\% & 97.5\% \\ 
  \hline
  bmi & 463.800 & 27.385 & 45.160 & 39.806 & 49.124 & 24.903 & 18.186 & 29.997 \\ 
  ltg & 155.713 & 25.729 & 42.300 & 37.516 & 49.848 & 22.560 & 17.060 & 29.319 \\ 
  map & 52.193 & 24.033 & 21.542 & 11.581 & 28.600 & 15.517 & 10.566 & 23.063\\ 
  hdl & 33.742 & 22.291 & 15.034 & 9.128 & 24.955 & -13.752 & -27.697 & -8.015\\
  sex & 23.515 & 20.492 & 6.190 & 2.929 & 8.862 & -11.215 &  -23.019 & -6.119\\ \hline 
  glu & 8.167 & 18.619 & 4.223 & 0.000 & 31.976 \\ 
  tc & 7.466 & 16.648 & 3.280 & 0.000 & 5.990 \\ 
  tch & 2.835 & 14.533 & 0.950 & 0.000 & 2.919 \\ 
  ldl & 1.994 & 12.183 & 0.261 & 0.000 & 0.607 \\ 
  age & 0.028 & 9.323 & 0.242 & 0.000 & 2.612 \\ 
\end{tabular}
\caption{Results of inference on the diabetes LAR path. We obtain $\bar m = 5$ from the rule in \eqref{eqn:mest}.}
\label{tab:diabetes}
\end{table}

% \section{Further technical aspects about LAR}
% \label{sec:further}

\section{Mid-path ties in the population path}
\label{sec:midpathties}
In Section~\ref{sec:lar}, we  introduced the LAR population path $\LAR(\Xmat,\bmu)$ from Algorithm~\ref{alg:lar}  and described
the notion of a prototypical population path (Definition~\ref{def:1}) in which  exactly one variable $j_k$ enters the active set with a non-zero step correlation $C_k>0$ on step $k$, over a series of steps $k=1,\ldots,m$ until the algorithm ends at a terminal step $m\leq p$.  Here we aim to better explain that the LAR algorithm is intended for such prototypical  paths, at least in some default sense, because LAR sample and population paths can {\it fail} to match outside of this setting to the extent that LAR may  not even be valid.

For example, if we do not have   a prototypical population path $\LAR(\Xmat,\bmu)$, then  two or more variables (being not active at some step $k-1$) could   simultaneously join the active set $\mathcal{A}_k$ on some single step $k$ 
in the path $\LAR(\Xmat,\bmu)$ with a positive step correlation $C_k>0$.  
We refer to this occurrence as a \textit{mid-path tie},
which can technically (though perhaps not always practically) occur in the population path under Algorithm~\ref{alg:lar}.   
A problem, though, 
is that    mid-path ties can {\it never} happen  at a data level with a continuous response $\yvec$; that is,  a LAR sample path $\LAR(\Xmat,\yvec)$  will always admit only {\it one} new variable per step.    Consequently, if the LAR algorithm at the population level has mid-path ties, then the data level simply cannot capture such ties and the LAR sample path will require a separate step to admit each  variable appearing in the population tie (i.e., that would be admitted on one single step at the population level).   To complicate matters further, the data level may also  admit such mid-tie variables in a random order.   It turns out then that, in the case of mid-path population ties,   the LAR sample version can be inconsistent.  Note that the conditions (M1)-(M2) in our consistency result of Theorem \ref{thm:larconsistency}  also preclude mid-path ties.

Rather than providing a detailed treatment of mid-path ties here, which may be formally   complicated, it suffices to provide a numerical illustration involving one mid-path tie with two variables that 
enter on a population step.   In particular, consider  a setting with $p=4$ variables, in which variable $\xvec_1$ enters the LAR population path on step 1 with positive sign, while $\xvec_2$ and $\xvec_3$ enter simultaneously on step~2 with positive signs, and while variable $\xvec_4$ does not enter the population path (i.e., has entrance correlation zero).  To construct such a path, we created a  
design matrix $\Xmat$ by generating and normalizing $\Xmat_n$ having  $n = 500$ independent $\calN(\zerovec,\bSigma)$ rows with $\bSigma = (0.9^{|i-j|})_{1 \leq i , j , \leq 4}$, and then set a response mean as $\bmu_n \equiv  \sqrt{n}(\xvec_1 + \avec_3)$, where $\avec_3$ was the equiangular vector corresponding to the first three matrix columns $[\xvec_1~\xvec_2~\xvec_3]$. This definition of   the response mean $\bmu_n$ yields a LAR population path with an intended mid-path tie. 
From this single construction of $\bmu_n$, we generated \numprint{20000} realizations of $\yvec_n = \bmu_n + \berror_n$, where $\berror_n \sim \calN(\zerovec,\Imat_n)$, and Figure \ref{fig:asymp_dist_ties_Ck} displays  the approximated marginal distributions of each sample path step correlation $\hat C_1$, $\hat C_2$, $\hat C_3$, and $\hat C_4$.      We see that the marginal distribution of $\hat C_3$, in particular, is a mixture distribution, where mixture components arise here specifically due to variables $\xvec_2$ and $\xvec_3$ competing to enter the active set on step 2 of the sample path 
whereby  the ``loser''   enters the sample path on step~3.  
However, the  distribution of $\hat C_3$ indicates a serious problem   here.  That is, the sampling  
distribution for $\hat C_3$  turns out to never completely concentrate at either population counterpart $C_2\approx 0.9$ or $C_3=0$ in increasing sample sizes, which indicates that $\hat C_3$ is {\it not} consistent  for any $C_i$ value.  Hence, mid-path ties in the LAR population path (i.e., where variables $\xvec_2,\xvec_3$ become active on a single step) can cause the LAR data algorithm (where the same variables can never enter  on a single step) to fail to match LAR counterparts at the population level.

\begin{figure}[ht]
\includegraphics[width = \textwidth]{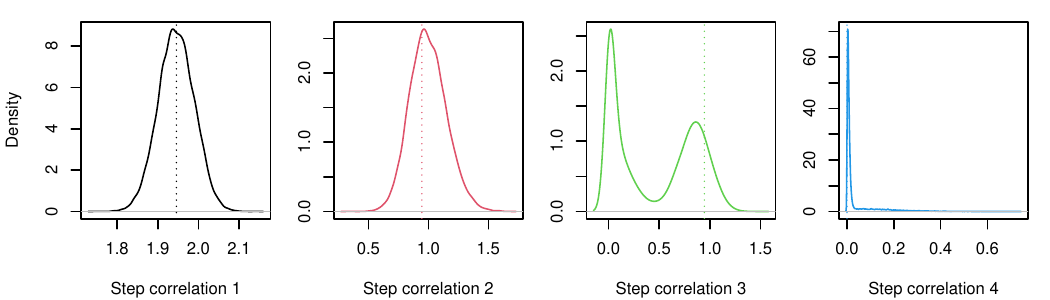}
% \caption{Monte Carlo approximations to the densities of $\hat C_1$, $\hat C_2$, $\hat C_3$, $\hat C_4$ based on \numprint{20000} realizations in a setting with $n = 500$  and $p = 4$ under a construction of $\Xmat$ and $\bmu$ such that in the LAR population path $\xvec_1$ enters on step 1, $\xvec_2$ and $\xvec_3$ enter simultaneously on step 2, and $\xvec_4$ does not enter.  The values of the population step correlations $C_1>C_2>0=C_3=C_4$ are indicated by dotted vertical lines. The distribution of $\hat C_3$ in the third panel becomes bimodal due to the fact that the sample path cannot admit $\xvec_2$ and $\xvec_3$ simultaneously; the two modes correspond to the orders in which $\xvec_2$ and $\xvec_3$ entered the active set on steps 2 and 3 of the sample path.}
\caption{Distributions of sample LAR step correlations in a case in which there is a mid-path tie in the population path.}
\label{fig:asymp_dist_ties_Ck}
\end{figure}

While not further investigating    mid-path ties  here, our point again is that, outside of the prototypical LAR population path (i.e., with no mid-path ties),   the original LAR algorithm   may   require modification to ensure that  output from LAR is even  meaningful.  From our understanding, this aspect is not apparent in the original LAR work of  \cite{efron2004least},  which  largely focuses on a conceptualization of LAR as admitting  one variable at a time.  Remark~\ref{rem:3} of Section~\ref{sec:understanding} also speaks to the latter point, where   a prototypical LAR population path 
is  connected to a complexity notion 
from
\cite{efron2004least}  for a LAR sample path.

\section{Discussion}
\label{sec:discussion}
We have presented  new explanations and results toward better understanding and using the output of the least angle regression (LAR) algorithm proposed by    
 \cite{efron2004least}.  Equi-angular vectors, step correlations, and variable orderings from a LAR path 
are shown to admit helpful decompositions for algebraically interpreting predictors.  In fact, variable entrance into LAR is seen to be determined by a criterion involving a type of penalized sum of squares (Lemma 9), which provides a useful alternative viewpoint of LAR against the original algorithmic statement in   \cite{efron2004least}.
Additionally, we have introduced a notion that the output of LAR from a data sample can be viewed as estimators
of a larger population truth given by  
 the population-level LAR path (i.e., how the LAR algorithm would  prescribe or organize the   true response mean).    Because an underlying response mean is built from  step correlations and variable orderings as the quantities defining a LAR  population path, 
the output of LAR from a sample is seen as an estimation procedure that attempts to uncover this population structure.   
Our results establish that a LAR  sample  path is consistent in the sense that, with sufficient data, the algorithm will admit variables into the active set in the correct order, with the correct signs, and up to the right number of steps as the population version; moreover,  sample  step correlations, when properly scaled, converge jointly to a well-defined limiting distribution that admits estimation via a bootstrap procedure for uncertainty quantification.  Our results also suggest a new and formal rule for deciding from data when the LAR algorithm should be stopped.  
An end-product of our work is an inferred LAR path, which consists of interval estimates for the population step correlations as well as for the step regressor coefficients, as indexed by      active sets in the sample path up to the number of steps estimated to exist in the population path.  Under our conditions, we recover the true active sets and the true number of steps in the population path with probability tending to one, so inference on these population path quantities do not require post-selection-type adjustments.

This work suggests that several further studies of LAR are possible.  We have focused on the $p < n$ case, though we believe similar results may be obtained in a $p>n$ setting in which $p$ is allowed to grow with $n$.  
The case of mid-path population ties, where the LAR population path might admit more than one variable on a single step, is also of interest in investigation.   
We have illustrated numerically that the original LAR formulation is perhaps not intended for such cases, which may motivate  a modification to the LAR sample algorithm 
in order to force this to admit variables simultaneously     when variables appear nearly tied to enter on a given sample step.  Based on results here about how equi-angular vectors update in the LAR algorithm, it seems that such ties in LAR could essentially mean that the algorithm may seek for all tied variables on a step to be linearly combined into one single variable (in some fashion) before proceeding.  Connections and comparisons to Lasso 
may also warrant further exploration.   For example, it is presently unclear how the assumptions needed for consistency of LAR   compare to those required of Lasso, but these may differ so that    LAR/Lasso could possibly have different operating conditions.       Further,  the developments here for LAR raise questions about possible 
alternative understandings of  the Lasso solution path. The Lasso modification to the LAR algorithm, however, can change the number of steps the algorithm takes to finish, as a variable which has entered the active set may afterwards become inactive. As a result, we anticipate that summaries of an inferred Lasso-modified LAR path would be more complicated than those we present in Tables \ref{tab:facetemp} and \ref{tab:diabetes}, in which each variable enters the active set once and only once.   We leave these investigations for the future.

% An R package `larinf', which we make available at \url{https://github.com/gregorkb/larinf}, contains complete code for implementing our methods.

% Acknowledgements and Disclosure of Funding should go at the end, before appendices and references

\acks{
Research partially supported by   NSF-DMS 2515719.}
% Manual newpage inserted to improve layout of sample file - not
% needed in general before appendices/bibliography.

 %\newpage

\newpage
\appendix

\section{Proofs of results from Sections \ref{sec:lar} and \ref{sec:understanding} }
\label{supp:A} 

\begin{proof}{ \bf of Lemma \ref{lem:gammak}:} For simplicity, we establish the analog result considering the LAR population path in place of the sample path.  In which case, it suffices to show that $\gamma_k$ as defined in \eqref{eqn:gammak} is the maximum value of $\gamma \in (0, C_k/ A_k]$ such that
\begin{equation}
\label{eqn:gammasetup}
|\xvec_j^T(\bmu - (\bmu_{k-1} +  \gamma \avec_k))| \leq |\xvec_l^T(\bmu - (\bmu_{k-1} + \gamma \avec_k))|
\end{equation}
for all $j\notin \calA_k$ and all $l \in \calA_k$; by showing a solution exists for  $\gamma \in (0, C_k/ A_k]$, then there is no need to consider larger $\gamma> C_k/A_k$. For each $j \notin \calA_k$ the left side of \eqref{eqn:gammasetup} can be written
\[
    |\xvec_j^T(\bmu - (\bmu_{k-1} + \gamma \avec_k))| = |(\cvec_k)_j - \gamma (\wvec_k)_j|
\]
and for any $l \in \calA_k$, the right side of \eqref{eqn:gammasetup} can be written
\[
|\xvec_l^T(\bmu - (\bmu_{k-1} + \gamma \avec_k))| = |\sign((\cvec_k)_l)(C_k - \gamma A_k)| = C_k - \gamma A_k,
\]
as we consider  $\gamma \leq C_k/A_k$. Now re-write  (\ref{eqn:gammasetup}) as 
\begin{equation} \label{eqn:gammaineq}
-C_k + \gamma A_k \leq (\cvec_k)_j - \gamma (\wvec_k)_j\leq C_k - \gamma A_k.
\end{equation}
The maximum value of $\gamma \leq C_k / A_k$ satisfying the above expression is found by solving either
\begin{equation}\label{eqn:gamma1}
-C_k + \gamma A_k = (\cvec_k)_j - \gamma (\wvec_k)_j,
\end{equation}
from the first inequality if  $(\cvec_k)_j - (C_k/A_k)(\wvec_k)_j < 0$ holds or by solving
\begin{equation}\label{eqn:gamma2}
(\cvec_k)_j - \gamma (\wvec_k)_j =  C_k - \gamma  A_k
\end{equation}
from the second inequality if $(\cvec_k)_j - (C_k/A_k)(\wvec_k)_j > 0$ holds.   This is depicted in Figure~\ref{fig:gammaplot}, in which the solid lines trace the values $C_k - \gamma A_k$ and $- C_k + \gamma A_k$ over $\gamma \in (0,C_k/A_k]$ and the dashed lines trace examples of $(\cvec_k)_j - \gamma (\wvec_k)_j$ over $\gamma \in(0,C_k/A_k]$ in the two cases that either $(\cvec_k)_j - (C_k/A_k)(\wvec_k)_j > 0$ or $(\cvec_k)_j - (C_k/A_k)(\wvec_k)_j < 0$;  whether one should solve   \eqref{eqn:gamma1} or   \eqref{eqn:gamma2} here depends on whether $(\cvec_k)_j - \gamma (\wvec_k)_j$ intersects with $C_k - \gamma A_k$ or with $- C_k + \gamma A_k$ (i.e., exactly one of these cases must occur if
$(\cvec_k)_j - (C_k/A_k)(\wvec_k)_j  \neq  0$ using that 
$|(\cvec_k)_j| < C_k$ for any 
$j \notin \calA_k$), which corresponds to the cases $(\cvec_k)_j - (C_k/A_k)(\wvec_k)_j > 0$ and $(\cvec_k)_j - (C_k/A_k)(\wvec_k)_j < 0$, respectively.  Note that, as we consider only $j \notin \calA_k$, whereby $|(\cvec_k)_j| < C_k$, it follows in either case that the solution will satisfy $\gamma >0$.  In a final possible case that $(\cvec_k)_j - (C_k/A_k)(\wvec_k)_j  = 0$, we obtain $\gamma = C_k/A_k>0$  (i.e., which is positive by $C_k,A_k > 0$). These three cases lead to the expression for $\gamma_{k,j}>0$ in \eqref{eqn:gammak}. Setting $\gamma_k = \min_{j \notin \calA_k} \{\gamma_{k,j}\}$ then results in the largest value of $\gamma$ satisfying \eqref{eqn:gammasetup}.
\end{proof}

\begin{figure}
\begin{center}
\begin{tikzpicture}[scale=0.7]
  \draw[-,dotted] (-.1,0) -- (8,0); \node[below] at (8,0) {$\gamma$};\node[left] at (-.1,0) {$0$};\node[below] at (0,-.1) {$0$};
  \draw[-,dotted] (0,-4) -- (0,4); \node[below] at (7,-.1) {$C_k / A_k$};
  \draw[-,dotted] (7,-4) -- (7,4);
  \draw[-] (0,3) -- (7,0); \node[left] at (0,3) {$C_k$};
  \draw[-] (0,-3) -- (7,0);\node[left] at (0,-3) {$ - C_k$};
  \draw[-,dashed] (0,1) -- (7,1.5); \node[left] at (0,1) {$(\cvec_k)_j$}; \node[right] at (7,1.5) {$(\cvec_k)_j - \dfrac{C_k}{A_k}(\wvec_k)_j > 0$};
    \draw[-,dashed] (0,1) -- (7,-3); \node[right] at (7,-3) {$(\cvec_k)_j - \dfrac{C_k}{A_k}(\wvec_k)_j < 0$};
  \fill[gray, opacity = 0.1] (0,-3) -- (7,0) --  (0,3) --  (0,-3);
\end{tikzpicture}
\caption{Depiction of finding the largest $\gamma \in(0, C_k / A_k]$ which satisfies \eqref{eqn:gammaineq}. The solid lines trace the values $C_k - \gamma A_k$ and $- C_k + \gamma A_k$ and the dashed lines show examples of $(\cvec_k)_j - \gamma (\wvec_k)_j$ in the two cases $(\cvec_k)_j - (C_k/A_k)(\wvec_k)_j > 0$ and $(\cvec_k)_j - (C_k/A_k)(\wvec_k)_j < 0$.}
\label{fig:gammaplot}
\end{center}
\end{figure}
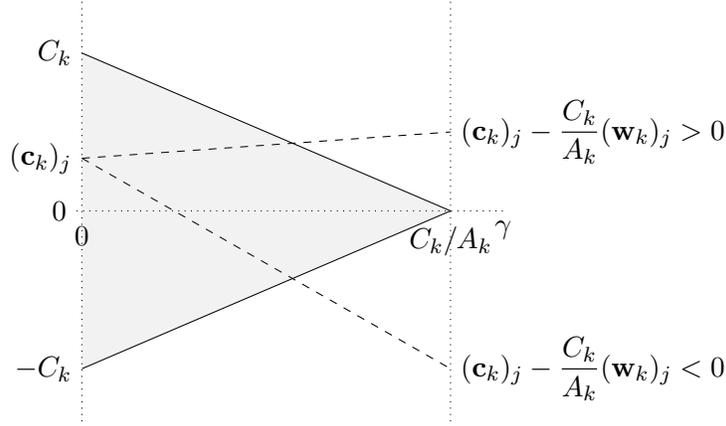

\begin{proof}{\bf of Proposition \ref{prop:LARknown}:} Claim (ii) follows from the definitions of $\cvec_k$, $C_k$, $\calA_k$, and $\Xmat_k$ given in Steps 1--4 of Algorithm \ref{alg:lar}, from which we have
\[
\Xmat_k^T(\bmu - \bmu_{k-1}) = [\sign((\cvec_k)_j) \xvec_j^T(\bmu - \bmu_{k-1}), ~j \in \calA_k]  
 = [\sign((\cvec_k)_j) (\cvec_k)_j, ~j \in \calA_k]  
 = C_k \onevec.\]
We next prove (iii). Suppose $j$ enters the active set on step $k+1$. Then $C_{k+1} = |(\cvec_{k+1})_j|$ and $\gamma_k = \gamma_{k,j}$, where $\gamma_{k,j}$ is defined in \eqref{eqn:gammak}, and we may write
\begin{align*}
    (\cvec_{k+1})_j  = \xvec_j^T(\bmu - \bmu_k)  
    = \xvec_j^T(\bmu - (\bmu_{k-1} + \gamma_{k,j} \avec_k))  &= (\cvec_k)_j - \gamma_{k,j}(\wvec_k)_j\\
                         &= \left\{\begin{array}{ll}
                             \dfrac{A_k(\cvec_k)_j- (\wvec_k)_jC_k}{A_k - (\wvec_k)_j},& r_{k,j} = 1\\
                             &\\
                              \dfrac{A_k(\cvec_k)_j - (\wvec_k)_jC_k}{A_k + (\wvec_k)_j},& r_{k,j} = - 1 \\
                              &\\
                              0, & r_{k,j} =0,
                         \end{array}\right.
\end{align*}
following \eqref{eqn:gammak}, where $r_{k,j} = \sign((\cvec_k)_j - (C_k/A_k) (\wvec_k)_j)$. We may similarly write
\begin{align*}
C_k - \gamma_k A_k  = C_k - \gamma_{k,j} A_k  = \left\{\begin{array}{ll}
                             \dfrac{A_k(\cvec_k)_j- (\wvec_k)_jC_k}{A_k - (\wvec_k)_j},& r_{k,j} = 1\\
                             &\\
                              -\dfrac{A_k(\cvec_k)_j - (\wvec_k)_jC_k}{A_k + (\wvec_k)_j},& r_{k,j} = - 1 \\
                              &\\
                              0, & r_{k,j} =0.
                         \end{array}\right.\end{align*}
In the case $r_{k,j} = 0$, we have $(\cvec_{k+1})_j = C_k - \gamma_k A_k = 0$. If $r_{k,j} \neq 0$, we will have $C_k - \gamma_{k,j} A_k > 0$ (see the proof of Lemma \ref{lem:gammak}), so that in the case $r_{k,j} = 1$ we have
\[
(\cvec_{k+1})_j = \dfrac{A_k(\cvec_k)_j- (\wvec_k)_jC_k}{A_k - (\wvec_k)_j} > 0
\]
and in the case $r_{k,j} = -1$ we have
\[
(\cvec_{k+1})_j = \dfrac{A_k(\cvec_k)_j - (\wvec_k)_jC_k}{A_k + (\wvec_k)_j} < 0.
\]
From here we see that in all cases we have $|(\cvec_{k+1})_j| = C_k - \gamma_k A_k$, which proves (iii). In addition, the above gives
\begin{equation}
\label{eqn:signdetermine}
\sign((\cvec_{k+1})_j) = r_{k,j} \quad \text{ if $j$ enters on step $k+1$}.
\end{equation}

We now prove (i).  Suppose $j \in \calA_k$. Then 
\begin{align*}
    (\cvec_{k+1})_j  = \xvec_j^T(\bmu - \bmu_k)  = \xvec_j^T(\bmu - (\bmu_{k-1} + \gamma_k \avec_k)) 
                    &= (\cvec_k)_j - \gamma_k\sign( (\cvec_k)_j) A_k\\
                    &= \sign((\cvec_k)_j)(C_k - \gamma_kA_k) = \sign((\cvec_k)_j) C_{k+1},
\end{align*}    
where the third equality uses $\sign((\cvec_k)_j)\xvec_j^T\avec_k = A_k$ for $j \in \calA_k$ and the last equality uses (iii). This gives $|(\cvec_{k+1})_j| = C_{k+1}$. It follows that $j \in \calA_{k+1}$, proving (i). Note that the above also shows 
\begin{equation}
\label{eqn:signsdontchange}
\sign((\cvec_{k+1})_j ) = \sign((\cvec_k)_j) \quad \text{ for } j \in \calA_k,
\end{equation}
provided $C_{k+1}>0$; if $C_{k+1} = 0$ then $\sign((\cvec_k)_j) = 0$. 
Lastly we prove (iv). For $j \in \calA_k$,  we have 
\begin{align*}
    \xvec_j^T(\bmu - (\bmu_{k-1} + (C_k/A_k)\avec_k)) &= (\cvec_k)_j - (C_k/A_k)\xvec_j^T\avec_k  
     = (\cvec_k)_j - (C_k/A_k)\sign((\cvec_k)_j) A_k 
    = 0,
\end{align*}
since $|(\cvec_k)_j| = C_k$ for each $j \in \calA_k$. Therefore, the vector $\bmu - (\bmu_{k-1} + (C_k/A_k)\avec_k)$ lies in the orthogonal complement to the column space of $\Xmat_k$. On the other hand, $\bmu_{k-1} + (C_k/A_k)\avec_k$ lies in the column space of $\Xmat_k$, so the decomposition
\[
\bmu = (\bmu_{k-1} + (C_k/A_k)\avec_k) + (\bmu - (\bmu_{k-1} + (C_k/A_k)\avec_k)),
\]
is an orthogonal decomposition of $\bmu$ in which the first term on the right side is the orthogonal projection of $\bmu$ onto the column space of $\Xmat_k$.

\end{proof}

\begin{proof}{\bf of Lemma \ref{lem:aAupdate}:}
Claim (i) follows from \eqref{eqn:signsdontchange} in the proof of Proposition \ref{prop:LARknown}. Claims (ii) and (iii) hold for $k = 1$ due to the fact that if $j_1$ enters on step $1$ with sign $s_1$ we will have $A_1 = 1$, $\avec_1 = s_1\xvec_{j_1}$, $\evec_1 = \xvec_{j_1}$, and $u_1 = 1$.  We next prove claims (ii) and (iii) for $k > 1$. Suppose $j_k$ enters the active set on step $k$ with sign $s_k$. By a block matrix inversion formula and by the definitions of $\avec_{k-1}$, $A_{k-1}$, and $\evec_k$, we may write
\begin{align*}
    A_k^{-2} &= \onevec^T \left(\begin{array}{ll}
         \Xmat_{k-1}^T\Xmat_{k-1}& s_k \Xmat_{k-1}^T \xvec_{j_k}  \\
         s_k \xvec_{j_k}^T\Xmat_{k-1}& \xvec_{j_k}^T\xvec_{j_k} 
    \end{array}\right)^{-1}\onevec\\
    &= \onevec^T(\Xmat_{k-1}^T\Xmat_{k-1})^{-1}\onevec + (1 - s_k \xvec_{j_k}^T\Xmat_{k-1}(\Xmat_{k-1}^T\Xmat_{k-1})^{-1}\onevec)^2(\xvec_{j_k}^T(\Imat - \Pmat_{k-1})\xvec_{j_k})^{-1}\\
    &= A_{k-1}^{-2} + (1 - s_k \xvec_{j_k}^T\avec_{k-1}/A_{k-1})^2(\evec_k^T\evec_k)^{-1},
\end{align*}
which proves claim (iii).  To prove claim (ii), one may similarly use a block matrix inversion formula to show
\[
\Pmat_{k-1} \frac{1}{A_k}\avec_k = \frac{1}{A_{k-1}}\avec_{k-1} 
\]
as well as
\[
(\Imat - \Pmat_{k-1})\frac{1}{A_k}\avec_k= s_k u_k\evec_k,
\]
from which one can write the orthogonal decomposition
\[
\frac{1}{A_k}\avec_k  = \frac{1}{A_{k-1}}\avec_{k-1} + s_k u_k\evec_k.
\]  
To show $u_k > 0$, we begin with the fact that $\gamma_{k-1} = \gamma_{k-1,j_k} > 0$ (see the proof of Lemma \ref{lem:gammak}).  We may write
\[
\gamma_{k-1,j_k} = \frac{C_{k-1} - r_{k-1,j_k} (\cvec_{k-1})_{j_k}}{A_{k-1} - r_{k-1,j_k} \xvec_{j_k}^T\avec_{k-1}} = \frac{C_{k-1} - s_k (\cvec_{k-1})_{j_k}}{\evec_k^T\evec_kA_{k-1}u_k} > 0,
\]
making use of $s_k = r_{k-1,j_k}$ from \eqref{eqn:signdetermine}.   Now, the numerator $C_{k-1} - s_k (\cvec_{k-1})_{j_k}$ is positive by the fact that $j_k \notin \calA_{k-1}$, giving $u_k >0$.
\end{proof}

\begin{proof}{\bf of Lemma \ref{lem:Ck}:} First consider step $k=1$. Assume $j_1$ enters on step $1$ with sign $s_1$. Then $\avec_1 = s_1\xvec_{j_1}$, $A_1 = 1$, and $\evec_k = \xvec_{j_1}$, so we have
\[
\Big(\frac{1}{A_1^2} - \frac{1}{A_0^2}\Big)^{-1}\Big(\frac{1}{A_1}\avec_1 - \frac{1}{A_0}\avec_0\Big)^T\bmu = s_1\xvec_{j_1}^T\bmu = C_1,
\]
since $A_0 = \infty$ and $\avec_0 = \zerovec$. Moreover
\[
\frac{s_1 \evec_1^T \bmu}{ (1 - s_1 \xvec_{j_1}^T\avec_0 / A_0)} = s_1 \xvec_{j_1}^T\bmu = C_1.
\]
For $k >1$, suppose $j_k$ enters on step $k$ with sign $s_k$ and multiply both sides of $\Xmat_k^T(\bmu - \bmu_{k-1}) = C_k\onevec$, i.e. Proposition \ref{prop:LARknown}(ii), by $\onevec^T(\Xmat_k^T\Xmat_k)^{-1}$. This gives
\[
\onevec^T(\Xmat_k^T\Xmat_k)^{-1}\Xmat_k^T(\bmu - \bmu_{k-1}) = C_k\onevec^T(\Xmat_k^T\Xmat_k)^{-1}\onevec.
\]
From the definitions of $\avec_k$ and $A_k$, we may rewrite the previous equation as
\[
\frac{1}{A_k}\avec_k^T(\bmu - \bmu_{k-1}) = C_k\frac{1}{A_k^2},
\]
the left hand side of which can be rewritten as
\[
\frac{1}{A_{k-1}}\avec_{k-1}^T(\bmu - \bmu_{k-1}) + \left(\frac{1}{A_k}\avec_k - \frac{1}{A_{k-1}}\avec_{k-1}\right)^T(\bmu - \bmu_{k-1}).
\]
The first term of the above can be written as
\[
\frac{1}{A_{k-1}}\avec_{k-1}^T(\bmu - \bmu_{k-1}) = \onevec^T(\Xmat_{k-1}^T\Xmat_{k-1})^{-1}\Xmat_{k-1}^T(\bmu - \bmu_{k-1}) = \frac{1}{A_{k-1}^2}C_k,
\]
so we have
\[
\frac{1}{A_{k-1}^2}C_k + \left(\frac{1}{A_k}\avec_k - \frac{1}{A_{k-1}}\avec_{k-1}\right)^T(\bmu - \bmu_{k-1}) = \frac{1}{A_k^2}C_k.
\]
From here we may write
\[
    C_k =  \Big(\frac{1}{A_k^2} - \frac{1}{A_{k-1}^2}\Big)^{-1}\Big(\frac{1}{A_k}\avec_k - \frac{1}{A_{k-1}}\avec_{k-1}\Big)^T(\bmu - \bmu_{k-1}).
\]
Now Lemma \ref{lem:aAupdate}(ii) gives
\[
\Big(\frac{1}{A_k}\avec_k - \frac{1}{A_{k-1}}\avec_{k-1}\Big)^T\bmu_{k-1} = u_ks_k \evec_k^T\bmu_{k-1}= 0,
\]
since $\bmu_{k-1}$ is in the column space of $\Xmat_{k-1}$, to which the vector $\evec_k$ is orthogonal. This gives the first equality in \eqref{eqn:Ckangles}; the second equality as well as the positivity of $1 - s_k \xvec_{j_k}^T\avec_{k-1}/A_{k-1}$ follow from Lemma \ref{lem:aAupdate}(ii)-(iii). Lastly, we may use \eqref{eqn:Ckangles} and Lemma \ref{lem:aAupdate}(ii) to write
\[
C_k\Big(\frac{1}{A_k}\avec_k - \frac{1}{A_{k-1}}\avec_{k-1}\Big) = \frac{s_k \evec_k^T\bmu}{(1 - s_k \xvec_{j_k}^T\avec_{k-1}/A_{k-1})} u_k s_k \evec_k= \evec_k(\evec_k^T\evec_k)^{-1}\evec_k^T \bmu,
\]
which establishes \eqref{eqn:Pek}.
\end{proof}

\begin{proof}{\bf of Lemma \ref{lem:muk}:} Proposition \ref{prop:LARknown}(iii) gives $\gamma_k = (C_k - C_{k+1})/A_k$ for each $k=1,\dots,m$, by which we may write
\begin{align*}
    \bmu_k  = \sum_{j=1}^k \gamma_j \avec_j  = \sum_{j=1}^k (C_j - C_{j+1})\frac{1}{A_j} \avec_j  = \sum_{j=1}^kC_j\Big(\frac{1}{A_j} \avec_j - \frac{1}{A_{j-1}}\avec_{j-1}\Big) - C_{k+1}\frac{1}{A_k}\avec_k.
\end{align*}
From here, the first equality in \eqref{eqn:sumproj} follows from 
\[
\frac{1}{A_k}\avec_k = \sum_{j=1}^k\Big(\frac{1}{A_j} \avec_j - \frac{1}{A_{j-1}}\avec_{j-1}\Big)
\]
and the second from 
\[
\Pmat_k \bmu = \sum_{j=1}^k\Pmat_{\evec_k}\bmu= \sum_{j=1}^kC_j\Big(\frac{1}{A_j} \avec_j - \frac{1}{A_{j-1}}\avec_{j-1}\Big),
\]
which we see from \eqref{eqn:Pek} of Lemma \ref{lem:Ck}. Now, $C_{m+1} = 0$ implies $\Xmat^T(\bmu - \bmu_m) = \zerovec$, where \eqref{eqn:sumproj} gives $\bmu_m = \Pmat_m \bmu$.  So we may write
\[
\Xmat^T(\bmu - \bmu_m) = \Xmat^T(\bmu - \Pmat_m\bmu) = \Xmat^T(\Imat - \Pmat_m)\bmu = \zerovec.
\]
Letting $\Pmat_\Xmat$ denote the orthogonal projection onto the column space of $\Xmat$, we may from the above write 
\[
\Pmat_\Xmat(\Imat - \Pmat_m)\bmu = \zerovec \iff \Pmat_\Xmat\bmu = \Pmat_m \bmu,
\]
which completes the proof.
\end{proof}

\begin{proof}{\bf of Lemma \ref{lem:whichtoenter}:}
To prove (i), choose any $j \notin \calA_k$ and write
\begin{align*}
    (\cvec_k)_j - \frac{C_k}{A_k}(\wvec_k)_j  =  \xvec_j^T(\bmu - \bmu_{k-1}) - \frac{C_k}{A_k}(\wvec_k)_j   
                    &= \xvec_j^T(\bmu - \Pmat_k \bmu + \Pmat_k \bmu - \bmu_{k-1}) - \frac{C_k}{A_k}(\wvec_k)_j\\
                    &= \xvec_j^T(\Imat - \Pmat_k)\bmu + \frac{C_k}{A_k} \xvec_j^T\avec_k  -     \frac{C_k}{A_k}(\wvec_k)_j \\
                    &= \xvec_j^T(\Imat - \Pmat_k)\bmu, 
\end{align*}            
using $\Pmat_k \bmu - \bmu_{k-1} = (C_k/A_k)\avec_k$ from \eqref{eqn:Pkmu} and $(\wvec_k)_j = \xvec_j^T\avec_k$. We have $A_k > (\wvec_k)r_{k,j}$ from the fact that $\gamma_{k,j} = (C_k - (\cvec_k)_j r_{k,j})/(A_k - (\wvec_k)r_{k,j}) > 0$ for all $j \notin \calA_k$, where the numerator is always positive. 
To prove (ii), note that $j_k$ will enter on step $k$ if it minimizes $\gamma_{k-1,j}$ over all $j \notin \calA_{k-1}$. Equivalently, $j_k$ will enter if it maximizes $C_{k-1} - \gamma_{k-1,j}A_{k-1}$ over all $j \notin \calA_{k-1}$.  We have
\begin{align*}
    C_{k-1} - \gamma_{k-1,j} A_{k-1} &= C_{k-1} - \Big(\frac{C_{k-1} - (\cvec_{k-1})_j r_{k-1,j}}{A_{k-1} - (\wvec_{k-1})_j r_{k-1,j}}\Big)A_{k-1}\\
    &=\frac{A_{k-1}(\cvec_{k-1})_j r_{k-1,j} - C_{k-1}(\wvec_{k-1})_j r_{k-1,j}}{A_{k-1} - (\wvec_{k-1})_j r_{k-1,j}}\\
    &=\frac{r_{k-1,j}((\cvec_{k-1})_j) - (C_{k-1}/A_{k-1})(\wvec_k)_j)}{1 - r_{k-1,j}\xvec_j^T\avec_{k-1}/A_{k-1}} \\
    &=\frac{|\xvec_j^T(\Imat - \Pmat_{k-1})\bmu|}{1 - r_{k-1,j}\xvec_j^T\avec_{k-1}/A_{k-1}} 
    = C_{k,j}.
\end{align*}
The denominator of $C_{k,j}$ is positive since $\gamma_{k-1,j} > 0$ for all $j \notin \calA_{k-1}$. 
Claim (iii) follows from Proposition \ref{prop:LARknown}(iii) and from \eqref{eqn:signdetermine}.
\end{proof}

\section{Proofs of Section 4 results}
\label{sec:proofsasymp}

We will need the following result to prove Theorem \ref{thm:larconsistency}:

\begin{lemma}
\label{lem:denombound}
At each step $k$ of $\LAR(\Xmat,\bmu)$, we have
\begin{equation}
\label{eqn:denomboundfinite}
1 - r_{k,j}(\wvec_k)_j/A_k \geq 1 - \frac{|(\cvec_k)_j|}{C_k}
\end{equation}
for all $j \notin \calA_k$.
\end{lemma}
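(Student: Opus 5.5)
The inequality \eqref{eqn:denomboundfinite} should follow directly from the definition of the sign $r_{k,j}$ in \eqref{eqn:gammak}, so I plan a short algebraic argument rather than anything structural. Throughout I work at a step $k$ where Algorithm~\ref{alg:lar} has not stopped, so that $C_k>0$. Also $A_k>0$, since $A_k^{-2}=\onevec^T(\Xmat_k^T\Xmat_k)^{-1}\onevec$ is a positive definite quadratic form. Both facts are needed to divide and multiply by $C_k$ and $A_k$ without flipping inequalities.

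First I rewrite the target. Multiplying through by $C_k>0$, \eqref{eqn:denomboundfinite} is equivalent to
\[
r_{k,j}\,\frac{C_k}{A_k}(\wvec_k)_j \le |(\cvec_k)_j| .
\]
The key identity comes from the definition $r_{k,j}=\sign\big((\cvec_k)_j-(C_k/A_k)(\wvec_k)_j\big)$. By definition of the sign function it gives
\[
r_{k,j}\Big((\cvec_k)_j-\frac{C_k}{A_k}(\wvec_k)_j\Big)=\Big|(\cvec_k)_j-\frac{C_k}{A_k}(\wvec_k)_j\Big|\ge 0 .
\]

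I then rearrange this to obtain
\[
r_{k,j}\frac{C_k}{A_k}(\wvec_k)_j\le r_{k,j}(\cvec_k)_j\le |(\cvec_k)_j|,
\]
where the last step uses $|r_{k,j}|\le 1$. This covers all three cases $r_{k,j}\in\{1,-1,0\}$ simultaneously. When $r_{k,j}=0$ the claim is simply $1\ge 1-|(\cvec_k)_j|/C_k$, which is trivial. Dividing the displayed inequality by $C_k>0$ and subtracting from $1$ yields \eqref{eqn:denomboundfinite}.

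The only genuine point of care is the positivity of $C_k$ and $A_k$. If $C_k=0$ the algorithm has stopped and the statement is vacuous or undefined, so I restrict to non-terminal steps. Beyond that, no earlier lemma is needed. As a remark, combining the result with $|(\cvec_k)_j|<C_k$ for $j\notin\calA_k$ recovers the positivity of the denominator already noted in Lemma~\ref{lem:whichtoenter}(i), and it quantifies that positivity with the margin used in (M1).
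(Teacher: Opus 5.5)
Your proposal is correct and is essentially the paper's proof. The paper splits into the three cases $r_{k,j}\in\{1,-1,0\}$ and in each uses the defining sign to get $r_{k,j}(\wvec_k)_j/A_k \le r_{k,j}(\cvec_k)_j/C_k \le |(\cvec_k)_j|/C_k$; you compress this into one line via $r_{k,j}x=|x|\ge 0$, and you also check $C_k, A_k>0$, which the paper leaves implicit.
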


\begin{proof}{\bf of Lemma \ref{lem:denombound}:}
The case $(\wvec_k)_j < (A_k/C_k)(\cvec_k)_j$ corresponds to $r_{k,j} = 1$, in which we may write
\[
1 - r_{k,j}(\wvec_k)_j/A_k = 1 - (\wvec_k)_j/A_k > 1 - \frac{A_k}{C_k}(\cvec_k)_j/A_k \geq 1 - \frac{|(\cvec_k)_j|}{C_k}.
\]
The case $(\wvec_k)_j > (A_k/C_k)(\cvec_k)_j$ corresponds to $r_{k,j} = -1$, in which
\[
1 - r_{k,j}(\wvec_k)_j/A_k = 1 + (\wvec_k)_j/A_k > 1 + \frac{A_k}{C_k}(\cvec_k)_j/A_k \geq 1 - \frac{|(\cvec_k)_j|}{C_k}.
\]
The case $(\wvec_k)_j = (A_k/C_k)(\cvec_k)_j$ corresponds to $r_{k,j} = 0$, in which $1 - r_{k,j}(\wvec_k)_j/A_k = 1 \geq 1 - |(\cvec_k)_j|/C_k$.
\end{proof}

\begin{proof}{\bf of Theorem \ref{thm:larconsistency}:} First introduce the diagonal matrices $\Dmat_n$ and $\Dmat$ with diagonal entries $\|\xvec_{nj}\|^{-1}$ and $\sigma_{jj}^{-1}$, respectively, where $\sigma_{jj}^{2}$ is diagonal entry $j$ of $\bSigma$, for $j=1,\dots,p$, such that $\Xmat = \Xmat_n \Dmat_n$ as in (R4) and (R2) gives $n^{1/2}\Dmat_n \to \Dmat$. Then (R2) gives 
\[
\Xmat^T\Xmat = \Dmat_n \Xmat_n^T\Xmat_n \Dmat_n = n^{1/2}\Dmat_n n^{-1}\Xmat_n^T\Xmat_n n^{1/2}\Dmat_n \to \Dmat \bSigma \Dmat = \Rmat
\]
as $n \to \infty$.  Second, note that $\yvec = \bmu + n^{-1/2}\berror_n$.

Now, for $k=1$, we have $\hat \cvec_1 = \Xmat^T\yvec$ and $\cvec_1 = \Xmat^T\bmu$, so that
\[
\max_{1 \leq j \leq p} |(\hat \cvec_1)_j - (\cvec_1)_j| \leq \|\hat \cvec_1 - \cvec_1\| = \|n^{-1/2}\Xmat^T\berror_n\| = n^{-1/2}\|n^{1/2}\Dmat_nn^{-1/2}\Xmat_n^T\berror_n\|\inprob 0
\]
as $n\to \infty$.  From here, since (M1) gives $|(\cvec_1)_{j_1}| \geq \max_{j \neq j_1} |(\cvec_1)_j| + \delta$, we have
\begin{align*}
\PP\big(|(\hat \cvec_1)_{j_1}| > & \max_{j \neq j_1} |(\hat \cvec_1)_j|\big) \geq \PP\big( \max_{1 \leq j \leq p} |(\hat\cvec_1)_j - (\cvec_1)_j| < \delta / 2 \big ) \to 1
\end{align*}
as $n\to \infty$, so that $j_1$ enters on step 1 with probability tending to 1, that is $\PP(\hat \calA_1 = \{j_1\}) \to 1$. On this event, $\hat s_1$ is set to $\sign( (\hat \cvec_1)_{j_1})$ while it also holds that 
$s_1 (\cvec_1)_{j_1} = C_1$.
 Hence, we have
\[
s_1(\hat \cvec_1)_{j_1} - C_1  = s_1((\hat \cvec_1)_{j_1} - (\cvec_1)_{j_1}) \inprob 0;
\]
this entails $\PP(s_1 (\hat \cvec_1)_{j_1} > \delta/2) \to 1$, as $C_1 > \delta$, and also  $\PP(\hat s_1 = s_1)\to 1$ as $n\to \infty$. From here,  $\hat C_1 - C_1 = \hat s_1 (\hat \cvec_1)_{j_1}  -  s_1 (\hat \cvec_1)_{j_1}  + s_1((\hat \cvec_1)_{j_1} - (\cvec_1)_{j_1})\inprob 0$  as $n\to \infty$ too.
The above establishes (i), (ii), and (iii) for step $k = 1$.
Now, suppose (i), (ii), and (iii) hold on steps $1,\dots,k$, where $1 \leq k \leq m-1$. Then, conditionally on the event
\begin{align*}
\calL_k = \cap_{i=1}^k\{& \hat \calA_{i}\setminus\hat\calA_{i-1} = \{j_{i}\} \text{ and } \hat s_i = s_i\}
\end{align*}
we will show
\begin{enumerate}[label=(\alph*)] \itemsep0cm
    \item $P( \hat \calA_{k+1}\setminus \hat\calA_{k} = \{j_{k+1}\} \text{ and } \hat s_{k+1} = s_{k+1}) \to 1$
    % $P( \hat \calA_{k+1} = \{j_1,\dots,j_{k+1}\} \text{ and } \hat s_{j_{k+1}} = s_{j_{k+1}}) \to 1$,\\
    \item $\hat C_{k+1} - C_{k+1} \inprob 0$, and
    \item $\max_{j \notin \calA_{k+1}} |(\hat \cvec_{k+1})_j - (\cvec_{k+1})_j | \inprob 0$
\end{enumerate}
as $n \to \infty$. This will establish, by induction, that (i), (ii), and (iii) hold for all $k=1,\dots,m$.
Before proceeding, we must establish some bounds: Let $C$ and $K$ be constants such that, for large enough $n$, we have
\begin{equation}
\label{eqn:Ckbound}
C_k \leq C < \infty \quad \text{ and } \quad \|A_k^{-1}\wvec_k\|^2 \leq K < \infty
\end{equation}
for all $k=1,\dots,m$.  Assumption \ref{assum:regassumptions} guarantees the existence of such a $C$ and a $K$ since
\[
C_1 = \|\Xmat^T\bmu\|_\infty = \|n^{1/2}\Dmat_n n^{-1} \Xmat_n^T\Xmat_n \bbeta\| \to \|\Dmat\bSigma\bbeta\|_\infty < \infty
\]
as $n \to \infty$, where $C_1$ is the maximum of all $C_k$, $k =1,\dots,m$. Also, if we fix $\calA_k = \{j_1,\dots,j_k\}$ and the signs $s_1,\dots,s_k$, we have
\begin{align*}
A_k^{-1}\wvec_k  = A_k^{-1}\Xmat^T\avec_k = \Xmat^T\Xmat_k(\Xmat_k^T\Xmat_k)^{-1}\onevec  &= \Xmat^T\Xmat_{\calA_k}(\Xmat_{\calA_k}^T\Xmat_{\calA_k})^{-1}(s_1,\dots,s_k)^T \\
                &\to \Rmat_{\{1,\dots,p\},\calA_k}(\Rmat_{\calA_k,\calA_k})^{-1}\svec_k,
\end{align*}
as $n \to \infty$, where $\svec_k \equiv (s_1,\dots,s_k)^T$. Such a $C$ as in \eqref{eqn:Ckbound}, together with the bound in \eqref{eqn:denomboundfinite} and (M1), gives the bound
\begin{equation}
\label{eqn:denomboundasymp}
1 - r_{k,j}(\wvec_k)_j/A_k > \frac{1}{C_k}(C_k - |(\cvec_k)_j|) \geq \frac{\delta}{C}
\end{equation}
for all $j \notin \calA_k$, which holds for large enough $n$.
For each $k=1,\dots,m$, let $\Pmat_k$ be the orthogonal projection onto the span of $\{\xvec_{j_1},\dots,\xvec_{j_k}\}$ and define
\begin{equation}
\label{eqn:mujkdef}
\mu_{k,j} \equiv \xvec_j^T(\Imat - \Pmat_k)\bmu
\end{equation}
for each $k=1,\dots,m$ and $j \notin \calA_k$.
Then \eqref{eqn:Ckplus1j} can be rewritten, according to \eqref{eqn:projkj}, as
\[
C_{i+1,j} = \frac{r_{i,j}\mu_{i,j}}{1 - r_{i,j}(\wvec_i)_j/A_i}, \quad r_{i,j} = \sign(\mu_{i,j}).
\]
Now, for $k=1,\dots,m-1$ and $j \notin \calA_k\cup \{j_{k+1}\}$, we have
\[
\frac{C_{k+1,j_{k+1}} - C_{k+1,j}}{\gamma_{k,j_{k+1}} - \gamma_{k,j}} = -A_k,
\]
as can be seen by studying Figure \ref{fig:margincond2}, from which we can obtain
\begin{equation}
\label{eqn:Ckplus1margin}
C_{k+1,j_{k+1}} - C_{k+1,j} \geq \delta
\end{equation}
by (M2).  This also gives $C_{k+1,j_{k+1}} \geq \delta$, or
\[
\frac{|\mu_{k+1,j_{k+1}}|}{1 - s_{k+1}(\wvec_{k+1})_{j_{k+1}}/A_{k+1}} \geq \delta.
\]
from which we can obtain, for large enough $n$, the bound
\begin{equation}
\label{eqn:mujkbound}
|\mu_{k+1,j_{k+1}}| > \frac{\delta^2}{C}
\end{equation}
by applying \eqref{eqn:denomboundasymp} to the denominator.
Lastly, provided $\hat \calA_k = \{j_1,\dots,j_k\} \text{ and } \hat s_i = s_i \text{ for } i = 1,\dots,k$, we may write $\hat C_{k+1,j}$ for each $j \notin \calA_k$ as
\[
\hat C_{k+1,j} = \frac{\hat r_{k,j}(\mu_{k,j} + \varepsilon_{k,j})}{1 - \hat r_{k,j}(\wvec_k)_j/A_k}, \quad \hat r_{k,j} = \sign(\mu_{k,j} + \varepsilon_{k,j}),
\]
where $\varepsilon_{k,j} \equiv n^{-1/2}\xvec_j^T(\Imat - \Pmat_k)\berror_n$.
Importantly, we have $\varepsilon_{k,j}\inprob 0$ for each $j \notin \calA_k$,  $k=1,\dots,m$, since $\E \varepsilon^2_{k,j}  = \xvec_j^T(\Imat - \Pmat_k)\xvec_j n^{-1}\sigma^2\leq n^{-1}\sigma^2 \to 0$ as $n \to \infty$.
We are now ready to proceed with the proof. 
Assume the event $\calL_k$ occurs and that $n > n_0$ is large enough for the inequalities in \eqref{eqn:Ckbound}, \eqref{eqn:denomboundasymp}, and \eqref{eqn:mujkbound} to hold. 
First we will show
\begin{equation}
\label{eqn:Ckplus1consistent}
\hat C_{k+1,j_{k+1}} - C_{k+1,j_{k+1}} \inprob 0
\end{equation}
as $n \to \infty$.  We begin by writing
\[
|\hat C_{k+1,j_{k+1}} - C_{k+1,j_{k+1}} | = \Big|\frac{\hat r_{k,j_{k+1}}(\mu_{k,j_{k+1}} + \varepsilon_{k,j_{k+1}})}{1 - \hat r_{k,j_{k+1}}(\wvec_k)_j/A_k} - \frac{s_{k+1}\mu_{k,j_{k+1}}}{1 - s_{k+1}(\wvec_k)_j/A_k}\Big|
\]
and noting that $\hat r_{k,j_{k+1}} = s_{k+1}$ if $|\varepsilon_{k,j_{k+1}}| < |\mu_{k,j_{k+1}}|$, which is ensured if $|\varepsilon_{k,j_{k+1}}| < \delta^2/C$ by the bound in \eqref{eqn:mujkbound}. On this event, the bound $1 - s_{k+1}(\wvec_k)_j/A_k >\delta/C$ from \eqref{eqn:denomboundasymp} ensures that the right hand side is less than $(C/\delta)|\varepsilon_{k,j_{k+1}}|$. Thus for any $\epsilon > 0$, we have
\begin{align*}
    \PP(|\hat C_{k+1,j_{k+1}} - &C_{k+1,j_{k+1}} | < \epsilon) \geq \PP\Big( |\varepsilon_{k,j_{k+1}}| < \min \Big\{ \frac{\delta}{C}\epsilon,\frac{\delta^2}{C}\Big\}\Big) \to 1
\end{align*}
as $n \to \infty$, which establishes \eqref{eqn:Ckplus1consistent}. 
Incidentally
\begin{equation}
\label{eqn:signright}
\PP(\hat r_{k,j_{k+1}} = s_{k+1})\to 1
\end{equation}
as $n \to \infty$.
Now we want to ensure that the index $j_{k+1}$ enters on step $k+1$, which occurs if  $\hat C_{k+1,j_{k+1}} > \max_{j \notin \calA_k\cup \{j_{k+1}\}}\hat C_{k+1,j}$. We will show that this occurs with probability tending to $1$.  
It will be convenient, as a first step, to replace the bound in \eqref{eqn:Ckplus1margin} with the bound
\begin{equation}
\label{eqn:replacementbound}
C_{k+1,j_{k+1}} - \frac{\hat r_{k,j}\mu_{k,j}}{1 - \hat r_{k,j}(\wvec_k)_j / A_k} \geq \delta,
\end{equation}
for all $j \notin \calA_k \cup \{j_{k+1}\}$, which holds by the fact that
\begin{equation}
\label{eqn:wrongsigninequality}
\frac{\hat r_{k,j}\mu_{k,j}}{1 - \hat r_{k,j}(\wvec_k)_j / A_k} \leq \frac{ r_{k,j}\mu_{k,j}}{1 -  r_{k,j}(\wvec_k)_j / A_k} = C_{k+1,j}.
\end{equation}
The inequality in \eqref{eqn:wrongsigninequality} is due to the inequalities $\hat r_{k,j}(\mu_{k,j} + \varepsilon_{k,j}) \geq 0$ and $r_{k,j}\mu_{k,j} \geq 0$ and
\[
\hat C_{k+1,j} = \frac{\hat r_{k,j}(\mu_{k,j} + \varepsilon_{k,j})}{1 - \hat r_{k,j}(\wvec_k)_j / A_k} \geq 0 \quad  \text{ and } \quad C_{k+1,j} = \frac{ r_{k,j}\mu_{k,j}}{1 -  r_{k,j}(\wvec_k)_j / A_k} \geq 0,
\]
which imply $1 - \hat r_{k,j}(\wvec_k)_j / A_k > 0$ and $1 - r_{k,j}(\wvec_k)_j / A_k > 0$. So  $\hat r_{k,j} = (-1)r_{k,j}$ will lead to
\[
\frac{\hat r_{k,j}\mu_{k,j}}{1 - \hat r_{k,j}(\wvec_k)_j / A_k}  \leq 0.
\]
Now we will show
\begin{equation}
\label{eqn:Ckplus1jconsistent}
\hat C_{k+1,j} - \frac{\hat r_{k,j}\mu_{k,j}}{1 - \hat r_{k,j}(\wvec_k)_j / A_k} \inprob 0
\end{equation}
as $n \to \infty$ for all $j \notin \calA_k\cup\{j_{k+1}\}$. To prove \eqref{eqn:Ckplus1jconsistent}, it is necessary to show that $1 - \hat r_{k,j_{k+1}}(\wvec_k)_j/A_k$ is bounded from below with probability approaching $1$. Using \eqref{eqn:denomboundasymp}, on the event $\calL_k$ we have
\[
1 - \hat r_{k,j_{k+1}}(\wvec_k)_j/A_k \geq \frac{1}{\hat C_k}(\hat C_k - |(\hat \cvec_k)_j|),
\]
and since the inductive hypothesis gives
\[
\frac{1}{\hat C_k}(\hat C_k - |(\hat \cvec_k)_j|) - \frac{1}{ C_k}( C_k - |(\cvec_k)_j|) \inprob 0,
\]
the bound $
( C_k - |(\cvec_k)_j|)/C_k > \delta/C$
ensures
\begin{equation}
\label{eqn:denomboundprob}
\PP(1 - \hat r_{k,j_{k+1}}(\wvec_k)_j/A_k > \frac{\delta}{2C}) \to 1
\end{equation}
as $n \to \infty$. Now we write
\[
\Big|\hat C_{k+1,j} - \frac{\hat r_{k,j}\mu_{k,j}}{1 - \hat r_{k,j}(\wvec_k)_j / A_k}\Big| = \Big|\frac{\hat r_{k,j}\varepsilon_{k,j}}{1 - \hat r_{k,j}(\wvec_k)_j/A_k}\Big|.
\]
From here, we see that for any $\epsilon > 0$, we have
\[
\PP\Big(\Big|\hat C_{k+1,j} -  \frac{\hat r_{k,j}\mu_{k,j}}{1 - \hat r_{k,j}(\wvec_k)_j / A_k}\Big| < \epsilon\Big)
 \geq \PP\Big( |\varepsilon_{k,j}| < \frac{\delta}{2C}\epsilon ~\cap ~ 1 - \hat r_{k,j}(\wvec_k)_j / A_k > \frac{\delta}{2C}\Big)\to 1\]
as $n \to \infty$, establishing \eqref{eqn:Ckplus1jconsistent}.
Now the bound in \eqref{eqn:replacementbound} together with \eqref{eqn:Ckplus1consistent} and \eqref{eqn:Ckplus1jconsistent} allow us to write
\begin{eqnarray*}
   && \PP( \hat C_{k+1,j_{k+1}} >  \max_{j \notin \calA_k\cup\{j_{k+1}\}}  \hat C_{k+1,j}) \\
    & \geq &\PP\Big( |\hat C_{k+1,j_{k+1}} - C_{k+1,j_{k+1}} | < \delta / 2  ~\cap~ \max_{j \notin \calA_k\cup\{j_{k+1}\}} \Big|\hat C_{k+1,j} - \frac{\hat r_{k,j}\mu_{k,j}}{1 - \hat r_{k,j}(\wvec_k)_j / A_k} \Big| < \delta / 2\Big)  \to 1
\end{eqnarray*}
as $n \to \infty$, so that $j_{k+1}$ enters on step $k+1$ with probability tending to 1, that is $\PP(\hat \calA_{k+1}\setminus \hat \calA_k = \{j_{k+1}\})\to 1$.  In consequence, $\hat s_{k+1}$ is set to $\hat r_{k,j_{k+1}}$ so that, by \eqref{eqn:signright}, $\PP(\hat s_{k+1} = s_{k+1}) \to 1$ and $\hat C_{k+1}$ is set to $\hat C_{k+1,j_{k+1}}$, which gives $\hat C_{k+1} - C_{k+1}\inprob 0$ as $n \to \infty$. Thus $(a)$ and $(b)$ are established.
Lastly, since $\bmu_k = \sum_{i=1}^k \gamma_i\avec_i$ and by the fact that $\gamma_i= (C_i - C_{i+1})/A_i$, we may write $\cvec_{k+1} = \Xmat^T(\bmu - \bmu_k)$ as
\begin{align*}
    \cvec_{k+1} = &~\Xmat^T\bmu - \sum_{i=1}^k \Big(\frac{s_i\mu_{i,j_i}}{1 - s_i(\wvec_i)_{j_i}/A_i} - \frac{s_{i+1}\mu_{i,j_{i+1}}}{1 - s_{i+1}(\wvec_{i+1})_{j_{i+1}}/A_{i+1}}\Big)A_i^{-1}\wvec_i.
\end{align*}
On the event $\calL_k$, provided $j_{k+1}$ enters on step $k+1$ with sign $s_{k+1}$, we will have
\begin{align*}
\hat \cvec_{k+1} - \cvec_{k+1} = &~n^{-1/2}\Xmat^T\berror_n -\sum_{i=1}^k \Big(\frac{s_i\varepsilon_{i,j_i}}{1 - s_i(\wvec_i)_{j_i}/A_i} - \frac{s_{i+1}\varepsilon_{i,j_{i+1}}}{1 - s_{i+1}(\wvec_{i+1})_{j_{i+1}}/A_{i+1}}\Big)A_i^{-1}\wvec_i.
\end{align*}
Bounding the denominators of the second term with \eqref{eqn:denomboundasymp} and invoking \eqref{eqn:Ckbound}, we see that for any $\epsilon > 0$, we have
\begin{align*}
\PP(\|\hat \cvec_{k+1}  - \cvec_{k+1}\|^2 < \epsilon)  
&\geq \PP\Big( \|n^{-1/2}\Xmat^T\berror_n\|^2 +
    \frac{C^2K^2}{\delta^2}\sum_{i=1}^k (s_i\varepsilon_{i,j_i} - s_{i+1}\varepsilon_{i,j_{i+1}})^2< \frac{\epsilon}{2} \\
    & \hspace{1in} \cap ~ \big\{\hat \calA_{k+1}\setminus \hat\calA_k = \{j_{k+1}\}  \text{ and } \hat s_{k+1} = s_{k+1} \big\}\Big) \to 1
\end{align*}
as $n\to \infty$. This establishes (c), so that (i), (ii), and (iii) are proven by induction.
To show $\hat C_k \inprob 0$ for $k > m$ it suffices to consider $k = m+1$.  Since $C_{m+1} = 0$, we have $C_{m+1,j} =  r_{m,j}\mu_{m,j}/(1 - r_{m,j}(\wvec_k)_j / A_k) = 0$ for each $j \notin \calA_m$ so that $\mu_{m,j} = 0$ for each $j \notin \calA_m$.  Therefore, on the event
\begin{equation}
\label{eqn:event}
\calL_m = \cap_{k=1}^m\{ \hat \calA_k\setminus\hat\calA_{k-1} = \{j_{k}\} \text{ and } \hat s_k = s_k\}
\end{equation}
we may write
\[
\hat C_{m+1,j} = \frac{\hat r_{m,j}\varepsilon_{m,j}}{1 - \hat r_{m,j}(\wvec_k)_j/A_k}
\]
for each $j \notin \calA_m$.  Since $\hat C_m - C_m \inprob 0$ and $\max_{j \notin \calA_m}|(\hat \cvec_m)_j - (\cvec_m)_j| \inprob 0$ we have, as in \eqref{eqn:denomboundprob}, that the denominator is bounded such that $ 
\PP\Big(1 - \hat r_{m,j}(\wvec_k)_j/A_k > \frac{\delta}{2C}\Big) \to 1$
as $n \to \infty$. Thus for each $\epsilon > 0$ we have
\[
\PP(\hat C_{m+1,j} < \epsilon) \geq \PP\Big(|\varepsilon_{m,j}| < \frac{\delta}{2C}\epsilon ~\cap~ 1 - \hat r_{m,j}(\wvec_k)_j/A_k > \frac{\delta}{2C}\Big) \to 1
\]
as $n \to \infty$ for all $j \notin \calA_m$, giving $\hat C_{m+1} \inprob 0$ as $n \to \infty$.
\end{proof}

\begin{proof}{\bf Proof of Theorem \ref{thm:Tasymp}:} Theorem \ref{thm:larconsistency} gives that the event $\calL_m$ in \eqref{eqn:event}
holds with probability tending to 1 as $n\to \infty$.  On $\calL_m$, the sample path angle $\hat A_k$, equi-angular vector $\hat \avec_k$, and sign $\hat s_k$ are equal to their population path counterparts $A_k$, $\avec_k$, and $s_k$, respectively, on steps $k=1,\dots,m$. On $\calL_m$ we may therefore write
\begin{equation}
\label{eqn:Tjk}
T_{nk} = s_k\Big(\frac{1}{A_{k}^{2}} - \frac{1}{A_{k - 1}^{2}}\Big)^{1/2}\sqrt{n}(\hat C_k - C_k),
\end{equation}
for each $k=1,\dots,m$ and
\[
\hat C_k - C_k = \Big(\frac{1}{A_k^2} - \frac{1}{A_{k-1}^2}\Big)^{-1}\Big(\frac{1}{A_{k}}\avec_{k} - \frac{1}{A_{k-1}}\avec_{k-1}\Big)^T\frac{1}{\sqrt{n}}\berror_n
\]
from \eqref{eqn:Ckangles}, noting that $\yvec - \bmu = n^{-1/2}\berror_n$. From here, \eqref{eqn:Aupdate} and \eqref{eqn:aupdate} and the scaling $\berror_n = n^{1/2}\berror$ give the simplification
\begin{equation}
\label{eqn:Tksimple}
T_{nk} = \frac{\evec_k^T\berror_n}{\|\evec_k\|},
\end{equation}
where $\evec_1,\dots,\evec_k$ are defined in \eqref{eqn:in} for $k=1,\dots,m$.  Then, noting that the vectors $\evec_{1},\dots,\evec_{m}$ are mutually orthogonal, we have
\[
(T_{n1},\dots,T_{nm})^T = \Big(\frac{1}{\|\evec_1\|}\evec_1,\dots,\frac{1}{\|\evec_m\|}\evec_m\Big)^T\berror_n \indist \calN(\zerovec,\sigma^2\Imat_m)
\]
as $n\to \infty$ under Assumption~\ref{assum:regassumptions}, which proves (i).
We next consider the asymptotic behavior of $T_{n,m+1},\dots,T_{np}$.  Beyond step $m$, there is no sequence of active sets which occurs with probability tending to 1, so we must consider all possible sequences of active sets.
Assume $\calL_m$ occurs and suppose the index $j \notin \calA_m$ enters the active set on step $m + 1$, such that $\hat \calA_{m+1}\setminus \calA_m = \{j\}$.  This would lead to $\hat \evec_{m+1} = (\Imat - \Pmat_m)\xvec_j$, so,  noting that $(\Imat - \Pmat_m)\bmu =\zerovec$, we may, as in \eqref{eqn:Tksimple},  write
\begin{equation}
\label{eqn:Tnmplus1}
T_{n,m+1} = \frac{\hat \evec_{m+1}^T\berror_n}{\|\hat \evec_{m+1}\|}= \frac{\xvec_j^T(\Imat - \Pmat_m)\berror_n}{\|(\Imat - \Pmat_m)\xvec_j\|} \quad \text{ if } \hat 
 \calA_{m+1}\setminus \calA_m = \{j\}.
\end{equation}
Still assuming the event $\calL_m$ occurs, suppose the variables $\pi_1,\dots,\pi_i$ enter on steps $m+1,\dots,m+i$ for some $i \geq 1$, and then the variable $j$ enters on step $m+i+1$.  This would lead to $\hat \evec_{m+i+1}$ as $\hat \evec_{m+i+1}= (\Imat - \Pmat_{m,\{\pi_1,\dots,\pi_i\}})\xvec_j$, where $\Pmat_{m,\{\pi_1,\dots,\pi_i\}}$ denotes the orthogonal projection onto the span of the columns $\{\xvec_j, j \in \calA_m\cup\{\pi_1,\dots,\pi_i\} \}$. So, just as in \eqref{eqn:Tnmplus1}, we may write
\begin{align}
\label{eqn:Tnmplusiplus1}
&T_{n,m+i+1} = \frac{\hat \evec_{m+i+1}^T\berror_n}{\|\hat \evec_{m+i+1}\|} = \frac{\xvec_j^T(\Imat - \Pmat_{m,\{\pi_1,\dots,\pi_i\}})\berror_n}{\|(\Imat - \Pmat_{m,\{\pi_1,\dots,\pi_i\}})\xvec_j\|} \\ \nonumber
& \quad \quad \quad \text{ if }
       \hat \calA_{m+i+1}\setminus \hat\calA_{m+i} = \{j\}
      \text{ and } \hat \calA_{m+i}\setminus \calA_m = \{\pi_1,\dots,\pi_i\}.
\end{align}

Now set $\Rmat_n = \Xmat^T\Xmat$ and $\Vmat_n = \Rmat_{n,\calA_m^C|\calA_m}$, and index the rows and columns of $\Vmat$ with the indices $j \notin \calA_m$.  Then, for a generic vector $\tvec = (t_{j},j \notin \calA_m)^T \in \mathbb{R}^{p-m}$ define for each $j \notin \calA_m$ the function $J_{nj}(\tvec) \equiv t_j$ and for each $j \notin \calA_m$ and each subset of indices $\{\pi_1,\dots,\pi_i\} \subset \{1,\dots,p\}\setminus\{\calA_m \cup \{j\}\}$, define the function
\[
J_{n,\{\pi_1,\dots,\pi_i\},j}(\tvec) \equiv t_j - (t_{\pi_1},\dots,t_{\pi_i})^T(\Vmat_{n,\{\pi_1,\dots,\pi_i\},\{\pi_1,\dots,\pi_i\}})^{-1}\Vmat_{n,\{\pi_1,\dots,\pi_i\},j}.
\]
Then define standardized versions of $J_{nj}(\tvec)$ and $J_{n,\{\pi_1,\dots,\pi_i\},j}(\tvec)$ as
\begin{align}\label{eqn:Inj}
I_{nj}(\tvec) &\equiv J_{nj}(\tvec)/\sqrt{\Vmat_{n,j,j}} \\\label{eqn:Inpij}
I_{n,\{\pi_1,\dots,\pi_i\},j}(\tvec) &\equiv J_{n,\{\pi_1,\dots,\pi_i\},j}(\tvec)/\sqrt{\Vmat_{n,\{j\}|\{\pi_1,\dots,\pi_i\}}}.
\end{align}
Then, defining the random vector $\zvec_n \equiv (Z_{n,j},j \notin \calA_m)^T$, where
\begin{equation}
\label{eqn:zvecentries}
Z_{nj} \equiv \xvec_j^T(\Imat - \Pmat_m)\berror_n, \quad j \notin \calA_m,
\end{equation}
we find we can restate \eqref{eqn:Tnmplus1} as 
$T_{n,m+1} = I_{nj}(\zvec_n)$ if $ \hat \calA_{m+1}\setminus \calA_m = \{j\}$ 
and \eqref{eqn:Tnmplusiplus1} as
$T_{n,m+i+1}  = I_{n,\{\pi_1,\dots,\pi_i\},j}(\zvec_n) $ if $  \hat \calA_{m+i+1}\setminus \hat\calA_{m+i} = \{j\}$ and  $\hat \calA_{m+i}\setminus \calA_m = \{\pi_1,\dots,\pi_i\}$. 
Next we show that on the event $\calL_m$ the order of entrance of the columns $j \notin \calA_m$ into the active set is determined by the random vector $\zvec_n$. For each $j \notin \calA_m$ define the scalar
\begin{equation}
\label{eqn:anj}
a_{nj} \equiv \frac{\xvec_j^T\avec_m}{A_m},
\end{equation}
and for each $j\notin \calA_m$ and each subset of indices $\{\pi_1,\dots,\pi_i\} \subset \{1,\dots,p\}\setminus\{\calA_m \cup \{j\}\}$, define a function of a generic vector $\tvec = (t_j, j \notin \calA_m)^T \in \mathbb{R}^{p-m}$ as
\begin{equation}
\label{eqn:anpij}
a_{n,\{\pi_1,\dots,\pi_i\},j}(\tvec) \equiv \frac{\xvec_j^T\avec_m(u_{n1}\xvec_{\pi_1},\dots,u_{ni}\xvec_{\pi_i})}{A_m(u_{n1}\xvec_{\pi_1},\dots,u_{ni}\xvec_{\pi_i})},
\end{equation}
where $\avec_m(u_{n1}\xvec_{\pi_1},\dots,u_{ni}\xvec_{\pi_i})$ and $A_m(u_{n1}\xvec_{\pi_1},\dots,u_{ni}\xvec_{\pi_i})$ are the equiangular vector and angle computed from the matrix $[\Xmat_m ~ u_{n1}\xvec_{\pi_1} \cdots u_{ni}\xvec_{\pi_i}]$, with signs $u_{n1},\dots,u_{ni}$ given by
\[
(u_{n1},\dots,u_{ni}) = \big(\sign(J_{n\pi_1}(\tvec)),\sign(J_{n,\{\pi_1\},\pi_2}(\tvec)),\dots, \sign(J_{n,\{\pi_1,\dots,\pi_{i-1}\},\pi_i}(\tvec)\big).
\]
Now, for each $j \notin \calA_m$ define the set
\begin{align} \label{eqn:Enj}
&\calE_{nj} \equiv \Bigg\{ \tvec = (t_l, l \notin \calA_m)^T\in \mathbb{R}^{p-m} :~\underset{j' \notin \calA_m}{\argmax}~\Big\{\frac{|J_{nj'}(\tvec)|}{1 - \sign(J_{nj'}(\tvec)) a_{nj'}}\Big\} = j \Bigg\}
\end{align}
and for each $j\notin \calA_m$ and each subset of indices $\{\pi_1,\dots,\pi_i\} \subset \{1,\dots,p\}\setminus\{\calA_m \cup \{j\}\}$ define the set
\begin{align}\label{eqn:Enpij}
&\calE_{n,\{\pi_1,\dots,\pi_i\},j} \equiv \Bigg\{ \tvec = (t_l, l \notin \calA_m)^T\in \mathbb{R}^{p-m} : \\ \nonumber
& \hspace{.2in}\underset{j'\notin \calA_m \cup\{\pi_1,\dots,\pi_i\}}{\argmax}~\Big\{\frac{|J_{n,\{\pi_1,\dots,\pi_i\},j'}(\tvec)|}{1 - \sign(J_{n,\{\pi_1,\dots,\pi_i\},j'}(\tvec)) a_{n,\{\pi_1,\dots,\pi_i\},j'}(\tvec)}\Big\} = j  \Bigg\}.
\end{align}
Next, for each permutation $\bpi = (\pi_1,\dots,\pi_{p-m})$ of the indices $j \notin \calA_m$ define the set
$
\calO_{n,\bpi} \equiv \calE_{n\pi_1}\cap\calE_{n,\{\pi_1\},\pi_2}\cap \dots \cap\calE_{n,\{\pi_1,\dots,\pi_{p-m-1}\},\pi_{p-m}}$,
which is the set such that when $\zvec_n \in \calO_{n,\bpi}$, the indices $\pi_1,\dots,\pi_{p-m} \notin \calA_m$ enter the sample path active set on the steps $m+1,\dots,p$, respectively.  Now we may write
\begin{equation}  \label{eqn:Tnkmplus1top}
(T_{n,m+1},\dots,T_{np}) = \sum_{\bpi \in \Pi} \Big(I_{n,\pi_1}(\zvec_n),I_{n,\{\pi_{1}\},\pi_{2}}(\zvec_n),
 \dots, I_{n,\{\pi_{1},\dots,\pi_{p-m-1}\},\pi_{p-m}}(\zvec_n) \Big)\mathbb{I}(\zvec_n \in \calO_{n,\bpi}).
\end{equation}
It then follows from (R2) that 
$\Vmat_{n,j,j}  \to \Vmat_{j,j}$,  
$\Vmat_{n,\{j\}|\{\pi_1,\dots,\pi_i\}} \to \Vmat_{\{j\}|\{\pi_1,\dots,\pi_i\}}$, and
\[(\Vmat_{n,\{\pi_1,\dots,\pi_i\},\{\pi_1,\dots,\pi_i\}})^{-1}\Vmat_{n,\{\pi_1,\dots,\pi_i\},j}  \to (\Vmat_{\{\pi_1,\dots,\pi_i\},\{\pi_1,\dots,\pi_i\}})^{-1}\Vmat_{\{\pi_1,\dots,\pi_i\},j}\]
as $n \to \infty$.  We may then define asymptotic counterparts to  $J_{nj}(\tvec)$ and $J_{n,\{\pi_1,\dots,\pi_i\},j}(\tvec)$ as $J_j(\tvec) \equiv t_j$ and 
$J_{\{\pi_1,\dots,\pi_i\},j}(\tvec) \equiv t_j - (t_{\pi_1},\dots,t_{\pi_i})^T(\Vmat_{\{\pi_1,\dots,\pi_i\},\{\pi_1,\dots,\pi_i\}})^{-1}\Vmat_{\{\pi_1,\dots,\pi_i\},j}$,
respectively, as well as asymptotic counterparts to their standardized versions $I_{nj}(\tvec)$ and $I_{n,\{\pi_1,\dots,\pi_i\},j}(\tvec)$ as
$I_j(\tvec)  \equiv t_j / \sqrt{\Vmat_{j,j}}$ and $
I_{\{\pi_1,\dots,\pi_i\},j}(\tvec)  \equiv J_{\{\pi_1,\dots,\pi_i\},j}(\tvec)/\sqrt{\Vmat_{\{j\}|\{\pi_1,\dots,\pi_i\}}}$.
% \begin{align*}
% I_j(\tvec) &= t_j / \Vmat_{j,j}^{1/2} \\
% I_{\{\pi_1,\dots,\pi_i\},j}(\tvec) &= J_{\{\pi_1,\dots,\pi_i\},j}(\tvec) /\Vmat_{\{j\}|\{\pi_1,\dots,\pi_i\}}^{1/2}
% \end{align*}
Then we have, for each $\tvec = (t_j, j \notin \calA_m)^T\in \mathbb{R}^{p-m}$, pointwise convergence such that
\begin{equation}
\label{eqn:Jconverge}
J_{nj}(\tvec) \to J_j(\tvec) \quad \text{ and } \quad J_{n,\{\pi_1,\dots,\pi_i\},j}(\tvec) \to J_{\{\pi_1,\dots,\pi_i\},j}(\tvec)
\end{equation}
as well as
\begin{equation}
\label{eqn:Iconverge}
I_{nj}(\tvec) \to I_j(\tvec) \quad \text{ and } \quad I_{n,\{\pi_1,\dots,\pi_i\},j}(\tvec) \to I_{\{\pi_1,\dots,\pi_i\},j}(\tvec)
\end{equation}
as $n \to \infty$ for each $j \notin \calA_m$ and each subset of indices $\{\pi_1,\dots,\pi_i\} \subset \{1,\dots,p\}\setminus\{\calA_m \cup \{j\}\}$.
Additionally, for each $j \notin \calA_m$ let $a_j \equiv \Rmat_{j,\calA_m}(\Rmat_{\calA_m,\calA_m})^{-1}(s_1,\dots,s_m)^T$ and for each subset of indices $\{\pi_1,\dots,\pi_i\} \subset \{1,\dots,p\}\setminus\{\calA_m \cup \{j\}\}$ define for a vector $\tvec = (t_j , j \notin \calA_m)^T\in \mathbb{R}^{p-m}$ the function
\[
a_{\{\pi_1,\dots,\pi_i\}}(\tvec) \equiv \Rmat_{j,\{\calA_m,\pi_1,\dots,\pi_i\}}(\Rmat_{\{\calA_m,\pi_1,\dots,\pi_i\},\{\calA_m,\pi_1,\dots,\pi_i\}})^{-1} (s_1,\dots,s_m,u_1,\dots,u_i)^T,
\]
where
$(u_{1},\dots,u_{i}) =\big(\sign(J_{\pi_1}(\tvec)),\sign(J_{\{\pi_1\},\pi_2}(\tvec)),\dots, \sign(J_{\{\pi_1,\dots,\pi_{i-1}\},\pi_i}(\tvec)\big)$.
Then (R2) gives $a_{nj} \to a_j$ for each $j \notin \calA_m$ as well as the pointwise convergence  $a_{n,\{\pi_1,\dots,\pi_i\}}(\tvec) \to a_{\{\pi_1,\dots,\pi_i\}}(\tvec)$ for each $\tvec = (t_j , j \notin \calA_m)^T \in \mathbb{R}^{p-m}$ for each $j \notin \calA_m$ and each subset of indices $\{\pi_1,\dots,\pi_i\} \subset \{1,\dots,p\}\setminus\{\calA_m \cup \{j\}\}$.
From here we define for each $j \notin \calA_m$ an asymptotic version of $\calE_{nj}$ as
\begin{align} \label{eqn:Ej}
&\calE_j \equiv \Big\{ \tvec =(t_j, j \notin \calA_m) \in \mathbb{R}^{p-m} :~\underset{j'\notin \calA_m}{\argmax}~\Big\{\frac{|J_{j'}(\tvec)|}{1 - \sign(J_{j'}(\tvec)) a_{j'}}\Big\}  = j\Big\}
\end{align}
and for each $j \notin \calA_m$ and each subset of indices $\{\pi_1,\dots,\pi_i\} \subset \{1,\dots,p\}\setminus\{\calA_m \cup \{j\}\}$ an asymptotic version of $\calE_{n,\{\pi_1,\dots,\pi_i\},j}$ as
\begin{align}\label{eqn:Epij}
&\calE_{\{\pi_1,\dots,\pi_i\},j} \equiv \Big\{ \tvec = (t_j,j\notin\calA_m)\in \mathbb{R}^{p-m} : \\ \nonumber
&\underset{j'\notin \calA_m \cup\{\pi_1,\dots,\pi_i\}}{\argmax}~\Big\{\frac{|J_{\{\pi_1,\dots,\pi_i\},j'}(\tvec)|}{1 - \sign(J_{\{\pi_1,\dots,\pi_i\},j'}(\tvec)) a_{\{\pi_1,\dots,\pi_i\},j'}(\tvec)} \Big\} = j \Big\}
\end{align}
Moreover, define the set
\begin{equation}
\label{eqn:orderingset}
\calO_{\bpi} \equiv \calE_{\pi_1}\cap\calE_{\{\pi_1\},\pi_2}\cap \dots \cap\calE_{\{\pi_1,\dots,\pi_{p-m-1}\},\pi_{p-m}}.
\end{equation}
Then for each argument $\tvec = (t_j,j \notin \calA_m)\in \mathbb{R}^{p-m}$, we have
$\mathbb{I}(\tvec \in \calO_{n,\bpi}) \to \mathbb{I}(\tvec \in \calO_{\bpi})$
as $n \to \infty$, except possibly for a collection of arguments $\tvec$ having Lebesgue measure zero in $\mathbb{R}^{p-m}$.  Note also that the indicator function $\mathbb{I}(\tvec \in \calO_{\bpi})$ has a set of discontinuities with Lebesgue measure zero in $\mathbb{R}^{p-m}$.  Now, because $\zvec_n \indist \zvec$ and the latter random vector has a normal distribution on $\mathbb{R}^{p-m}$ that has measure zero on sets of Lebesgue measure zero, the extended continuous mapping theorem gives
\[
(T_{n,m+1},\dots,T_{np}) \indist\sum_{\bpi \in \Pi} \Big(I_{\pi_1}(\zvec),I_{\{\pi_{1}\},\pi_{2}}(\zvec), \dots, I_{\{\pi_{1},\dots,\pi_{p-m-1}\},\pi_{p-m}}(\zvec) \Big)\mathbb{I}(\zvec \in \calO_{\bpi})
\]
as $n \to \infty$, where $\Pi$ is the set of all permutations $\bpi = (\pi_1,\dots,\pi_{p-m})$ of the indices $j \notin \calA_m$.
This proves the second part of (iii).
Statement (ii) follows from noting that for each $k = 1,\dots,m$ we have $\Cov(\zvec_n, \evec_k^T\berror_n) = 0$ since $(\Imat - \Pmat_m)\xvec_{j_k} = 0$.  Since on event $\calL_m$, $T_{n1},\dots,T_{nm}$ are functions of $\evec_1^T\berror_n,\dots,\evec_m^T\berror_n$ and $T_{n,m+1},\dots,T_{np}$ are functions of $\zvec_n$, the result follows.
Now we prove the first part of (iii).  Assume $n$ is large and that the event $\calL_m$ holds.  Then, by the same arguments by which we could write \eqref{eqn:Tksimple}, \eqref{eqn:Tnmplus1}, and \eqref{eqn:Tnmplusiplus1}, we may write
% on steps $k = m+1,\dots,p$, let $\hat \evec_k = (\Imat - \hat \Pmat_{k-1})\hat \bvec_k$, where $\hat \bvec_k$ is the column of $\Xmat$ entering the active set on step $k$ of $\LAR(\Xmat,\yvec)$. Then using the identity \eqref{eqn:Ckangles} as well as \eqref{eqn:Aupdate} and \eqref{eqn:aupdate}, we may write
$
    \sum_{k=m+1}^p T_{nk}^2 / \sigma^2 = \frac{1}{\sigma^2}\sum_{k=m+1}^p \frac{(\hat \evec_k^T\berror_n)^2}{\|\hat \evec_k\|^2}$,
where $\hat \evec_k = (\Imat - \hat \Pmat_{k-1})\hat \hvec_k$, where $\hat \hvec_k$ is the column of $\Xmat$ entering the active set on step $k$ of $\LAR(\Xmat,\yvec)$ for steps $k=m+1,\dots,p$. Now, on the event $\calL_m$ the vectors $\hat \evec_{m+1},\dots,\hat\evec_p$ form an orthogonal basis for the column space of the matrix $(\Imat - \Pmat_m)\Xmat$, so that $
\frac{1}{\sigma^2} \sum_{k=m+1}^p T_{nk}^2 = \frac{1}{\sigma^2} \berror_n^T\Pmat\berror_n$,
where $\Pmat$ denotes the orthogonal projection onto the column space of $(\Imat - \Pmat_m) \Xmat$. We see that this converges in distribution to a random variable having the $\chi^2_{p-m}$ distribution.
\end{proof}

\begin{proof}{\bf of Theorem \ref{thm:mest}:}
 Note first that  for $\hat \sigma_n^2 = (n-p)^{-1} \yvec_n^T(\Imat - \Pmat_\Xmat)\yvec_n \equiv (n-p)^{-1} \berror_n^T(\Imat - \Pmat_\Xmat)\berror_n$ from 
(\ref{eqn:varest}), it holds that
$ n^{-1}\berror_n^T \berror_n \inprob \sigma^2>0$ as $n\to \infty$ by (R3), while $|(n-p) n^{-1}\hat \sigma_n^2 -  n^{-1}\berror_n^T \berror_n| = n^{-1}\berror_n^T \Pmat_{\Xmat} \berror_n$ where $n^{-1}\E(\berror_n^T \Pmat_{\Xmat} \berror_n) = p \sigma^2/n \rightarrow 0$ by (R1).  Consequently, we have $\hat \sigma_n^2 \inprob \sigma^2$ as $n \to \infty$. 
Consider the case $m = 0$.  Introducing the random variable $S_1 \sim \chi^2_p$, we have $\PP(S_1 > \chi^2_{p,n^{-1}}) = n^{-1}$ and $\hat S_{n,1}\indist S_1$ as $n \to \infty$  (i.e., the latter by Theorem~\ref{thm:Tasymp} and $\hat \sigma_n^2 \inprob \sigma^2$ with Slutsky's theorem), so we may write
\begin{align*}
\PP(\bar m \neq 0)  = \PP( \hat S_{n,1} > \chi^2_{p,n^{-1}}) 
&\leq |\PP( \hat S_{n,1} > \chi^2_{p,n^{-1}}) - \PP(S_1 > \chi^2_{p,n^{-1}})| + n^{-1} \\
&\leq \sup_{x \in \mathbb{R}}|\PP( \hat S_{n,1} \leq x  ) - \PP(S_1 \leq x)| + n^{-1}  \to 0  
\end{align*}
as $ n \to \infty$, using Polya's theorem on the convergence of distribution functions.  This establishes consistency, $\PP(\bar m = 0) \to 1$ as $n \to \infty$, in this case.  
Now consider the case $m \in \{1,\dots,p\}$.  We have
\begin{align} \nonumber
\PP(\bar m = m) &= \PP( \cap_{k=1}^m \{\hat S_{n,k} > \chi^2_{p-k+1,n^{-1}}\} \cap \{\hat S_{n,m+1} \leq \chi^2_{p-m,n^{-1}}\}) \\\nonumber
        &\geq \PP( \{\hat S_{n,m} > \chi^2_{p,n^{-1}} \} \cap \{\hat S_{n,m+1} \leq \chi^2_{p-m,n^{-1}}\}) \\ \label{eqn:need2bound}
        & \geq 1 - \PP(\hat S_{n,m} \leq \chi^2_{p,n^{-1}}) - \PP(\hat S_{n,m+1} > \chi^2_{p-m,n^{-1}}),
\end{align}    
where the second inequality comes from noting $\hat S_{n,m} \leq \dots \leq \hat S_{n,1}$ and that the quantile $\chi^2_{k,n^{-1}}$ is strictly increasing in $k$.  It is sufficient to show that the two probabilities in \eqref{eqn:need2bound} converge to zero as $n \to \infty$. To bound the second probability, we introduce $S_{m+1} \sim \chi^2_{p-m}$ and write
\begin{eqnarray*}
\PP(\hat S_{n,m+1} > \chi^2_{p-m,n^{-1}}) &\leq &|\PP(\hat S_{n,m+1} > \chi^2_{p-m,n^{-1}})  - \PP(S_{m+1} > \chi^2_{p-m,n^{-1}}) | + n^{-1}\\
& \leq &  \sup_{x\in\mathbb{R}}|\PP(\hat S_{n,m+1} \leq x)  - \PP(S_{m+1} \leq x) | + n^{-1}
\to 0
\end{eqnarray*}
as $n \to \infty$, using $\hat S_{n,m+1} \indist S_{m+1}$  (i.e., by Theorem~\ref{thm:Tasymp} and $\hat{\sigma}^2_n \inprob \sigma^2$)  with Polya's theorem. To bound the first probability in \eqref{eqn:need2bound}, we introduce $\phi_{n,m}  \equiv  n (A_m^{-2} - A_{m-1}^{-2}) C_m^2$   and  note also that $\hat S_{n,m} \geq \hat W_{n,m}$ and, on the event $\calL_m$ in \eqref{eqn:event}, $\hat W_{n,m} \geq (\phi_{n,m}^{1/2} -  |T_{nm} | )^2 /\hat\sigma_n^2 $ holds for $T_{nm}$ in (\ref{eqn:T}), in order to write
\begin{align*}
\PP(\hat S_{n,m} \leq \chi^2_{p,n^{-1}})  \;\leq \;\PP\left( (\phi_{n,m}^{1/2} -  |T_{nm}| )^2  /\hat\sigma_n^2\leq \chi^2_{p,n^{-1}}\right)   \;\leq\; 
\PP\left( (\phi_{n,m}^{1/2} -  |T_{nm} |)^2/\hat\sigma_n^2\leq q_{n,p}  \right)
\end{align*}
using above 
\begin{equation}
    \chi^2_{p,n^{-1}} \leq q_{n,p} \equiv  p + 2 \log n + 2 \sqrt{ p \log n}
\end{equation}
for all $n \geq 1$ (cf.~\cite{inglot2010inequalities}).  Since $  \hat\sigma_n^2 \inprob  \sigma^2>0$ as $n \to \infty$, it suffices now to show that 
$(\phi_{n,m}^{1/2} -  |T_{nm}|)^2/q_{n,p} \stackrel{p}{\rightarrow} \infty$ diverges as $n\to \infty$.  This follows using that $\{|T_{nm}|\}_{n=1}^\infty$ is stochastically bounded by $T_{nm} \indist \mathcal{N}(0,\sigma^2)$ as $n\to\infty$  along with the fact $q_{n,p}$ grows at the rate $\log n$  while, for large enough $n$, we have $\phi_{n,m}/n$ is bounded away from zero by
\begin{align*}
    \phi_{n,m} \equiv n(A_m^{-2} - A_{m-1}^{-2})C_m^2  
    &= n(1 - s_m \xvec_{j_m}^T\avec_{m-1}/A_{m-1})C_m^2/ \evec_m^T\evec_m \\
    & \geq n(\delta/C)^2\delta^2/  \evec_m^T\evec_m  
     \geq n (\delta^4/C^2)/(2 \Rmat_{\{j_m\}|\calA_{m-1}});
\end{align*}
 the second above equality comes from \eqref{eqn:Aupdate}, the first inequality comes from invoking $C_m > \delta$ by the condition (M1) as well as the inequality in \eqref{eqn:denomboundasymp}, and the third inequality comes from the fact that $\evec_m^T\evec_m = \xvec_{j_m}(\Imat - \Pmat_{m-1})\xvec_{j_m} \to \Rmat_{\{j_k\}|\calA_{m-1}}>0$ as $n \to \infty$.   
\end{proof}

\section{Proofs of Section 5 results}
\label{sec:bootstrapproof}

Here we present a result giving the asymptotic joint distribution of centered and scaled step coefficients $\sqrt{n}((\hat \bvec_k)_j - (\bvec_k)_j)$, $j \in \hat \calA_k$, $k = 1,\dots,\bar m$ as $n \to \infty$, where $\hat \bvec_{\bar m}$ is re-defined as $\hat \bvec_{\bar m} = \bar \bvec_{\bar m}$.  While the step coefficients have a complicated covariance structure, they are found to converge jointly to a multivariate normal distribution. To express the result, introduce, in the context of the prototypical LAR path in Definition~\ref{def:1}, the collection of random vectors $\{\vvec_k \in \mathbb{R}^k,k=1,\dots,m\}$ such that the vector $\vvec = (\vvec_1^T,\dots,\vvec_m^T)^T$ is a $m(m+1)/2\times 1$ multivariate normal random vector having mean zero and covariance matrix such that 
\begin{equation}
\label{eqn:covvk}
\Var(\vvec_k) = \sigma ^2 \Rmat^{-1}_{\calA_k,\calA_k}\Big[\Rmat_{\calA_k,\calA_k} + \lambda_k^2\Rmat_{\{j_{k+1}\}|\calA_k}\svec_k\svec_k^T\Big]\Rmat^{-1}_{\calA_k,\calA_k}
\end{equation}
for $k=1,\dots,m$ and 
\begin{equation}
\label{eqn:covvkvk}
\Cov(\vvec_k,\vvec_{k'}) = \sigma^2\Rmat^{-1}_{\calA_k,\calA_k}\Big[\Rmat_{\calA_k,\calA_k} - \lambda_k\svec_k\Rmat_{j_{k+1},\calA_{k'}|\calA_k}^T\Big]\Rmat^{-1}_{\calA_{k'},\calA_{k'}}
\end{equation}
for $1 \leq k < k' \leq m$, where $\svec_k$ is the vector of signs $\svec_k\equiv(s_1,\dots,s_k)^T$ and
\[
\lambda_k \equiv \frac{s_{k+1}}{1 - s_{k+1}\Rmat_{j_{k+1},\calA_k}\Rmat_{\calA_k,\calA_k}^{-1}\svec_k}
\]
for $k=1,\dots,m-1$ and $\lambda_m \equiv 0$. Then we have the following result:

\begin{theorem}
\label{thm:coef}
Under the Theorem \ref{thm:larconsistency} conditions and with $\bar m$ as in \eqref{eqn:mest} and $\hat \bvec_{\bar m}$ re-defined as $\hat \bvec_{\bar m} \equiv \bar \bvec_{\bar m}$, we have
\[
\sqrt{n}[((\hat \bvec_1)_{\hat\calA_1} - (\bvec_{1})_{\hat\calA_1})^T,\dots,((\hat \bvec_m)_{\hat\calA_m} - (\bvec_{m})_{\hat\calA_m})^T]^T \indist  \vvec \quad \mbox{as $n\to\infty$.}
\]
\end{theorem}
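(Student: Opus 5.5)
The plan is to reduce everything to a high-probability event on which the sample quantities are exact linear functions of the error vector $\berror_n$. Then a single multivariate CLT from (R3) gives the limit, and the remaining work is a covariance computation.

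\textbf{Step 1: restrict to a good event.} By Theorem~\ref{thm:larconsistency}(i), the event $\calL_m$ in \eqref{eqn:event} has probability tending to one. By Theorem~\ref{thm:mest}, $\PP(\bar m = m)\to 1$. Convergence in distribution is unaffected by events of vanishing probability, so it suffices to work on $\calL_m\cap\{\bar m=m\}$. On this event, for $k\le m$:
\begin{itemize}
\item $\hat\calA_k=\calA_k$ and $\hat s_k=s_k$;
\item $\hat \avec_k=\avec_k$, $\hat A_k=A_k$, and $\hat\Pmat_k=\Pmat_k$, because these depend only on the signed active columns.
\end{itemize}
By Remark~\ref{rem:1}, Lemma~\ref{lem:muk} holds along the sample path. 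Hence $\hat\yvec_k=\Pmat_k\yvec-(\hat C_{k+1}/A_k)\avec_k$, while $\bmu_k=\Pmat_k\bmu-(C_{k+1}/A_k)\avec_k$.

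For $k=m$, the redefinition $\hat\bvec_m\equiv\bar\bvec_m$ gives $\Xmat\hat\bvec_m=\Pmat_m\yvec$. Since $C_{m+1}=0$, we also have $\bmu_m=\Pmat_m\bmu$, so this case fits the same formula with the correction term dropped. This matches the convention $\lambda_m\equiv 0$.

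\textbf{Step 2: linear representation of the coefficients.} Use $\avec_k/A_k=\Xmat_k(\Xmat_k^T\Xmat_k)^{-1}\onevec=\Xmat_{\calA_k}(\Xmat_{\calA_k}^T\Xmat_{\calA_k})^{-1}\svec_k$. Together with Step 1 this gives
\[
(\hat\bvec_k-\bvec_k)_{\calA_k}=(\Xmat_{\calA_k}^T\Xmat_{\calA_k})^{-1}\big[\Xmat_{\calA_k}^T(\yvec-\bmu)-(\hat C_{k+1}-C_{k+1})\svec_k\big].
\]
For $k<m$, apply \eqref{eqn:Ckangles} to both paths with the same $\evec_{k+1}$, $s_{k+1}$, $\avec_k$ and $A_k$. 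This yields
\[
\hat C_{k+1}-C_{k+1}=\lambda_{kn}\,\evec_{k+1}^T(\yvec-\bmu),\qquad \lambda_{kn}\equiv s_{k+1}/(1-s_{k+1}\xvec_{j_{k+1}}^T\avec_k/A_k).
\]
By (R2) and the computation of $A_k^{-1}\wvec_k$ in the proof of Theorem~\ref{thm:larconsistency}, $\lambda_{kn}\to\lambda_k$. Multiplying by $\sqrt n$ and using $\yvec-\bmu=n^{-1/2}\berror_n$:
\[
\sqrt n(\hat\bvec_k-\bvec_k)_{\calA_k}=(\Xmat_{\calA_k}^T\Xmat_{\calA_k})^{-1}\big[\Xmat_{\calA_k}^T\berror_n-\lambda_{kn}\svec_k\,\evec_{k+1}^T\berror_n\big].
\]
Every vector involved lies in the column space of $\Xmat$. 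So the whole stacked vector equals $\Mmat_n\,\Xmat^T\berror_n$ for a deterministic matrix $\Mmat_n$ converging to a limit $\Mmat$.

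\textbf{Step 3: CLT and covariances.} From (R3) and $n^{1/2}\Dmat_n\to\Dmat$ we have $\Xmat^T\berror_n=(n^{1/2}\Dmat_n)\,n^{-1/2}\Xmat_n^T\berror_n\indist\calN(\zerovec,\sigma^2\Rmat)$. Slutsky then gives joint normality of the stacked vector, with covariance $\sigma^2\Mmat\Rmat\Mmat^T$. The entries come from the following inner-product identities and their limits:
\begin{itemize}
\item $\Xmat_{\calA_k}^T\evec_{k+1}=\zerovec$, since $\evec_{k+1}\perp$ the span of $\Xmat_{\calA_k}$.
\item $\evec_{k+1}^T\evec_{k'+1}=0$ for $k\ne k'$.
\item $\|\evec_{k+1}\|^2\to\Rmat_{\{j_{k+1}\}|\calA_k}$.
\item For $k<k'$, $\Xmat_{\calA_{k'}}^T\evec_{k+1}\to\Rmat_{\calA_{k'},j_{k+1}|\calA_k}$.
\item $\Xmat_{\calA_k}^T\Xmat_{\calA_{k'}}\to\Rmat_{\calA_k,\calA_{k'}}$.
\end{itemize}
These reproduce \eqref{eqn:covvk} directly, since the cross term vanishes. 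For \eqref{eqn:covvkvk} with $k<k'$, the term pairing $\evec_{k'+1}$ with $\Xmat_{\calA_k}$ vanishes because $\calA_k\subset\calA_{k'}$. The term $\Rmat^{-1}_{\calA_k,\calA_k}\Rmat_{\calA_k,\calA_{k'}}\Rmat^{-1}_{\calA_{k'},\calA_{k'}}$ should then be rewritten in the stated form.

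\textbf{Main obstacle.} I expect this last matching step to be the hardest part. It requires reconciling unsigned versus signed columns, and the embedding of $\calA_k$ inside $\calA_{k'}$ in the block inverses. Conventions may also need adjusting: for instance, whether $\Rmat_{\calA_k,\calA_k}$ in the middle of \eqref{eqn:covvkvk} should be read as $\Rmat_{\calA_k,\calA_{k'}}$. The probabilistic content is complete once Steps 1 and 2 are in place.
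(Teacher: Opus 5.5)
Your proposal is correct and follows essentially the same route as the paper. The paper also restricts to $\calL_m\cap\{\bar m=m\}$, uses Lemmas~\ref{lem:Ck} and~\ref{lem:muk} to write each block as $(\Xmat_{\calA_k}^T\Xmat_{\calA_k})^{-1}[\Xmat_{\calA_k}-\lambda_{kn}\evec_{k+1}\svec_k^T]^T\berror_n$, and obtains the covariances from the orthogonality of the $\evec_k$; your explicit $\Mmat_n\Xmat^T\berror_n$ representation with (R3) supplies the joint-normality step the paper leaves implicit.

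Your suspicion about \eqref{eqn:covvkvk} is right: the middle term must be $\Rmat_{\calA_k,\calA_{k'}}$, and the paper's proof has the same dimensional slip, writing $\Xmat_{\calA_k}^T\Xmat_{\calA_k}$ where $\Xmat_{\calA_k}^T\Xmat_{\calA_{k'}}$ is meant. So no further rewriting or sign reconciliation is needed.
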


\begin{proof}{\bf of Theorem \ref{thm:coef}:}
First, note that the vector $\bvec_k$ satisfying $\Xmat \bvec_k = \bmu_k$ and having entries $(\bvec_k)_j = 0$ for $j \notin \calA_k$ has non-zero entries given by
$
(\bvec_k)_{\calA_k} = (\Xmat_{\calA_k}^T\Xmat_{\calA_k})^{-1}\Xmat^T_{\calA_k}\bmu_k$.
Now, as $n\to \infty$, we will have $\hat \calA_k = \calA_k$ and $\hat s_k = s_k$ for $k = 1,\dots,m$ as well as $\bar m = m$ with probability approaching one.  Assuming this event holds, for any step $k < m$ we have
\begin{equation}
\label{eqn:rootnbk}
\sqrt{n}((\hat \bvec_k)_{\hat\calA_k}  - (\bvec_k)_{\hat \calA_k}) = (\Xmat_{\calA_k}^T\Xmat_{\calA_k})^{-1}\Xmat_{\calA_k}^T \sqrt{n}(\hat \yvec_k - \bmu_k),
\end{equation}
where Lemmas \ref{lem:Ck} and \ref{lem:muk} give
\begin{align*}
    \sqrt{n}(\hat \yvec_k - \bmu_k)  = \sqrt{n}\Pmat_k (\yvec - \bmu) - \frac{1}{A_k}\avec_k\sqrt{n}(\hat C_{k+1} - C_{k+1}) 
     = \Pmat_k\berror_n - \frac{1}{A_k}\avec_k \frac{s_{k+1}\evec_{k+1}^T\berror_n}{1 - s_{k+1}\xvec_{j_{k+1}}^T\avec_k/A_k},
\end{align*}
where we have used $\yvec = \bmu + n^{-1/2}\berror_n$. From here we may re-write \eqref{eqn:rootnbk} as
\[
\sqrt{n}((\hat \bvec_k)_{\hat\calA_k}  - (\bvec_k)_{\hat \calA_k}) = (\Xmat_{\calA_k}^T\Xmat_{\calA_k})^{-1}\Big[\Xmat_{\calA_k} - \Big(\frac{s_{k+1}}{1-s_{k+1}\xvec_{j_{k+1}}^T\avec_k/A_k}\Big)\evec_{k+1}\svec_k^T\Big]^T\berror_n,
\]
using $A_k^{-1}\avec_k = \Xmat_k(\Xmat_k^T\Xmat_k)^{-1}\onevec = \Xmat_{\calA_k}(\Xmat_{\calA_k}^T\Xmat_{\calA_k})^{-1}\svec_k$. Then
\begin{align} \nonumber
\Var(\sqrt{n}&((\hat \bvec_k)_{\hat\calA_k}  - (\bvec_k)_{\hat \calA_k})) \\ \label{eqn:varbk}
&= \sigma^2(\Xmat_{\calA_k}^T\Xmat_{\calA_k})^{-1}\Big[\Xmat_{\calA_k}^T\Xmat_{\calA_k} + \frac{\evec_{k+1}^T\evec_{k+1}}{(1-s_{k+1}\xvec_{j_{k+1}}^T\avec_k/A_k)^2} \svec_k\svec_k^T\Big](\Xmat_{\calA_k}^T\Xmat_{\calA_k})^{-1},
\end{align}
where we have used $\Xmat_{\calA_k}^T\evec_{k+1} = \zerovec$ and $\Var(\berror_n) = \sigma^2\Imat_n$. Now under Assumption \ref{assum:regassumptions}, as $n \to \infty$, $\Xmat_{\calA_k}^T\Xmat_{\calA_k} \to \Rmat_{\calA_k,\calA_k}$, $\evec_{k+1}^T\evec_{k+1} \to \Rmat_{j_{k+1}|\calA_k}$, and $\xvec_{j_{k+1}}^T\avec_k/A_k \to \Rmat_{j_{k+1},\calA_k}\Rmat_{\calA_k,\calA_k}^{-1}\svec_k$. Substituting these limits in \eqref{eqn:varbk} yields the covariance matrix in \eqref{eqn:covvk} for $k<m$.
For $k = m$ we have 
\[
\sqrt{n}((\hat \bvec_m)_{\hat\calA_m}  - (\bvec_m)_{\hat \calA_m}) = (\Xmat_{\calA_m}^T\Xmat_{\calA_m})^{-1}\Xmat_{\calA_m}^T \sqrt{n}(\yvec - \bmu_m) = (\Xmat_{\calA_m}^T\Xmat_{\calA_m})^{-1}\Xmat_{\calA_m}^T\berror_n,
\]
since we have re-defined $\hat \bvec_{\bar m}$ as $\hat \bvec_{\bar m} \equiv\bar \bvec_{\bar m} = (\Xmat_{\calA_{\bar m}}^T\Xmat_{\calA_{\bar m}})^{-1}\Xmat_{\calA_{\bar m}}^T\yvec$ and we assume $\bar m = m$ holds and since $\bmu_m = \bmu$ by the assumption (R4) that $\bmu$ lies in the column space of $\Xmat$. Now we have
$\Var(\sqrt{n}((\hat \bvec_m)_{\hat\calA_m}  - (\bvec_m)_{\hat \calA_m})) = \sigma^2 (\Xmat_{\calA_m}^T\Xmat_{\calA_m})^{-1} \to \sigma^2 \Rmat_{\calA_m,\calA_m}^{-1}$,
as $n \to \infty$, which matches the expression in \eqref{eqn:covvk} for $k=m$.
Lastly, for any steps $k,k'$ such that $1 \leq k < k'\leq m$ we may write
\begin{align*}
    \Cov(&\sqrt{n}((\hat \bvec_k)_{\hat\calA_k}  - (\bvec_k)_{\hat \calA_k}),\sqrt{n}((\hat \bvec_{k'})_{\hat\calA_{k'}}  - (\bvec_{k'})_{\hat \calA_{k'}})) \\
    &= \sigma^2(\Xmat_{\calA_k}^T\Xmat_{\calA_k})^{-1}\Big[\Xmat_{\calA_k}^T\Xmat_{\calA_k} - \frac{s_{k+1}}{(1-s_{k+1}\xvec_{j_{k+1}}^T\avec_k/A_k)} \svec_k\evec_{k+1}^T\Xmat_{\calA_{k'}}\Big](\Xmat_{\calA_{k'}}^T\Xmat_{\calA_{k'}})^{-1},
\end{align*}
using $\Xmat_{\calA_k}^T\evec_{k'+1} = \zerovec$ and $\evec_{k+1}^T\evec_{k'+1} = 0$. With $\evec_{k+1}^T\Xmat_{\calA_{k'}} \to \Rmat_{j_{k+1},\calA_{k'}|\calA_k}$ as $n \to \infty$, we see that the covariance approaches the expression in \eqref{eqn:covvkvk} as $n \to \infty$. 
\end{proof}

\begin{proof}{\bf of Theorem \ref{thm:bootstrap}:}
For context, recall that the original data responses $\yvec\equiv \bmu + \berror$ and design matrix $\Xmat$ are used as inputs to the LAR sample path (i.e., $\LAR(\Xmat,\yvec)$ from Algorithm~\ref{alg:lar_efron}) which estimates the LAR population path (i.e., $\LAR(\Xmat,\bmu)$ from Algorithm~\ref{alg:lar}).  
Recall also that bootstrap responses are defined as $\yvec^* \equiv \bar \bmu + \berror^*$, where the  bootstrap population mean  $\bar \bmu  \equiv \hat \Pmat_{\bar m} \yvec$   is the orthogonal projection matrix given by the columns $\{\xvec_j, j \in \hat \calA_{\bar m}\}$ active in the sample path at an estimated step $\bar m$.  With this background, the critical step in establishing the validity of bootstrap
is showing that the bootstrap analog  
$\LAR(\Xmat,\bar \bmu)$  is consistent for the structure of the original $\LAR(\Xmat,\bmu)$ population path, where the latter is   defined fundamentally  by a sequence of $m$ variable indices $j_1,\dots,j_m$ that  enter the active set $\calA_k$
of $\LAR(\Xmat,\bmu)$ in a one-by-one consecutive fashion on  steps $k=1,\dots,m$, respectively, 
with corresponding signs $s_1,\ldots,s_m$
and where   $m \leq p$ denotes the number of positive population step correlations  (i.e., $C_1>C_2>\ldots>C_m>0$) so that $C_k=0$ for any $m < k \leq p$.  Note that (i) the number $m$ of non-zero step correlations, (ii) the variable order over the first $m$ steps, and (iii) the variable signs are the attributes of 
 the population $\LAR(\Xmat,\bmu)$ path  that serve to completely determine the intricate limit distribution of the  statistical quantities $(T_{n 1},\ldots, T_{n p})$ 
 in (\ref{eqn:T}) by Theorem~\ref{thm:Tasymp};
the latter distribution essentially gives  the studentized sampling distribution $(\hat T_{n1},\ldots, \hat T_{np}) \equiv (  T_{n1},\ldots,  T_{np})/\hat \sigma_n $ that we seek to approximate by bootstrap (using that $\hat \sigma_n$ is a constant in large samples by $\hat \sigma_n \inprob \sigma>0$ as in the proof of Theorem~\ref{thm:mest}). Note that the exact values of the first $m$ population step correlations $C_1>\ldots>C_m>0$ are
not important to this limit distribution in Theorem~\ref{thm:Tasymp}, though these values are used in defining the centering 
of the  quantities $\{T_{nk}\}_{k=1}^m$ 
 in (\ref{eqn:T}).  Hence, in order to formally establish the validity of the bootstrap approximation, it then becomes necessary to show that the bootstrap rendition $\LAR(\Xmat,\bar\bmu)$ 
of the original LAR population path likewise shares the  structure of  consisting of (i) the same number $m$ of non-zero step correlations, (ii) the   same variable order $j_1,\dots,j_m$ over the first $m$ steps, and (iii) the same variable signs 
$s_1,\ldots,s_m$, where these features must technically hold with arbitrarily high probability for large $n$; furthermore, when this structure holds for the bootstrap and if $\bar C_1 >\ldots >\bar C_m>0=\bar C_{m+1}$ denote the corresponding bootstrap step correlations  from  $\LAR(\Xmat,\bar\bmu)$, it must also be shown
that $\bar C_k = \hat{C}_k$ over steps $k=1,\ldots,m$, where $\hat{C}_k$ denote the sample step  correlations from $\LAR(\Xmat,\yvec)$, due to the fact that $\bar C_k = \hat{C}_k$ is the centering used in defining the bootstrap analog quantities  $\{\hat T_{nk}^*\}_{k=1}^m$
over  non-zero 
step correlations.   
To show $\LAR(\Xmat,\bar\bmu)$  has the above properties, 
suppose first that the event 
in (\ref{eqn:event}) and the event that $\bar m =m$ both hold, which occurs with arbitrarily high probability for large $n$. These combined events entail that the original sample path of $\LAR(\Xmat,\yvec)$  admits variables $j_1,\dots,j_m$  one-by-one consecutively over the first $m$ steps with variable signs 
$s_1,\ldots,s_m$, respectively.  Consequently, it further holds that 
$\bar\bmu = \Pmat_m \yvec$ by $\hat \Pmat_{\bar m}= \Pmat_m$ and that the first $m$ sample step correlations from $\LAR(\Xmat,\yvec)$ are given by
\begin{equation}
\label{eqn:djn1}
\hat{C}_k = \frac{s_k \evec_{k}^T \yvec}{ (1 - s_k \xvec_{j_k}^T\avec_{k-1} / A_{k-1})}, \quad k=1,\ldots,m,
\end{equation}
where  $\evec_k\equiv  (\Imat - \Pmat_{k-1})\xvec_{j_k} $  by using (\ref{eqn:Ckangles}) of Lemma~\ref{lem:Ck}.
From the bootstrap path $\LAR(\Xmat,\bar\bmu = \Pmat_m \yvec)$, next let $\bar \bmu_k$ 
denote the approximation of $\bar \bmu$ at step $k=1,\ldots,m$ from Algorithm~\ref{alg:lar}  (i.e., applied to $\bar \bmu$ in place of $\bmu$)  where $\bar \bmu_0=\zerovec$ and, likewise,  let  $\bar C_k \equiv \max\{|(\bar \cvec_k)_j|\}$ and $\bar \calA \equiv \{j :|(\bar \cvec_k)_j| = \bar C_k \}$ , where $\bar \cvec_k \equiv \Xmat ^T(\bar \bmu - \bar \yvec_{k-1})$; further, let $\bar s_k$ denote the sign of the column of $\Xmat$ entering on step $k=1,\ldots,m$.
Now repeating the proof of Theorem~\ref{thm:larconsistency}  with $\bar \bmu = \Pmat_m \yvec = \bmu + \Pmat_m \berror$ (i.e.,  $\bmu =  n^{-1/2} \Xmat_n \bbeta = \Pmat_m \bmu$ here) in place of $\yvec = \bmu + \berror$, one finds that the same conclusions of Theorem~\ref{thm:larconsistency}(i)-(iii) hold upon replacing instances of ``$\hat{C}_k, \hat \cvec_k, \hat \calA_{k}, \hat{s}_k$" with analogs  
 ``$\bar{C}_k, \bar \cvec_k,\bar \calA_{k}, \bar s_k$" (that is, the proof does not essentially change upon replacing errors $\berror$ with $\Pmat_m \berror$, where the latter errors have a variance matrix of smaller spectral norm compared to the former errors).  From this, we  
can conclude that, with arbitrarily high probability for increasing $n$, it holds that  the path $\LAR(\Xmat,\bar\bmu)$ 
admits the   same variable order $j_1,\dots,j_m$ over the first $m$ steps with the same signs $s_1,\ldots,s_k$  as the original population path given by $\LAR(\Xmat,\bmu)$ or as the sample path 
given by $\LAR(\Xmat,\yvec)$;  when this feature holds,  we can determine that 
\begin{equation}
\label{eqn:djn2}
\bar{C}_k = \frac{s_k \evec_{k}^T  \bar\bmu}{ (1 - s_k \xvec_{j_k}^T\avec_{k-1} / A_{k-1})}=\frac{s_k \evec_{k}^T \yvec}{ (1 - s_k \xvec_{j_k}^T\avec_{k-1} / A_{k-1})} = \hat{C}_k, \quad k=1,\ldots,m,
\end{equation}
where  $\evec_k\equiv  (\Imat - \Pmat_{k-1})\xvec_{j_k} $,  by using (\ref{eqn:Ckangles}) of Lemma~\ref{lem:Ck}
for $\bar \bmu= \Pmat_m \yvec$ along with $\evec_{k}^T  \bar\bmu =\evec_{k}^T  \yvec $ for $k=1,\ldots,m$ and (\ref{eqn:djn1}).  
Likewise,  it must similarly follow that $\bar C_{m+1}=0$ or that  path $\LAR(\Xmat,\bar\bmu)$ has  $m$ non-zero step correlations; to see the latter, if $\bar C_{m+1}>0$ were true then  $\Xmat^T(\Imat - \Pmat_m) \bar \bmu $ would need to have a non-zero entry by  Lemma~\ref{lem:Ck} 
or Lemma~\ref{lem:whichtoenter}(ii), but 
 $\Xmat^T(\Imat - \Pmat_m) \bar \bmu =\zerovec$ by $\bar \bmu= \Pmat_m \yvec$.
We have now shown that the bootstrap rendition $\LAR(\Xmat,\bar\bmu)$, with arbitrarily high probability for increasing $n$,   has the required properties of (i) $m$ of non-zero step correlations, (ii) the  appropriate variable order $j_1,\dots,j_m$ over the first $m$ steps, and (iii) the proper variable signs 
$s_1,\ldots,s_m$, where additionally it holds that $\bar C_k = \hat C_k$ for each $k=1,\ldots,m$.  
 To complete proof of bootstrap distributional consistency, we use an argument to characterize convergence in probability through pointwise almost sure convergence along subsequences.  
Let $(\Omega, \mathcal{F}, \PP)$ denote a probability space for the original random vectors $(\Xmat_n, \berror_n)$, $n \geq 1$, in Assumption~\ref{assum:regassumptions}.  Let $\{n_j\}_{j \geq 1}$ denote an arbitrary subsequence of $\{n\}_{n \geq 1}$.
Now using the assumption that the bootstrap errors $\berror_n^*$ satisfy (R.3) in probability, along with the convergence in probability given in Theorem~\ref{thm:larconsistency} and Theorem~\ref{thm:mest} and developed above for the bootstrap path $\LAR(\Xmat,\bar \bmu )$, we may extract a further subsequence $\{n_\ell\}_{\ell \geq 1}$ of $\{n_j\}_{j \geq 1}$  and an event $A \in \mathcal{F}$ of probability one (i.e., $\PP(A)=1$) such that, for any given point $\omega \in A $, it holds pointwise along the subsequence  $(\Xmat_{n_\ell}(\omega), \berror_{n_\ell}(\omega)) \equiv (\Xmat_{n_\ell}, \berror_{n_\ell}) $, $n_\ell \geq 1$,  (suppressing dependence of notation on $\omega$ and sometimes $n_\ell$ in the following without loss of clarity) that:   (a) eventually for large $n_\ell$,  it holds that  
$\bar m = m$; the variables $j_1,\ldots,j_m$ with signs $s_1,\ldots,s_m$ enter stepwise in both $\LAR(\Xmat, \yvec)$
and $\LAR(\Xmat,\bar \bmu)$ for steps $k=1,\ldots,m$; and $\hat{C}_k=\bar C_k$ for each $k =1,\ldots,m$ while $\bar C_{m+1}=0$; (b) also, $\max_{1 \leq k \leq m}|\bar C_k - C_k| \rightarrow 0$,  $\max_{1 \leq k \leq m}\|\bar \cvec_k - \cvec_k\|\rightarrow 0$, and $\hat \sigma_{n_\ell}\rightarrow \sigma>0$   as $n_\ell \to \infty$; and (c) in the bootstrap world (or in terms of probability induced by resampling),     
$n_\ell^{-1/2}\Xmat_{n_\ell}^T\berror^*_{n_\ell} \indist \calN(\zerovec,\bSigma \sigma^2)$ and $ n_\ell^{-1}\berror_{n_\ell}^{*T}\berror_{n_\ell}^* \inprob \sigma^2$ as $n_\ell \to \infty$.  Part~(b)  ensures that, for a given $\omega \in A$, Assumptions (M1)-(M2) hold also.  Hence,    pointwise for $\omega \in A$, we have all the assumptions in place to apply the same proof of Theorem~\ref{thm:Tasymp}  to show that the bootstrap 
analog $(T_{n_\ell 1}^*,\ldots, T_{n_\ell p}^*)$, where 
\[
\hat T_{n_\ell k}^* \equiv \hat s_k^*\bigg(\frac{1}{\hat A_{k}^{*2}}- \frac{1}{\hat A_{k - 1}^{*2}}\bigg)^{1/2}\sqrt{n_\ell}\big( \hat C^*_k - \bar C_k \big), \quad k = 1,\dots,p,
\]
has  the same distributional limit as $n_\ell \to \infty$
as  the  limit 
given in   Theorem~\ref{thm:Tasymp} for the   quantities 
$(T_{n 1},\ldots, T_{n p})$ 
 from (\ref{eqn:T}).   
Additionally, along a given $\omega \in A$, we also have 
 $\hat \sigma_{n_\ell}^{*2} \equiv ({n_\ell}-p)^{-1} \yvec_{n_\ell}^{*T}(\Imat - \Pmat_\Xmat)\yvec_{n_\ell}^* \equiv ({n_\ell}-p)^{-1} \berror_{n_\ell}^{*T}(\Imat - \Pmat_\Xmat)\berror_{n_\ell}^*
\inprob \sigma^2$ as $n_\ell \to \infty$  due to the facts that $ n_\ell^{-1}\berror_{n_\ell}^{*T}\berror_{n_\ell}^* \inprob \sigma^2$ as $n_\ell \to \infty$  in addition to    $|({n_\ell}-p) {n_\ell}^{-1}\hat \sigma_{n_\ell}^{*2} -  {n_\ell}^{-1}\berror_{n_\ell}^{*T} \berror_{n_\ell}^*| = {n_\ell}^{-1}\berror_{n_\ell}^{*T} \Pmat_{\Xmat} \berror_{n_\ell}^* \inprob 0$  because ${n_\ell}^{-1}\E_*(\berror_{n_\ell}^T \Pmat_{\Xmat} \berror_{n_\ell}) = p \hat{\sigma}_{n_\ell}^2/{n_\ell} \rightarrow 0$  as  $n_\ell \to \infty$ in bootstrap expectation $\E_*$
using $\hat{\sigma}_{n_\ell}^2
\rightarrow \sigma^2$.
Consequently,  pointwise for $\omega \in A$,  the bootstrap quantities $(\hat T^*_{n_\ell 1},\ldots, \hat T^*_{ n_\ell p}) \equiv (\hat T^*_{{n_\ell}1},\ldots, \hat T^*_{{n_\ell}p})/\sigma_{n_\ell}^*$ 
will converge in distribution as $n_\ell \to \infty$ to a (continuous) distributional limit that matches that of 
  $(\hat T_{n 1},\ldots, \hat T_{n p}) \equiv (  T_{n 1},\ldots,  T_{n p})/\hat \sigma_n$ at the original data level; in other words,
  pointwise for $\omega \in A$,  we have
  \[
\sup_{\calB \in \calB(\mathbb{R}^p)} \Big|\PP_*\big((\hat T^*_{n_\ell 1},\dots,\hat T^*_{n_\ell p}) \in \calB\big) -  \PP\big((\hat T_{n_\ell 1},\dots,\hat T_{n_\ell p})\in \calB\big ) \Big| \rightarrow 0 \quad \mbox{as $n_\ell\to \infty$}.
\] 
Because the last convergence holds with probability 1 along a subsequence $\{n_\ell\}_{\ell \geq 1}$  of $\{n_j\}_{j \geq 1}$ and because $\{n_j\}_{j \geq 1}$
was an arbitrarily chosen subsequence of $\{n\}_{n \geq 1}$, the probabilistic convergence in Theorem~\ref{thm:bootstrap}
now follows.
\end{proof}
% \begin{theorem}
% Under the conditions of Theorem \ref{thm:larconsistency} and with $\bar m$ estimated as in \eqref{eqn:mest}, for $k = 1,\dots,m$ we have
% \[
% \sqrt{n}((\hat \bvec_k)_{\hat \calA_k}  - ( \bvec_k)_{\hat \calA_k}) \indist \calN(\zerovec,\Vmat_{k,k})
% \]
% as $n \to \infty$, where $\Vmat_{m,m} = \Rmat^{-1}_{\calA_m,\calA_m}$ and
% \[
% \Vmat_{k,k} = \Rmat^{-1}_{\calA_k,\calA_k}\Big[\Rmat_{\calA_k,\calA_k} + \frac{\Rmat_{\{j_{k+1}\}|\calA_k}}{(1 - s_{k+1}\Rmat_{j_{k+1},\calA_k}\Rmat_{\calA_k,\calA_k}^{-1}\svec_k)^2}\svec_k\svec_k^T\Big]\Rmat^{-1}_{\calA_k,\calA_k}
% \]
% for $k = 1,\dots,m-1$, where $\svec_k$ is the vector of signs $\svec_k=(s_1,\dots,s_k)^T$.
% \end{theorem}

% \begin{theorem}
% Under the conditions of Theorem \ref{thm:larconsistency} for $k = 1,\dots,m$ we have
% \[
% \sqrt{n}((\hat \bvec_k)_{\hat \calA_k}  - ( \bvec_k)_{\hat \calA_k}) \indist \calN\Big(\zerovec,\sigma^2\Rmat^{-1}_{\calA_k,\calA_k}\Big[\Rmat_{\calA_k,\calA_k} + \lambda_k\svec_k\svec_k^T\Big]\Rmat^{-1}_{\calA_k,\calA_k}\Big)
% \]
% as $n \to \infty$, where 
% \[
% \lambda_k \equiv \frac{\Rmat_{\{j_{k+1}\}|\calA_k}}{(1 - s_{k+1}\Rmat_{\{j_{k+1}\},\calA_k}\Rmat_{\calA_k,\calA_k}^{-1}\svec_k)^2} > 0
% \]
% for $k = 1,\dots,m-1$, $\lambda_m \equiv 0$, and  $\svec_k$ is the vector of signs $\svec_k=(s_1,\dots,s_k)^T$.
% \end{theorem}

\section{Additional data analysis plots}
\label{app:illu}

Here we present some additional plots related to the analysis of the diabetes data analyzed in Section \ref{sec:diabetes}.  Figure \ref{fig:diabetes_larinf_lar} shows the sample LAR path on the data; this figure presents the same information as Figure 3 of \cite{efron2004least}, though our step correlations and coefficients are scaled differently, and we plot the step coefficients against the step number rather than against the sum of the absolute values of the coefficients.
Figure \ref{fig:diabetes_larinf_larinf2} depicts in the left panel the estimation of $m$ via \eqref{eqn:mest}, which results in $\bar m = 5$ and in the right panel the bootstrap probability of active set membership of each variable index over the LAR steps. In the sample path the variable bmi enters the active set on step 1 in front of the variable ltg; however, in bootstrap sampling, bmi enters on step 1 about 75\% of the time with ltg entering first the other 25\% of the time. Note also that the tch variable (gray dashed line), though entering on step 7 in the sample path, is seen to enter as early as steps 3 or 4 about 5\% and 25\% of the time, respectively.  This suggests that the separation conditions (M1)-(M2) of Theorem \ref{thm:larconsistency} may not hold for $\delta$ large enough to guarantee that the sample LAR path will perfectly recover the sequence of active sets in the underlying population path. Such examples may further motivate a modification to LAR under which it can admit variables simultaneously when they are nearly tied for entrance into the active set.

\begin{figure}[ht]
\begin{center}
\includegraphics[width = 0.8\textwidth]{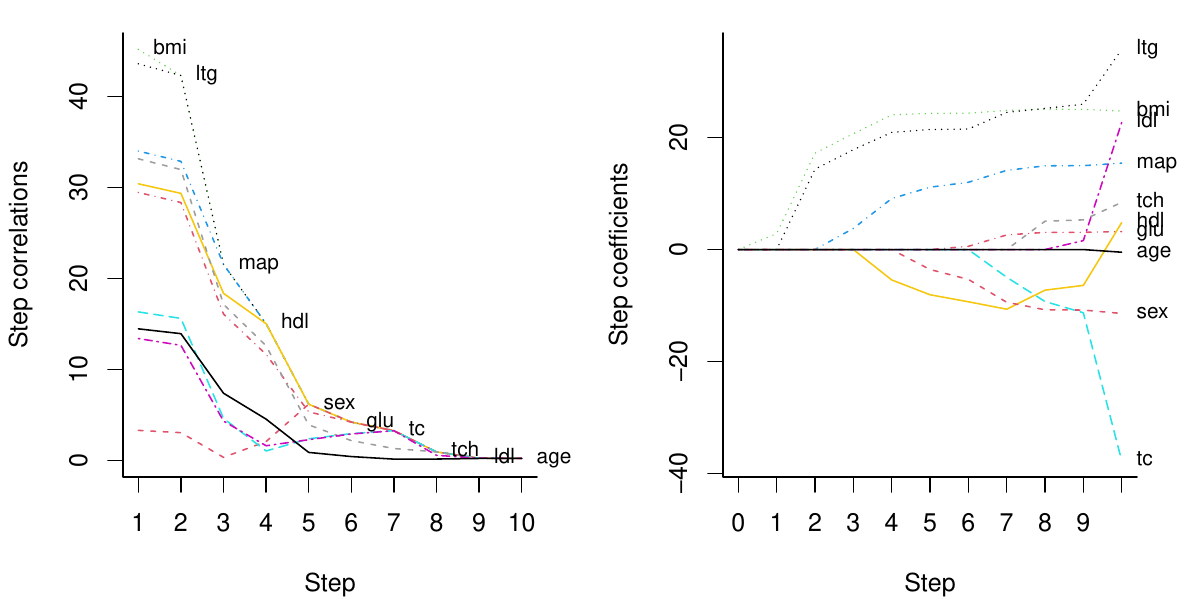}
\caption{Sample path $\LAR(\Xmat,\yvec)$ on the diabetes data described in Section \ref{sec:diabetes}.}
\label{fig:diabetes_larinf_lar}
\end{center}
\end{figure}

\begin{figure}[ht]
\begin{center}
\includegraphics[width = 0.8\textwidth]{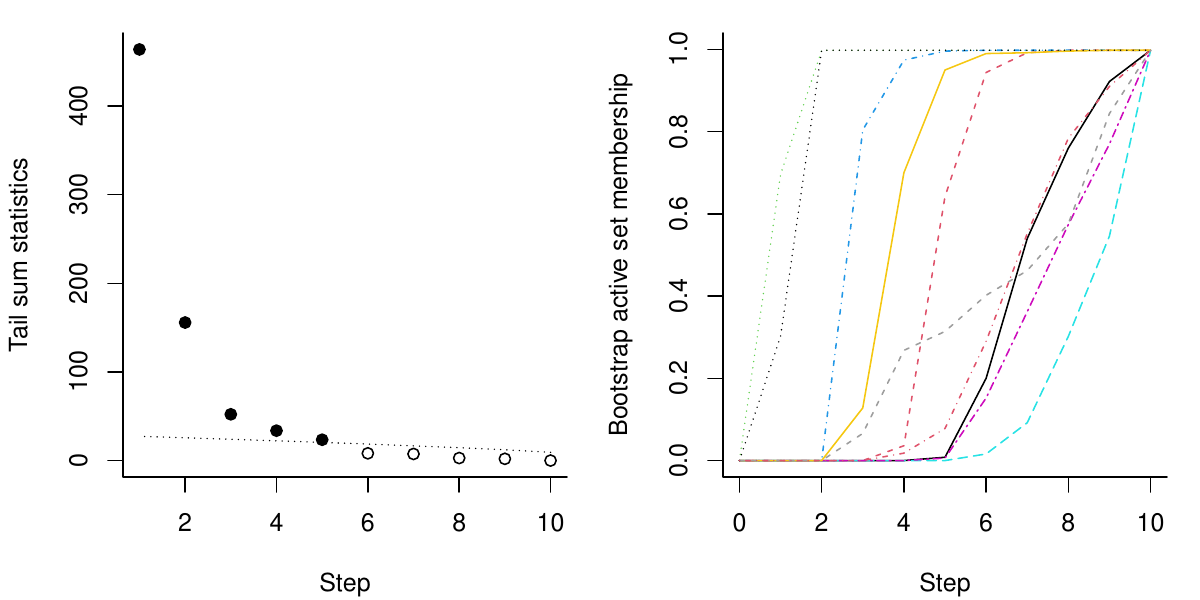}
\caption{Diagnostic plots for the diabetes data described in Section \ref{sec:diabetes}.}
\label{fig:diabetes_larinf_larinf2}
\end{center}
\end{figure}

% Note: in this sample, the section number is hard-coded in. Following
% proper LaTeX conventions, it should properly be coded as a reference:

%In this appendix we prove the following theorem from
%Section~\ref{sec:textree-generalization}:

\vskip 0.2in
\bibliography{lit.bib}

@book{RV2025,
  title={Modern multivariate
statistical learning},
  author={Ryan, Kenneth J. and Vardeman, Stephen, B.},
  year={2025},
  publisher={Textbook draft}
}

@article{lee2016exact,
  title={Exact post-selection inference, with application to the lasso},
  author={Lee, Jason D and Sun, Dennis L and Sun, Yuekai and Taylor, Jonathan E and others},
  journal={The Annals of Statistics},
  volume={44},
  number={3},
  pages={907--927},
  year={2016},
  publisher={Institute of Mathematical Statistics}
}

@book{mammen2012does,
  title={When does bootstrap work?: asymptotic results and simulations},
  author={Mammen, Enno},
  volume={77},
  year={2012},
  publisher={Springer Science \& Business Media}
}

@article{freedman1981bootstrapping,
  title={Bootstrapping regression models},
  author={Freedman, David A},
  journal={The Annals of Statistics},
  pages={1218--1228},
  year={1981},
  publisher={JSTOR}
}

@article{giurcanu2019bootstrapping,
  title={Bootstrapping LASSO-type estimators in regression models},
  author={Giurcanu, Mihai and Presnell, Brett},
  journal={Journal of Statistical Planning and Inference},
  volume={199},
  pages={114--125},
  year={2019},
  publisher={Elsevier}
}

@article{bickel1983bootstrapping,
  title={Bootstrapping regression models with many parameters},
  author={Bickel, Peter J and Freedman, David A},
  journal={Festschrift for Erich L. Lehmann},
  pages={28--48},
  year={1983}
}

@article{lai1979strong,
  title={Strong consistency of least squares estimates in multiple regression II},
  author={Lai, T L and Robbins, Herbert and Wei, C Zi},
  journal={Journal of multivariate analysis},
  volume={9},
  number={3},
  pages={343--361},
  year={1979},
  publisher={Academic Press}
}

@article{efron2004least,
  title={Least angle regression},
  author={Efron, Bradley and Hastie, Trevor and Johnstone, Iain and Tibshirani, Robert and others},
  journal={The Annals of Statistics},
  volume={32},
  number={2},
  pages={407--499},
  year={2004},
  publisher={Institute of Mathematical Statistics}
}

@article{tibshirani1996regression,
  title={Regression shrinkage and selection via the lasso},
  author={Tibshirani, Robert},
  journal={Journal of the Royal Statistical Society: Series B (Methodological)},
  volume={58},
  number={1},
  pages={267--288},
  year={1996},
  publisher={Wiley Online Library}
}

@article{chatterjee2011bootstrapping,
  title={Bootstrapping lasso estimators},
  author={Chatterjee, Arindam and Lahiri, Soumendra Nath},
  journal={Journal of the American Statistical Association},
  volume={106},
  number={494},
  pages={608--625},
  year={2011},
  publisher={Taylor \& Francis}
}

@article{lockhart2014significance,
  title={A significance test for the lasso},
  author={Lockhart, Richard and Taylor, Jonathan and Tibshirani, Ryan J and Tibshirani, Robert},
  journal={Annals of Statistics},
  volume={42},
  number={2},
  pages={413},
  year={2014},
  publisher={NIH Public Access}
}

@article{wang2021infrared,
  title={Infrared thermography for measuring elevated body temperature: clinical accuracy, calibration, and evaluation},
  author={Wang, Quanzeng and Zhou, Yangling and Ghassemi, Pejman and McBride, David and Casamento, Jon P and Pfefer, T Joshua},
  journal={Sensors},
  volume={22},
  number={1},
  pages={215},
  year={2021},
  publisher={MDPI}
}

@misc{wang2023facial,
  title={Facial and oral temperature data from a large set of human subject volunteers},
  author={Wang, Quanzeng and Zhou, Yangling and Ghassemi, Pejman and Chenna, Dwith and Chen, Michelle and Casamento, Jon and Pfefer, Joshua and Mcbride, David},
  year={2023},
  publisher={PhysioNet}
}

@article{PhysioNet,
 author = "Goldberger, A. L. and Amaral, L. A. N. and Glass, L. and
	   Hausdorff, J. M. and Ivanov, P. Ch. and Mark, R. G. and
	   Mietus, J. E. and Moody, G. B. and Peng, C.-K. and Stanley, H. E.",
 title = "{PhysioBank, PhysioToolkit, and PhysioNet}: Components of a New
	  Research Resource for Complex Physiologic Signals",
 journal = "Circulation [Online]",
 year = "2000",
 volume = "101",
 number = "23",
 pages = "e215--e220"}

@article{tibshirani2015general,
  title={A general framework for fast stagewise algorithms.},
  author={Tibshirani, Ryan J},
  journal={J. Mach. Learn. Res.},
  volume={16},
  number={1},
  pages={2543--2588},
  year={2015}
}

@article{zhao2006model,
  title={On model selection consistency of Lasso},
  author={Zhao, Peng and Yu, Bin},
  journal={The Journal of Machine Learning Research},
  volume={7},
  pages={2541--2563},
  year={2006},
  publisher={JMLR. org}
}

@article{hesterberg2008least,
author = {Tim Hesterberg and Nam Hee Choi and Lukas Meier and Chris Fraley},
title = {{Least angle and $\ell_1$ penalized regression: A review}},
volume = {2},
journal = {Statistics Surveys},
number = {none},
publisher = {Amer. Statist. Assoc., the Bernoulli Soc., the Inst. Math. Statist., and the Statist. Soc. Canada},
pages = {61 -- 93},
year = {2008}
}

@article{taylor2014post,
  title={Post-selection adaptive inference for least angle regression and the lasso},
  author={Taylor, Jonathan and Lockhart, Richard and Tibshirani, Ryan J and Tibshirani, Robert},
  journal={arXiv preprint arXiv:1401.3889},
  volume={354},
  year={2014},
  publisher={Citeseer}
}

@article{khan2007robust,
  title={Robust linear model selection based on least angle regression},
  author={Khan, Jafar A and Van Aelst, Stefan and Zamar, Ruben H},
  journal={Journal of the American Statistical Association},
  volume={102},
  number={480},
  pages={1289--1299},
  year={2007},
  publisher={Taylor \& Francis}
}

@article{hastie2007forward,
author = {Trevor Hastie and Jonathan Taylor and Robert Tibshirani and Guenther Walther},
title = {{Forward stagewise regression and the monotone lasso}},
volume = {1},
journal = {Electronic Journal of Statistics},
number = {none},
publisher = {Institute of Mathematical Statistics and Bernoulli Society},
pages = {1 -- 29},
year = {2007}
}

@article{tibshirani2013lasso,
author = {Ryan J. Tibshirani},
title = {{The lasso problem and uniqueness}},
volume = {7},
journal = {Electronic Journal of Statistics},
number = {none},
publisher = {Institute of Mathematical Statistics and Bernoulli Society},
pages = {1456 -- 1490},
year = {2013}
}

@article{su2018first,
  title={When is the first spurious variable selected by sequential regression procedures?},
  author={Su, Weijie J},
  journal={Biometrika},
  volume={105},
  number={3},
  pages={517--527},
  year={2018},
  publisher={Oxford University Press}
}

@article{g2016sequential,
  title={Sequential selection procedures and false discovery rate control},
  author={G'Sell, Max Grazier and Wager, Stefan and Chouldechova, Alexandra and Tibshirani, Robert},
  journal={Journal of the Royal Statistical Society Series B: Statistical Methodology},
  volume={78},
  number={2},
  pages={423--444},
  year={2016},
  publisher={Oxford University Press}
}

@Manual{larspackage,
    title = {lars: Least Angle Regression, Lasso and Forward Stagewise},
    author = {Trevor Hastie and Brad Efron},
    year = {2022},
    note = {R package version 1.3},
    url = {https://CRAN.R-project.org/package=lars},
  }

@article{inglot2010inequalities,
  title={Inequalities for quantiles of the chi-square distribution},
  author={Inglot, Tadeusz},
  journal={Probability and Mathematical Statistics},
  volume={30},
  number={2},
  pages={339--351},
  year={2010},
  publisher={Wroclaw University Press}
}

\end{document}